\newcommand{\mr}[1]{\mathscr{#1}}
\def\H{\mathcal{H}}
\def\V{\mathcal{V}}
\def\R{\mathbb{R}}
\def\N{\mathbb{N}}
\def\and{\qquad\text{and}\qquad}
\def\({\left(}
\def\){\right)}
\def\eps{\varepsilon}
\def\p{\partial}
\def\al{\alpha}
\def\be{\beta}
\def\wto{\rightharpoonup}
\def\la{\langle}
\def\ra{\rangle}
\def\td{{\rm d}}
\def\be{\begin{equation}}
\def\ee{\end{equation}}
\def\ben{\begin{eqnarray}}
\def\een{\end{eqnarray}}
\newcounter{cnum}
\newcounter{cnum}[chapter]
\def\C{\@ifnextchar[{\@with}{\@without}}
\def\@with[#1]{\@ifundefined{c@#1}{%
            \ifnum\thecnum<1
            \stepcounter{cnum}
            \fi
           \newcounter{#1}
           \setcounter{#1}{\thecnum}
            C_{\thecnum}
           \stepcounter{cnum}
        }{
        \ifnum\the\csname c@#1\endcsname<\thecnum
         C_{{\the\csname c@#1\endcsname}}
        \else
         C_{{\the\csname c@#1\endcsname}}
         \ifnum\thecnum<1
         \stepcounter{cnum}
         \fi
        \stepcounter{cnum}
        \fi
        }}
\def\@without{\ifnum\thecnum<1
                \stepcounter{cnum}
              \fi
              C_{\thecnum}
              \stepcounter{cnum}
        }
\newtheorem{thm}{Theorem}[section]
\newtheorem{lemma}[thm]{Lemma}
\theoremstyle{definition}
\newtheorem{Def}[thm]{Definition}
\theoremstyle{remark}
\newtheorem{remark}[thm]{Remark}
\numberwithin{equation}{section}
\begin{document}

\title[wave equation with nonlinear damping and super-cubic nonlinearity]{Well-posedness and global attractor for wave equation with nonlinear damping and super-cubic nonlinearity}

\author{Cuncai Liu}
\address{School of Mathematics and Physics, Jiangsu University of Technology, Changzhou, 213001,China}
\email{liucc@jsut.edu.cn}
\thanks{This work was supported by the NSFC(11701230,11801227,11801228,12026431,12442050), Natural Science Fund For Colleges and
  Universities in Jiangsu Province (22KJD110001), Jiangsu 333 Project, QingLan Project of Jiangsu Province and Zhongwu Young Innovative Talents Support Program.}

\author{Fengjuan Meng}
\address{School of Mathematics and Physics, Jiangsu University of Technology, Changzhou, 213001,China}
\email{fjmeng@jsut.edu.cn}
\author{Xiaoying Han}
\address{Department of Mathematics and Statistics, 221 Parker Hall, Auburn University,
  Auburn, AL 36849, USA}
\email{xzh003@auburn.edu.cn}
\author{Chang Zhang}
\address{School of Mathematics and Physics, Jiangsu University of Technology, Changzhou, 213001,China}

\email{chzhnju@126.com}
\subjclass[2020]{Primary 35B40; Secondary 35B45, 35L70}



\keywords{Wave equation; Nonlinear damping; Super-cubic nonlinearity; Space-time estimate; Global attractor}

\begin{abstract}
  This study investigates a semilinear wave equation characterized by nonlinear damping $g(u_t) $ and nonlinearity $f(u)$. First, the well-posedness of weak solutions across broader exponent ranges for $g$ and $f$ is established, by utilizing a priori space-time estimates. Moreover, the existence of a global attractor in the phase space $H^1_0(\Omega)\times L^2(\Omega)$ is obtained. Furthermore, it is proved that this global attractor is regular, implying that it is a bounded subset of $(H^2(\Omega)\cap H^1_0(\Omega))\times H^1_0(\Omega)$.
\end{abstract}

\maketitle
\setcounter{tocdepth}{1}
\tableofcontents

\numberwithin{equation}{section}

\section{Introduction}
This paper addresses the nonlinear damped wave equation governed by:
\begin{align}\label{eq1}
  \begin{cases}u_{tt}+g(u_{t})-\Delta u+f(u)=\phi, \quad \quad \qquad  (t,x)\in \mathbb{R}_{+}\times\Omega, \\
    u(0, x)=u_0(x),~~ u_{t}(0, x)=u_1(x),  \quad \,\, \,\, \,\, x\in \Omega,                     \\
    u(t, x)=0, \qquad \qquad \qquad \quad \quad \qquad \,\,\,  (t,x)\in \mathbb{R}_{+}\times \partial\Omega.
  \end{cases}
\end{align}
where $\Omega$ denotes a bounded domain in $\mathbb{R}^3$ with smooth boundary, and the external force term $\phi\in L^2(\Omega)$ remains independent of time.

To study the dynamics of this system, we impose certain conditions on the nonlinear damping  $g$ and the nonlinearity $f$. Specifically, it is assumed that
\begin{itemize}
  \item[({\bf G})] The damping function $g\in C^1(\mathbb{R})$ is strictly increasing and satisfies $g(0)=0$ and the growth condition
    \begin{equation}\label{G1}
      c_1(|s|^{m}-c_2)\le |g(s)| \le c_3(|s|^{m}+1),~~ 1<m\le5.\tag{G1}
    \end{equation}
  \item[({\bf F})] The nonlinearity $f\in C^2(\mathbb{R})$ and satisfies $f(0)=f^{\prime}(0)=0$ and the growth condition
    \begin{equation}\label{F1}
      |f^{\prime\prime}(s)|\leq c(1+|s|^{p-2}),~~ 2\le p\le \min\{5,3m\},\tag{F1}
    \end{equation}
    alongside the dissipative condition
    \begin{equation}\label{F2}
      \varliminf\limits_{|s|\rightarrow\infty}\frac{f(s)}{s}> -\lambda,\tag{F2}
    \end{equation}
    with $\lambda>0$ being less than the first eigenvalue $\lambda_1$ of $-\Delta$ with Dirichlet boundary condition on $\Omega$.
    Furthermore, we introduce a lower bound estimate on $f^\prime$, ensuring that  for any $\omega>0$ there exists a constant $K_\omega>0$ such that
    \begin{equation}\label{F3}
      f^{\prime}(s)\ge -\omega s^4 -K_\omega, ~~\forall s\in \R.\tag{F3}
    \end{equation}
\end{itemize}

It is clear that condition \eqref{F3} becomes trivial when  $p<5$, due to the stipulations of \eqref{F1}, rendering \eqref{F3} specifically restrictive only at $p=5$. Defining $F(s)$ as $F(s) = \int_0^s f(\tau) \text{d}\tau$, within the constraints of the aforementioned assumptions  we derive the ensuing inequalities:
\begin{equation}\label{fl-Flu}
  f(s)s\ge -\lambda s^2-C, ~~ -\dfrac{\lambda}{2}s^2-C\le F(s) \le C(1+|s|^{p+1}),
\end{equation}
and further
\begin{equation}\label{fs-F}
  f(s)s-F(s)=\int_0^s f^\prime(s)s\td s\ge -\omega s^6 -K_\omega s^2.
\end{equation}
These formulations underscore the relationship between the function $f$ and its integral representation $F(s)$, delineating the bounds and behavior influenced by the parameters $\lambda$, $\omega$, and $K_\omega$.

The well-posedness of problem \eqref{eq1} has been extensively discussed, with a rich body of literature addressing various parameter assumptions. A summary of some pivotal results is as follows. Lions and Strauss \cite{Lions65} demonstrated the existence of a weak solution in $\mathbb{R}^3$ for cases where $2 \leq p \leq m$, also introducing a monotonicity method to address the convergence of the nonlinear damping term. Feireisl \cite{feireisl95_nldamp} explored the well-posedness of problem \eqref{eq1} under a slightly different assumption:
\begin{equation}
  \label{G1'}
  c_1(|s|^r - c_2) \leq |g(s)| \leq c_3(1 + |s|^m)\tag{G1$^{\prime}$},
\end{equation}
with $p < \frac{6r}{r+1}$, and $1 \leq r \leq m < 5$. This assumption implies that the upper growth exponent $m$ for $g$ may not match the lower growth exponent $r$, leading to the absence of an appropriate space for the weak solution definition to guarantee the integrability of the damping term $g(u_t)$. To address this issue, Feireisl established that a weak solution can be considered as the limit of a sequence of strong solutions, thus proving the well-posedness of both weak and strong solutions within a bounded domain. Chueshov and Lasiecka \cite{chueshov04} examined the weak solution for $m<5$, $p<3$, and $r=1$, particularly when the damping parameter is large with $p=3$. In scenarios involving higher growth nonlinearity, more sophisticated estimates are necessary. For instance, in the linear damping case (i.e., $g(u_t) = \gamma u_t$), Feireisl \cite{feireisl95} confirmed well-posedness through the $L^4(0,T;L^{12}(\mathbb{R}^3))$ Strichartz estimates. However, applying the Strichartz estimate to secure the well-posedness of \eqref{eq1} is challenging due to the introduction of a nonlinear damping term. Recently, Todorova \cite{todorova15} achieved a similar space-time estimate by selecting an appropriate nonlinear test function for $m = \frac{5}{3}$. This led to the establishment of the well-posedness of the weak solution in $\mathbb{R}^3$, along with the proof of strong solution well-posedness on $\mathbb{R}^3$ for $m \geq 2$.

Regarding the long-term behavior of problem \eqref{eq1}, the foundational works in this area are notably attributed to \cite{cl84}, \cite{hale88}, and \cite{haraux87}. Raugel \cite{rau92} was instrumental in identifying the global attractor under the constraint $m_1 \leq g'(s) \leq c(1 + |s|^{2/3})$, with $m_1$ being sufficiently large. Feireisl \cite{feireisl95_nldamp} verified the existence of a global attractor for the weak solution, predicated on the condition \eqref{G1'}, where $p < 6r/(r+1)$, $1 \leq r \leq m < 5$. The work in \cite{94fei} explored the finite dimensionality of attractors for wave equations with nonlinear damping in one-dimensional scenarios. Subsequent studies by \cite{chueshov02} and \cite{chueshov04} delved into the existence and characteristics (such as structure and dimension) of global attractors in situations where $m < 5$, $p < 3$, and $r = 1$, especially with a significantly large damping parameter at $p = 3$. These studies offered a comprehensive abstract framework for analyzing the asymptotic behavior of solutions to evolutionary equations with a second-order time term, as detailed further in \cite{chueshov08}. Sun \cite{sun06} later demonstrated the existence of a global attractor for the weak solution in cases with $r = 1$, $p \leq 3$, and $m < 5$. When the conditions were set to $p < 3$, $r = m \leq 5$, Nakao \cite{nakao06} established the polynomial absorbing rate of the absorbing ball and the existence of a global attractor. With the help of a compensated compactness criterion developed in \cite{Khanmamedov06_karman} for asymptotic smoothness, Khanmamedov \cite{khanmamedov06} established the existence of the global attractor for the weak solution where $r = 1$, $p \leq 3$, and $m \leq 5$.

Regarding the regularity of the global attractor, specifically in the 2D scenario, Lasiecka \cite{la02} and Khanmamedov \cite{khanmamedov09} demonstrated the $(H^2(\Omega)\cap H^1_0(\Omega))\times H^1_0(\Omega)$-regularity of the global attractor for the weak and strong solutions of \eqref{eq1}, respectively. Chueshov and Lasiecka \cite{chueshov04} established that the global attractor is bounded in $(H^2(\Omega)\cap H^1_0(\Omega))\times H^1_0(\Omega)$ when $p < 3$ and $m < 5$. In \cite{khanmamedov10}, Khanmamedov introduced the ``finite cutting-off and decomposition method'' to examine the regularity for cases where $p \leq 3$ and $m \leq 5$. This method has proven effective in investigating the regularity of the global attractor for wave equations with different types of nonlinear damping, such as those involving displacement-dependent damping \cite{Khanmamedov10-dis}.

From the discussion above, it is evident that the well-posedness and long-term behavior of problem \eqref{eq1} are significantly influenced by the growth exponents $m$ of the damping term $g$ and  $p$ of the nonlinearity $f$. For clarity, we present these findings in Figure \ref{fig:1}, illustrating that the weak solution of problem \eqref{eq1} is well-posed in regions I and II, and at a single point ($p=5$, $m=\frac{5}{3}$). The existence of the global attractor has been investigated in regions I and II, while the regularity of the global attractor has been confirmed only in region I.

\begin{figure}[htpb!]
  \centering
  \begin{tikzpicture}
    [cube/.style={very thick,black},
      grid/.style={very thin,gray},
      axis/.style={->,black,thick}]

    \draw[axis] (0,0) -- (6,0) node[anchor=west]{$p$};
    \draw[axis] (0,0) -- (0,5.5) node[anchor=west]{$m$};
    \coordinate (A) at (1,1);

    \draw (1,0) -- (1, 0.1);
    \node[below] at (1,0){1};

    \draw (3,0) -- (3, 0.1);
    \node[below] at (3,0){3};

    \draw (5,0) -- (5, 0.1);
    \node[below] at (5,0){5};

    \draw (0,1) -- (0.1, 1);
    \node[left] at (0,1){1};

    \draw (0,5/3) -- (0.1,5/3);
    \node[left] at (0,5/3){5/3};

    \draw (0,5) -- (0.1, 5);
    \node[left] at (0,5){5};

    \draw[thick, fill=black!10] (1,5) -- (1,1) -- (3,1) -- (3,5);
    \fill[black!20, domain=3:5] (3,5) -- (3,1) -- plot(\x, {\x/(6-\x)});

    \fill[black!30, domain=3:5] plot(\x, {\x/(6-\x)}) -- (5,5/3)--(3,1);

    \draw[thick] (1,1) -- (1,5);
    \draw[thick] (1,1) -- (5,1);

    \draw[thick, dashed] (5,1) -- (5,5/3);
    \draw[thick] (1,5) -- (5,5);

    \draw[thick] (3,1) -- (5,5/3) -- (5,5);
    \draw[thick] (3,1) -- (3,5);
    \draw[thick, domain=3:5]
    plot(\x, {\x/(6-\x)});
    \fill (5,5/3) circle (2pt);

    \node[font=\fontsize{8}{8}] at (2,2.5){I};
    \node[font=\fontsize{8}{8}] at (3.7,2.5){II};
    \node[font=\fontsize{8}{8}] at (4.7,2.5){III};
    \node[font=\fontsize{8}{8}] at (3.9,4){$p < \frac{6m}{m+1}$};
    \node[font=\fontsize{8}{8}] at (4.5,2) {$p\le 3m$};
  \end{tikzpicture}
  \caption{Well-posedness region of the weak solution}
  \label{fig:1}
\end{figure}
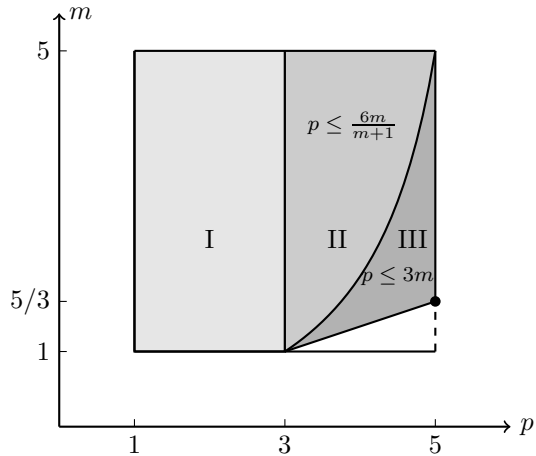

The objective of this paper is to investigate the well-posedness and long-term behavior of problem \eqref{eq1}, expanding the scope to include broader ranges of exponents for the nonlinear damping $g$ and the nonlinearity $f$. Specifically, we aim to extend the existing results concerning the well-posedness and the global attractor for the weak solution of problem \eqref{eq1} to the shaded region III presented in Figure 1. This pertains to the range where $p\le \min\{3m,5\}$ and $m\le 5$. Our approach begins with establishing a priori space-time estimates for the weak solution, employing Todorova's methodology. These estimates allow us to address the challenge posed by the inability to apply Strichartz estimates directly to wave equations with nonlinear damping, leading to the establishment of the weak solutions' continuous dependence on initial conditions. Subsequently, we demonstrate the asymptotic compactness of the semigroup generated by the weak solution using the energy method in \cite{ball04,mrw98}, which ensures the existence of a global attractor. Finally, by refining the finite cut-off and decomposition method, we identify a regular point within the global attractor that belongs to the space $(H^2(\Omega)\cap H^1_0(\Omega))\times H^1_0(\Omega)$. This discovery, coupled with the uniform boundedness of the strong solutions, implies that the global attractor is, in fact, bounded in the space $(H^2(\Omega)\cap H^1_0(\Omega))\times H^1_0(\Omega)$.

It is worth mentioning that the a priori space-time estimate of weak solutions is primarily influenced by the subdued growth of the nonlinear damping term $g$. Yet, as the growth exponent $m$ approaches 1, this estimate becomes insufficient for addressing the increased growth of the nonlinearity $f$. Consequently, a gap emerges, represented by the uncharted ``triangle region'' in Figure \ref{fig:1}. While complete resolution within this region remains elusive, we propose an alternative methodology to achieve partial insights in this work, the details of which will be explored in a forthcoming paper.

The rest of the paper is organized as follows. Section 2 introduces essential preliminaries, including notations and foundational inequalities, and establishes more inequalities to be used in the sequel. Section 3 is dedicated to developing the a priori space-time estimates and establishing the well-posedness of both weak and strong solutions. The existence of the global attractor is the focus of Section 4. In the end the regularity of the global attractor in $(H^2(\Omega)\cap H^1_0(\Omega))\times H^1_0(\Omega)$ is shown in Section 5.

\section{Preliminaries}

Regarding the phase spaces pertinent to our study, we define spaces $$\H=H^1_0(\Omega)\times L^2(\Omega), \quad \V=(H^2(\Omega)\cap H^1_0(\Omega))\times H^1_0(\Omega),$$
equipped with norms
$$\|(u,v)\|_{\H}=\left(\|\nabla u\|^2+\|v\|^2\right)^{\frac12}\text{~and~}\|(u,v)\|_{\V}=\left(\|\Delta u\|^2+\|\nabla v\|^2\right)^{\frac12}$$
respectively.

For ease of reference and simplicity, the inner product and norm in $L^2(\Omega)$ are represented by $\langle\cdot,\cdot\rangle$ and $\|\cdot\|$, respectively. Meanwhile, the norm in $L^q(\Omega)$ is denoted as $\|\cdot\|_{q}$.

Note that under the assumption ({\bf G}), the integrability of damping term $g(u_t)$ can be obtained by the $L^{m+1}([0,T]\times\Omega)$ integrability of $u_t$. Hence, the weak solution can be defined as follows.
\begin{Def}[Weak solution]
  A function $u$ satisfies
  $$u\in C([0,T];H^1_0(\Omega)),~~ u_t\in C([0,T];L^2(\Omega))\cap L^{m+1}([0,T]\times\Omega)$$
  possessing the properties $u(0)=u_0$ and $u_t(0)=u_1$ is said to be a
  weak solution of problem \eqref{eq1} on $[0,T]\times\Omega$, if and only if equation \eqref{eq1} is satisfied in the sense of distribution, \textit{i.e.},
  $$\int_{0}^{T}\int_\Omega \left(-u_t\psi_t+g(u_t)\psi+\nabla u\nabla \psi+f(u)\psi\right)\td x \td t=\int_{0}^{T}\int_\Omega \phi\psi\td x\td t$$
  for any $\psi \in C_0^{\infty}((0,T)\times \Omega)$.
\end{Def}

It is known that  $L^2(\Omega)$ possesses a complete orthonormal basis, $\{e_i\}_{i\in \mathbb{N}}$, comprised of eigenvectors of $-\Delta$ under Dirichlet boundary conditions:
$$-\Delta e_i=\lambda_ie_i,~~ i\in \mathbb{N},$$
where the sequence $\lambda_i$ is monotonically increasing and diverges to $+\infty$. Given the smoothness of $\Omega$, each $e_i$ is sufficiently smooth. For any real number $s$, we introduce a hierarchy of Hilbert spaces, which are compactly nested:
$$  H^s=D((-\Delta)^\frac{s}{2})=\left\{w=\sum_{i=1}^{\infty}a_ie_i\mid \sum_{i=1}^{\infty}\lambda_i^sa_i^2<+\infty\right\},$$
equipped with the norm $\|\cdot\|_{H^s}$ defined by
$$\|w\|_{H^s}=\|(-\Delta)^{\frac{s}{2}}w\|=\left(\sum_{i=1}^{\infty}\lambda_i^sa_i^2\right)^{\frac12}, \quad w \in H^s.$$

Throughout this paper, the symbol $C$ represents a generic constant, which is occasionally indexed for clarity. Additionally, various positive constants denoted by $C_i, i\in \mathbb{N}$, are employed to facilitate specific distinctions within analysis.

Next, we proceed to establish some key inequalities pivotal for the subsequent analysis.

\begin{lemma} Under the Assumption {\rm({\bf G})}, for any $\delta\in (0,1)$, there exists constant $c(\delta)>0$ such that
  \begin{equation} \label{g_delta}
    s^2+|s|^{m+1}\le c(\delta)g(s)s+\delta \text{~~ and ~~} |g(s)|^{(m+1)/m}\le c(\delta)g(s)s+\delta.
  \end{equation}
\end{lemma}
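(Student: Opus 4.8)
The plan is to split $\mathbb{R}$ into three regions according to the size of $|s|$ and to estimate the two inequalities on each one. The starting observation is that, since $g$ is strictly increasing with $g(0)=0$, the sign of $g(s)$ coincides with that of $s$, so that $g(s)s=|g(s)|\,|s|\ge 0$, with $g(s)s>0$ whenever $s\neq 0$; this is what makes both right-hand sides nonnegative and is used throughout.

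For large $|s|$, say $|s|\ge R_1:=\max\{1,(2c_2)^{1/m}\}$, I would use that $s^2\le|s|^{m+1}$ (because $m>1$), hence $s^2+|s|^{m+1}\le 2|s|^{m+1}$, together with the lower bound in \eqref{G1}, which gives $|g(s)|\ge c_1(|s|^m-c_2)\ge\tfrac{c_1}{2}|s|^m$ and thus $g(s)s\ge\tfrac{c_1}{2}|s|^{m+1}$; combining these yields $s^2+|s|^{m+1}\le\tfrac{4}{c_1}\,g(s)s$ on this range. For the second inequality, the upper bound in \eqref{G1} gives $|g(s)|\le c_3(|s|^m+1)\le 2c_3|s|^m$ for $|s|\ge 1$, whence $|g(s)|^{1/m}\le(2c_3)^{1/m}|s|$ and, multiplying by $|g(s)|$, $|g(s)|^{(m+1)/m}\le(2c_3)^{1/m}\,g(s)s$.

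Near the origin I would argue by continuity: since $g$ is continuous with $g(0)=0$ and $m+1>2$, both $s^2+|s|^{m+1}$ and $|g(s)|^{(m+1)/m}$ tend to $0$ as $s\to0$, so there is $\eta=\eta(\delta)\in(0,R_1)$ such that each of them is $\le\delta$ for $|s|\le\eta$; on $\{|s|\le\eta\}$ both inequalities then hold trivially because $g(s)s\ge0$. On the remaining compact set $\{\eta\le|s|\le R_1\}$, the maps $s\mapsto g(s)s$, $s\mapsto s^2+|s|^{m+1}$ and $s\mapsto|g(s)|^{(m+1)/m}$ are continuous and $g(s)s$ is bounded below there by a positive constant (it is positive at every point of that compact set), so both ratios are bounded by some $M=M(\delta)>0$, giving $s^2+|s|^{m+1}\le M\,g(s)s$ and $|g(s)|^{(m+1)/m}\le M\,g(s)s$ on that set. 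Setting $c(\delta):=\max\{4/c_1,(2c_3)^{1/m},M(\delta)\}$ and patching the three regions then yields both inequalities for all $s\in\mathbb{R}$.

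I do not expect a genuine obstacle here; the only delicate point is the behaviour at $s=0$. When $g'(0)=0$ the product $g(s)s$ may vanish strictly faster than $s^2$, so the ratio $(s^2+|s|^{m+1})/(g(s)s)$ need not remain bounded as $s\to0$ — this is precisely why the statement carries the additive slack $\delta$, and the continuity argument above absorbs a small neighbourhood of the origin into it. Away from the origin everything reduces to the elementary two-sided bound \eqref{G1} and, for the intermediate region, to compactness.
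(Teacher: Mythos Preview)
Your proof is correct and follows essentially the same strategy as the paper: absorb a neighbourhood of the origin into the additive slack $\delta$ and bound the ratios $(s^2+|s|^{m+1})/(g(s)s)$ and $|g(s)|^{(m+1)/m}/(g(s)s)$ away from the origin using positivity of $g(s)s$ together with the asymptotic control from \eqref{G1}. The only cosmetic difference is that you split into three regions (small, intermediate compact, large) with explicit constants on the large piece, whereas the paper uses a two-region split (with cutoff $\delta/2$ for the first inequality and a threshold $\eta$ determined by $|g(\pm\eta)|=\delta^{m/(m+1)}$ for the second) and handles the entire set $\{|s|\ge\text{threshold}\}$ in one stroke via an infimum; the underlying ideas are identical.
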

\begin{proof}
  First, since $g\in C^1(\mathbb{R})$ is monotonically increasing and $g(0)=0$, then $g(s)s\ge 0$. It follows directly from the inequality \eqref{G1} that
  $$\liminf_{s\to \infty}\frac{g(s)s}{|s|^{m+1}}\ge c_1.$$
  Set
  $$C_1(\delta): =\inf_{|s|\ge \delta/2}\left(\frac{g(s)s}{|s|^{m+1}}+\frac{g(s)s}{s^2}\right).$$
  Then $C_1(\delta) > 0$ and satisfies
  \begin{equation}\label{new:1}
    s^2+|s|^{m+1}\le C_1(\delta)g(s)s,~~ \text{for all  }|s|\ge \delta/2.
  \end{equation}
  On the other hand, when $|s|\le \delta/2<1/2$,
  \begin{equation}\label{new:2} s^2+|s|^{m+1}\le 2s^2\le 2|s|\le \delta.
  \end{equation}
  The first inequality in \eqref{g_delta} is an immediate consequence of \eqref{new:1} and \eqref{new:2}.

  Next, note that because $g(s)$ is increasing,  $g(s)s$ is decreasing on $(-\infty, 0)$ and increasing on $[0, +\infty)$. Therefore there exists a unique $s^*>0$ such that $$\min \{g(s^*),|g(-s^*)|\}=\delta^{m/(m+1)},$$  which implies that
  \begin{equation}\label{new:3}
    |g(s)|^{(m+1)/m}\le \delta, ~~ \text{for all  } |s|\le s^*.
  \end{equation}
  On the other hand, setting
  $$C_2(\delta)=\inf_{|s|\ge s^*}\frac{g(s)s}{|g(s)|^{(m+1)/m}},$$
  then $C_2(\delta) > 0$ and
  \begin{equation}\label{new:4}
    |g(s)|^{(m+1)/m }\le C_2(\delta)g(s)s,~~ \text{for all  }|s|\ge s^*.
  \end{equation}

  The second inequality in \eqref{g_delta} follows immediately from \eqref{new:3} and \eqref{new:4}, and the proof is complete.
\end{proof}

The Lemma \ref{inter_ineq} below gives an interpolation inequality.
\begin{lemma} \label{inter_ineq}
  Suppose that parameters $\lambda$, $\mu$, $k$ and $q$ satisfy
  \begin{itemize}
    \item[(i)]  $\lambda, \mu>0$, $k\ge 1$ and $1\le q\le 3$;
    \item[(ii)] $\dfrac{\lambda}{3k+3}+\dfrac{\mu}{m+1}\le 1$;
    \item[(iii)] $\dfrac{\lambda-(q-1)(k-1)}{k+5}+\dfrac{\mu}{m+1}\le 1$.
  \end{itemize}
  Then given any small constant $\theta>0$, there exists a constant $C = C(\theta)>0$ such that the  inequality
  \begin{equation}\label{interp}
    \left(\int_\Omega |\varphi|^{\lambda} |\psi|^{\mu}\td x\right)^{1/q}\le \theta+\theta \|\varphi\|^{k+1}_{3k+3}+C \left(1+\|\varphi\|^{\lambda(m+1)/\mu}_{6}\right)\|\psi\|^{m+1}_{m+1}
  \end{equation}
  holds for all $\varphi\in L^{3k+3}(\Omega)$ and $\psi\in L^{m+1}(\Omega)$.
\end{lemma}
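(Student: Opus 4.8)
The plan is to prove the interpolation inequality \eqref{interp} by a sequence of elementary reductions: first pass to a pointwise splitting of the integrand via H\"older's inequality, then absorb one factor into the top-order Lebesgue norm $\|\vp\|_{3k+3}$ and another into the damping norm $\|\psi\|_{m+1}$, using the Sobolev embedding $H^1_0(\Omega)\cto L^6(\Omega)$ to handle the residual factor of $\vp$ in $L^6$. The two structural hypotheses (ii) and (iii) are precisely the constraints that make the relevant H\"older exponents admissible, so the bookkeeping of exponents is where the conditions get used.

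Concretely, first I would dispose of the trivial region: on the set $\{|\vp|\le 1\}$ the integrand is bounded by $|\psi|^\mu$ times a constant, which after raising to the $1/q$ power and applying H\"older (since $\mu/(m+1)\le 1$ by (ii)) is controlled by $C\|\psi\|_{m+1}^{\mu/q}\le \theta + C_\theta\|\psi\|_{m+1}^{m+1}$ using Young's inequality (note $\mu/q \le \mu \le m+1$ is not automatic, so one instead writes $\|\psi\|_{m+1}^{\mu/q}\le |\Omega|^{\alpha}\|\psi\|_{m+1}^{\text{something}\le m+1}$ or simply Young's inequality with the right exponents on the product form). So the main work is on $\{|\vp|\ge 1\}$, where I would write
\begin{equation*}
  |\vp|^\lambda|\psi|^\mu = \bigl(|\vp|^{a}\bigr)\bigl(|\vp|^{b}|\psi|^\mu\bigr),\qquad a+b=\lambda,
\end{equation*}
and split $b = b_1 + b_2$ further, choosing the split so that $|\vp|^{a}$ pairs (via H\"older in $x$) against $\|\vp\|_{3k+3}$, $|\vp|^{b_1}|\psi|^\mu$ pairs against $\|\vp\|_6^{\cdots}\|\psi\|_{m+1}^{m+1}$, and $|\vp|^{b_2}$ is the leftover absorbed into $\|\vp\|_6$. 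The exponent $k+5$ appearing in (iii) signals that on $\{|\vp|\ge1\}$ one uses $|\vp|^{q-1}\le |\vp|^{(q-1)k}$ or an interpolation between $L^{3k+3}$ and $L^6$ giving an effective power $k+5 = (3k+3)\cdot\theta' + 6\cdot(1-\theta')$-type exponent; condition (iii) is the resulting H\"older-sum $\le 1$ constraint after the $q$-th root, while (ii) is the same constraint with $q=1$ (the $\{|\vp|\ge1\}$-free or endpoint case). After the H\"older splitting, a double application of Young's inequality $xy\le \theta x^{r}+C_\theta y^{r'}$ (first to pull out $\theta\|\vp\|_{3k+3}^{k+1}$, then to separate $\|\vp\|_6$ from $\|\psi\|_{m+1}$, matching the stated exponent $\lambda(m+1)/\mu$ on $\|\vp\|_6$) yields \eqref{interp}.

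The main obstacle is choosing the H\"older exponents in the three-factor splitting so that all three of the following hold simultaneously: the power of $\vp$ in the first factor equals $k+1$ after taking the $1/q$-th root and before Young (so that it matches $\|\vp\|_{3k+3}^{k+1}$); the power of $\psi$ in the second factor, after the $1/q$-th root and after Young's inequality, is exactly $m+1$ (matching $\|\psi\|_{m+1}^{m+1}$, which forces a specific relation between $\mu$, $q$ and the H\"older weights); and the residual power of $\vp$ measured in $L^6$ works out to the stated exponent $\lambda(m+1)/\mu$. Verifying that such a choice exists is a linear-algebra/feasibility check whose solvability is exactly equivalent to conditions (ii) and (iii) (with (i) guaranteeing positivity of the exponents and $1\le q\le 3$ ensuring the $L^6$ and $L^{3k+3}$ interpolation has a valid interior parameter). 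Once the admissible exponents are exhibited, the rest is routine H\"older plus Young and poses no difficulty; I would organize the write-up by first treating the case $q=1$ to isolate the role of (ii), then the general $q\in(1,3]$ case using (iii), and finally recombining the two regions $\{|\vp|<1\}$ and $\{|\vp|\ge1\}$.
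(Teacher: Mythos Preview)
Your overall strategy---H\"older to separate $\psi$ into $\|\psi\|_{m+1}$, interpolate the remaining $\vp$-power between $L^{3k+3}$ and $L^6$, then Young twice---is the right one, and is essentially what the paper does. But you have overcomplicated the execution and left the crucial exponent verification undone.

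First, the domain splitting $\{|\vp|\le 1\}\cup\{|\vp|\ge 1\}$ is unnecessary. The paper proceeds globally: apply H\"older once with conjugate exponents $\frac{m+1}{m+1-\mu}$ and $\frac{m+1}{\mu}$ to get
\[
  \int_\Omega |\vp|^\lambda|\psi|^\mu\,\td x \le \Bigl(\int_\Omega |\vp|^{\lambda(m+1)/(m+1-\mu)}\,\td x\Bigr)^{(m+1-\mu)/(m+1)}\|\psi\|_{m+1}^\mu.
\]
Condition (ii) says exactly that $\lambda(m+1)/(m+1-\mu)\le 3k+3$, so the $\vp$-integral interpolates between $L^{3k+3}$ and $L^6$ with weight $\al=\max\{0,(\tfrac16-\tfrac{m+1-\mu}{\lambda(m+1)})/(\tfrac16-\tfrac{1}{3k+3})\}$. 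Raise everything to $1/q$, apply Young once to peel off $\theta\|\vp\|_{3k+3}^{\al\lambda(m+1)/(q(m+1)-\mu)}$, and then the whole content of condition (iii) is the single-line computation
\[
  \frac{\al\lambda(m+1)}{q(m+1)-\mu}=\max\Bigl\{0,\;\frac{(\lambda-6)(m+1)+6\mu}{q(m+1)-\mu}\cdot\frac{k+1}{k-1}\Bigr\}\le k+1,
\]
which is algebraically equivalent to (iii). A second Young then gives the stated $\theta+\theta\|\vp\|_{3k+3}^{k+1}$.

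Second, your guess that ``$k+5$ signals $|\vp|^{q-1}\le|\vp|^{(q-1)k}$ on $\{|\vp|\ge1\}$'' is not how the exponent arises: it comes purely from the interpolation denominator $\tfrac16-\tfrac{1}{3k+3}=\tfrac{k-1}{6(k+1)}$ combined with the Young exponent $q(m+1)-\mu$, with no pointwise monotonicity trick involved. Also, your worry that $\mu/q\le m+1$ ``is not automatic'' is misplaced: condition (ii) with $\lambda>0$ forces $\mu<m+1$, and $q\ge1$ gives $\mu/q<m+1$ immediately.

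In short: drop the domain decomposition and the three-factor splitting, do the two sequential H\"older steps as above, and actually carry out the one-line verification that (iii) bounds the $L^{3k+3}$-exponent by $k+1$. That is the entire proof.
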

\begin{proof}
  First, by virtue of H\"older's inequality, we have
  \begin{equation}\label{uut}
    \int_\Omega |\varphi|^\lambda |\psi|^\mu\td x \le \left(\int_\Omega |\varphi|^{\frac{\lambda(m+1)}{m+1-\mu}}\td x\right)^{\frac{m+1-\mu}{m+1}}\|\psi\|^\mu_{m+1}.
  \end{equation}
  Noticing that $\dfrac{\lambda(m+1)}{m+1-\mu} \le 3k+3$ due to the condition (ii), and applying H\"older's inequality to the first term on the right hand side of inequality \eqref{uut} gives
  \begin{equation}\label{ieq_alpha}
    \int_\Omega |\varphi|^{\frac{\lambda(m+1)}{m+1-\mu}}\td x\le C_1 \|\varphi\|^{\frac{\alpha\lambda(m+1)}{m+1-\mu}}_{3k+3}\|\varphi\|^{\frac{(1-\alpha)\lambda(m+1)}{m+1-\mu}}_{6},
  \end{equation}
  where $\alpha=\max\left\{0, \left(\dfrac16-\dfrac{m+1-\mu}{\lambda(m+1)}\right)/\left(\dfrac16-\dfrac{1}{3k+3}\right)\right\}$.

  Next, substituting \eqref{ieq_alpha} into \eqref{uut}, then applying Young's inequality, one gets
  \begin{equation}\label{2.4}
    \begin{aligned}
      \left(\int_\Omega |\varphi|^\lambda |\psi|^\mu\td x\right)^{1/q} & \le C_2 \|\varphi\|^{\alpha\lambda/q}_{3k+3}\|\varphi\|^{(1-\alpha)\lambda/q}_{6}\|\psi\|^{\mu/q}_{m+1}                                              \\
                                                                       & \le \theta \|\varphi\|^{\frac{\alpha \lambda (m+1)}{q(m+1)-\mu}}_{3k+3}+C(\theta) \|\varphi\|^{(1-\alpha)\lambda (m+1)/\mu}_{6}\|\psi\|^{m+1}_{m+1}.
    \end{aligned}
  \end{equation}
  Further,  since condition (iii) is equivalent to $(\lambda-6)(m+1)+6\mu \le (k-1)(q(m+1)-\mu)$, then
  $$\frac{\alpha\lambda (m+1)}{q(m+1)-\mu}=\max\left\{0,\frac{(\lambda-6)(m+1)+6\mu}{q(m+1)-\mu}\frac{k+1}{k-1}\right\}\le k+1.$$
  Applying Young's inequality again to the terms on the right side of \eqref{2.4} gives
  $$ \theta \|\varphi\|^{\frac{\alpha\lambda (m+1)}{q(m+1)-\mu}}_{3k+3} \le \theta +  \theta \|\varphi\|^{k+1}_{3k+3}\and \|\varphi\|^{(1-\alpha)\lambda (m+1)/\mu}_{6}\le 1+\|\varphi\|^{\lambda (m+1)/\mu}_{6},$$
  that implies the desired assertion \eqref{interp} alongside \eqref{2.4}. The proof is complete.
\end{proof}

We also need the following Gronwall-type lemma, the proof of which can be found in \cite[Lemma 2.2]{grasselli04}, for instance.

\begin{lemma}[Gronwall's lemma]\label{gronwall}
  Let $\Phi(t)$ be an absolutely continuous function on $[0,\infty)$. Assume that there exists some $\al>0$ such that the differential inequality
  $$\frac{\td}{\td t}\Phi(t) +2\al \Phi(t)\le h_1(t)\Phi(t)+h_2(t)$$
  holds for almost every $t\in [0,\infty)$, where $h_1(t)$ and $h_2(t)$ are functions on $[0,\infty)$ satisfying
  $$\int_\tau^t |h_1(s)|\td s\le \al (t-\tau)+a, ~~ \sup_{t>0}\int_t^{t+1}|h_2(s)|\td s\le b$$
  for some $a, b\ge 0$. Then there exists $\mu=\mu(h_1)\ge 1$ such that
  $$\Phi(t)\le \mu |\Phi(0)|e^{-\al t}+b\mu e^{2\al}/(e^{\al}-1), \qquad \forall \,\, t\in [0,\infty).$$
\end{lemma}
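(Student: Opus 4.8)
The plan is to run the classical integrating-factor argument, arranged so that the two integral hypotheses on $h_1$ and $h_2$ enter naturally. First I would rewrite the differential inequality as
$$\frac{\td}{\td t}\Phi(t)\le\bigl(h_1(t)-2\al\bigr)\Phi(t)+h_2(t)\quad\text{for a.e. }t\ge0,$$
and introduce the weight $E(t):=\exp\!\bigl(\int_0^t(2\al-h_1(s))\,\td s\bigr)$, which is positive and locally absolutely continuous since the hypothesis on $h_1$ forces $h_1\in L^1_{\mathrm{loc}}[0,\infty)$. Because $\Phi$ is absolutely continuous by assumption, the product $E\Phi$ is absolutely continuous too, and
$$\frac{\td}{\td t}\bigl(E(t)\Phi(t)\bigr)=E(t)\Bigl(\tfrac{\td}{\td t}\Phi(t)-(h_1(t)-2\al)\Phi(t)\Bigr)\le E(t)h_2(t)$$
for a.e. $t$. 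Integrating over $[0,t]$ and dividing by $E(t)>0$ I would obtain
$$\Phi(t)\le\frac{\Phi(0)}{E(t)}+\int_0^t\frac{E(s)}{E(t)}\,h_2(s)\,\td s,\qquad\text{where }\ \frac{E(s)}{E(t)}=\exp\!\Bigl(\int_s^t(h_1(\tau)-2\al)\,\td\tau\Bigr).$$

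Next I would bound the exponential factors using the hypothesis on $h_1$: since $\int_s^t h_1(\tau)\,\td\tau\le\int_s^t|h_1(\tau)|\,\td\tau\le\al(t-s)+a$, we get $E(s)/E(t)\le e^{a}e^{-\al(t-s)}$ for $0\le s\le t$, and in particular $1/E(t)\le e^{a}e^{-\al t}$. For the first term, if $\Phi(0)\ge0$ this gives $\Phi(0)/E(t)\le e^{a}|\Phi(0)|e^{-\al t}$, while if $\Phi(0)<0$ the term is nonpositive and hence again below $e^{a}|\Phi(0)|e^{-\al t}$; this is what forces $|\Phi(0)|$ into the conclusion, and it fixes $\mu:=e^{a}$, which depends only on $h_1$ through $a$ and satisfies $\mu\ge1$.

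For the second term I would estimate $\int_0^t e^{-\al(t-s)}|h_2(s)|\,\td s$ by covering $[0,t]$ with the unit intervals $I_k:=[t-k-1,t-k]\cap[0,t]$, $k=0,1,2,\dots$. On $I_k$ one has $t-s\ge k$, so $e^{-\al(t-s)}\le e^{-\al k}$, while $\int_{I_k}|h_2(s)|\,\td s\le b$ by the uniform local bound $\sup_{t>0}\int_t^{t+1}|h_2(s)|\,\td s\le b$. Summing the resulting geometric series,
$$\int_0^t e^{-\al(t-s)}|h_2(s)|\,\td s\le\sum_{k\ge0}b\,e^{-\al k}=\frac{b}{1-e^{-\al}}=\frac{b\,e^{\al}}{e^{\al}-1},$$
so the second term is at most $e^{a}\,b\,e^{\al}/(e^{\al}-1)\le\mu\,b\,e^{2\al}/(e^{\al}-1)=\rho$ (using $e^{\al}\le e^{2\al}$). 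Adding the two estimates gives $\Phi(t)\le\mu|\Phi(0)|e^{-\al t}+\rho$ as claimed.

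I expect no serious obstacle here: the argument is essentially routine, and the only points calling for a little care are the verification that $E\Phi$ is absolutely continuous (so the fundamental theorem of calculus applies to the almost-everywhere inequality), the sign bookkeeping for the $\Phi(0)$ term, and the somewhat fussy unit-interval decomposition converting the uniform local integrability of $h_2$ into a convergent geometric sum. Accordingly, in the paper itself one may simply cite \cite[Lemma 2.2]{grasselli04} as the text already does.
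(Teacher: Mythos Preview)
Your argument is correct and is the standard integrating-factor proof of this Gronwall-type estimate; note that the paper does not supply its own proof but merely cites \cite[Lemma~2.2]{grasselli04}, which contains essentially the computation you wrote out. Your closing remark already acknowledges this, so there is nothing to compare.
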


\section{Well-posedness of solutions}
This section is dedicated to the well-posedness of solutions for the problem \eqref{eq1}. The process begins with an a priori space-time estimate of the weak solution, which is essential for proving the existence and uniqueness of such solutions. This estimate, as expressed in Proposition \ref{priori} below, enables the establishment of the weak solution's uniqueness. To facilitate the construction of the weak solution and to prove the regularity of the global attractor, we then focus on establishing the existence of strong solutions, beginning with a recall of their definition. Finally, the existence of weak solutions is derived by the approximation of these strong solutions.

\subsection{The a priori estimate of weak solution}

In this subsection, we establish a priori space-time estimates for the weak solutions to \eqref{eq1}, which are crucial for proving the existence and uniqueness of these solutions.

\begin{lemma}[The a priori estimate of weak solutions]\label{priori}
  Let Assumptions {\rm({\bf G})} and {\rm({\bf F})} hold, and $\phi\in L^2(\Omega)$. Then any weak solution of the problem \eqref{eq1} satisfies the following space-time estimate
  \begin{equation}\label{space-time}
    \int_0^T \|u\|^{k+1}_{3k+3}\td t\le C(R^7+R^{6}T),
  \end{equation}
  where $k=\min\{5/m,3m-2\}$, $R=\sup_{0\le s \le T}(\|u(s)\|_6+\|u_t(s)\|)+\int_0^T\|u_t\|^{m+1}_{m+1}\td t+\|\phi\|+1$, and the constant $C$ is independent of $u$ and $T$.
\end{lemma}
\begin{proof}
  The conclusion is trivial in the case of $k=1$, so we will assume $k>1$  for the remainder of the proof. To obtain the space-time estimate, inspired by Todorova \cite{todorova15}, we will construct a nonlinear test function.

  Given any $\eps\in (0,1)$, let $M_\eps(s):=\dfrac{|s|^{k-1}}{1+\eps|s|^{k-1}}s$. Then
  $$M_\eps^\prime(s)=\dfrac{k+\eps|s|^{k-1}}{(1+\eps|s|^{k-1})^2}|s|^{k-1},$$
  and the following estimates hold:
  \begin{equation}\label{H_uni}
    |M_\eps(s)|\le \min\left\{|s|^k, \frac{|s|}{\eps}\right\}, \quad  0\le M_\eps^\prime(s)\le k \min \left\{|s|^{k-1},\frac{1}{\eps}\right\}.
  \end{equation}
  This implies that $M_\eps(u)\in L^\infty(0,T;H^1_0(\Omega))$, which allows us to multiply the equation \eqref{eq1} by $M_\eps(u)$, and then integrate over $[0,T]\times \Omega$. The resulting equality reads
  \begin{equation}\label{eqh}
    \int_0^{T}\int_\Omega\left(\la M_\eps(u),u_{tt}\ra_{H^{1}_0(\Omega)}+M_\eps^\prime(u)|\nabla u|^2+M_\eps(u)(g(u_t)+f(u)-\phi)\right)\td x\td t=0,
  \end{equation}
  where $\la \cdot,\cdot \ra_{H^{1}_0(\Omega)}$ is the duality product in $H^{1}_0(\Omega)$. Approximating the function $u$ by smooth ones and arguing in a standard way (see \cite{temam97}), we conclude that
  \begin{equation}\label{eqh:new}
    \begin{aligned}
        & \int_0^{T}\int_\Omega M_\eps^\prime(u)|\nabla u|^2 \td x \td t                                                \\
      = & \int_0^{T}\int_\Omega \big[ M_\eps^\prime(u)|u_t|^2 \td x\td t - M_\eps(u)(g(u_t)+f(u)-\phi) \big] \td x\td t \\
        & +  \int_\Omega \left( M_\eps(u(0))u_{t}(0) - M_\eps(u(T))u_{t}(T) \right) \td x.
    \end{aligned}
  \end{equation}
  To derive the space-time estimate, we estimate each term on both sides of \eqref{eqh:new}. For simplicity of notations, throughout the proof set  $$U_\eps: =\dfrac{|u|}{({1+\eps|u|^{k-1}})^{\frac1{k+1}}}.$$

  First, notice that since $k > 1$,
  $$M_\eps^\prime(u) \geq \frac{|u|^{k-1}}{1+\eps|u|^{k-1}},$$
  and thus
  \begin{equation*}
    M_\eps^\prime(u)|\nabla u|^2\ge \frac{|u|^{k-1}}{1+\eps|u|^{k-1}}|\nabla u|^2=|\nabla m_\eps(|u|)|^2,
  \end{equation*}
  where
  \begin{equation*}
    \begin{aligned}
      m_\eps(|u|) & =\int_0^{|u|}\frac{s^{\frac{k-1}2}}{({1+\eps s^{k-1}})^{\frac12}}\td s  \ge \frac1{({1+\eps|u|^{k-1}})^{\frac12}}\int_0^{|u|}s^{\frac{k-1}2}\td s \\&=\frac{2|u|^{\frac{k+1}2}}{(k+1)({1+\eps|u|^{k-1}})^{\frac12}}
      =\frac{2}{k+1} U_\eps^{\frac{k+1}2}.
    \end{aligned}
  \end{equation*}
  It  then follows by the Sobolev embedding inequality that
  \begin{equation}\label{hd}
    \int_\Omega M_\eps^\prime(u)|\nabla u|^2\td x\ge \|\nabla m_\eps(|u|)\|^2\ge C \|m_\eps(|u|)\|^{2}_{6}\ge C \|U_\eps\|^{k+1}_{3k+3}.
  \end{equation}

  Next, recall that $k=\min\{5/m,3m-2\}$ implies $k \le 3$. Then by \eqref{H_uni} we have
  \begin{equation}\label{hut}
    \left\vert\int_\Omega M_\eps(u)u_{t}\td x\right\vert \le \|M_\eps(u)\|\cdot\|u_t\|\le \|\,|u|^k\|\cdot\|u_t\|\le C \|u\|_{6}^{k}\|u_{t}\|\le C R^4,
  \end{equation}
  and similarly,
  \begin{equation}\label{hphi}
    \left|\int_\Omega M_\eps(u)\phi \td x\right| \le \|M_\eps(u)\|\cdot\|\phi\|\le \|\,|u|^k\|\cdot\|\phi\|\le C \|u\|_{6}^{k}\|\phi\|\le C R^4.
  \end{equation}

  The first term  and  the term with nonlinear damping on the right hand side of \eqref{eqh:new} can be estimated in a similar manner. More precisely,  notice that
  \begin{eqnarray*}
    M_\eps^\prime(u) &=& |U_\eps|^{k-1} \frac{k + \eps |u|^{k-1}}{({1+\eps|u|^{k-1}})^{\frac{k+3}{k+1}}} \le \frac{k}{({1+\eps|u|^{k-1}})^{\frac2{k+1}}} |U_\eps|^{k-1}\le k |U_\eps|^{k-1}, \\
    |M_\eps(u)| &=& \frac{1}{({1+\eps|u|^{k-1}})^{\frac1{k+1}}}|U_\eps|^k\le |U_\eps|^k .
  \end{eqnarray*}
  It follows directly that
  \begin{equation}\label{H'ut2}
    \int_\Omega M_\eps^\prime(u)|u_{t}|^2\td x\le k\int_\Omega |U_\eps|^{k-1}|u_{t}|^2\td x,
  \end{equation}
  and also due to condition \eqref{G1} that
  \begin{equation}\label{Hg}
    \left\vert\int_\Omega M_\eps(u)g(u_t) \td x\right\vert\le C \int_\Omega |U_\eps|^k (1+|u_t|^m) \td x.
  \end{equation}
  Taking into account the fact
  \begin{equation*}
    \frac{k}{k+5}+\frac{m}{m+1}\le 1 \text{~~  and ~~} \frac{k-1}{k+5}+\frac{2}{m+1} \le 1,
  \end{equation*}
  and applying Lemma \ref{inter_ineq} with exponents $\lambda=k-1>0$, $\mu=2$, and $\lambda=k$, $\mu=m$  to \eqref{H'ut2} and \eqref{Hg} respectively, we obtain that the estimates
  \begin{eqnarray}\label{hp}
    \int_\Omega M_\eps^\prime(u)|u_t|^2\td x & \le& \theta+\theta\|U_\eps\|^{k+1}_{3k+3}+C_\theta (1+\|U_\eps\|^{\frac{(k-1)(m+1)}{2}}_{6} )\|u_t\|^{m+1}_{m+1}\notag\\
    & \le& \theta+\theta\|U_\eps\|^{k+1}_{3k+3}+C_\theta R^6\|u_t\|^{m+1}_{m+1},\\
    \left\vert\int_\Omega M_\eps(u)g(u_t) \td x\right\vert & \le& \theta+\theta \|U_\eps\|^{k+1}_{3k+3}+C_\theta(1+\|U_\eps\|^{\frac{k(m+1)}{m}}_{6})\|u_t\|^{m+1}_{m+1} +C\|U_\eps\|^{k}_{6}\notag \\
    & \le&\theta+\theta \|U_\eps\|^{k+1}_{3k+3}+C_\theta R^{6}\|u_t\|^{m+1}_{m+1} +CR^4,
  \end{eqnarray}
  hold for any given $\theta>0$.


  It remains to estimate the term involving nonlinearity $f$ on the right hand side of \eqref{eqh:new}. In fact, due to \eqref{fl-Flu} we have
  \begin{equation}
    \begin{aligned}
      M_\eps(u)f(u) & =\dfrac{|u|^{k-1}}{1+\eps|u|^{k-1}}f(u)u\ge \dfrac{|u|^{k-1}}{1+\eps|u|^{k-1}}(-\lambda u^2-C) \\
                    & \ge -\lambda |u|^{k+1}-C|u|^{k-1}\ge -\lambda_1 |u|^{k+1}-C,
    \end{aligned}
  \end{equation}
  thus
  \begin{equation}\label{hf}
    \int_\Omega M_\eps(u)f(u)\td x  \ge - \int_\Omega (\lambda_1|u|^{k+1}+C)\td x
    \ge -C\|u\|_6^{k+1}-C\ge -CR^4.
  \end{equation}

  Inserting \eqref{hd} -- \eqref{hphi}, and \eqref{hp} -- \eqref{hf} in \eqref{eqh:new} and setting $\theta$ small enough results in
  \begin{equation}\label{ste}
    \int_0^T \|U_\eps\|^{k+1}_{3k+3}\td t\le C\left( R^4T+R^4+CR^{6}\left(T+\int_0^T\|u_t\|^{m+1}_{m+1}\td t\right)\right)\le C(R^7+R^{6}T),
  \end{equation}
  where the constant $C$ is independent of $\eps$ and $u$.

  Finally, since $U_\eps\to u$ a.e. in $[0,T]\times \Omega$ as $\eps\to 0^{+}$, the desired assertion follows directly from Fatou's Lemma. The proof is complete.
\end{proof}

\begin{remark}\label{rem_fu}
  By H\"{o}lder's inequality, we have
  \begin{equation*}
    \int_\Omega |u|^{p(m+1)/m}\td x\le  \|u\|^{\alpha p(m+1)/m}_{3k+3}\|u\|^{(1-\alpha) p(m+1)/m}_{6},
  \end{equation*}
  where
  $$\alpha=\left(\frac16-\frac{m}{p(m+1)}\right)/\left(\frac16-\frac1{3k+3}\right).$$
  Since
  $$\alpha\frac{ p(m+1)}{m}=\frac{(p(m+1)-6m)(k+1)}{m(k-1)}\le k+1,$$
  for any weak solution $u(t)$ of problem \eqref{eq1}, by virtue of \eqref{F1}, we can deduce that
  $$f(u)\in L^{(m+1)/m}((0,T)\times\Omega).$$
  This fact is critical for uniqueness of the weak solution to be discussed in the next subsection.
\end{remark}

\subsection{Uniqueness of the weak solution}
With the a priori estimate \eqref{space-time}, we can now establish the uniqueness of the weak solution.

\begin{thm}[Uniqueness of the weak solution]\label{unique}
  Let Assumptions {\rm({\bf G})} and {\rm({\bf F})} hold, and $\phi\in L^2(\Omega)$. Suppose that $u(t, x)$ and $v(t, x)$ are two weak solutions of \eqref{eq1} on $[0, T]\times\Omega$, and set  $w(t, x)=u(t, x)-v(t, x)$ for $(t, x) \in [0, T]\times\Omega$. Then the following continuous dependence estimate on initial datum
  \begin{equation}\label{ieq_cd}
    \|(w(t),w_t(t))\|_{\H}\le C_{u,v}\|(w(0),w_t(0))\|_{\H}
  \end{equation}
  holds, where and $C_{u,v}$ depends on the $\H$ norm of the solutions $u$ and $v$.
\end{thm}

\begin{proof}
  First,  the difference $w(t, x)$ of the solutions $u(t, x)$ and $v(t, x)$ of \eqref{eq1} satisfies the equation
  \begin{align}\label{eqw}
    \begin{cases}w_{tt}+g(u_{t})-g(v_t)-\Delta w=f(v)-f(u), \,\,\,\text{~~in~~} \mathbb{R}_{+}\times\Omega, \\
      w(x,0)=u_0-v_0,~~ w_{t}(x,0)=u_1-v_1,  \quad \text{~~in~~} \Omega.
    \end{cases}
  \end{align}
  Due to  Lemma \ref{priori}, we have $u$, $v\in L^{k+1}(0,T;L^{3k+3}(\Omega))$. As stated in Remark \ref{rem_fu}, the space-time integrability in $L^{k+1}(0,T;L^{3k+3}(\Omega))$ ensures that $f(u), f(v)\in L^{(m+1)/m}((0,T)\times \Omega)$. This level of integrability is crucial because it allows us to handle the nonlinear terms $f(u)$ and $f(v)$ appropriately. Thus, we can take inner product of \eqref{eqw} with $w_t$, leading to
  \begin{equation}\label{eq23}
    \frac12 \frac{\td }{\td t}\left(\|w_t\|^2+ \|\nabla w\|^2\right)+\int_\Omega (g(u_{t})-g(v_t))w_t\td x=-\int_\Omega (f(u)-f(v))w_t\td x.
  \end{equation}
  Because $g$ is monotonically increasing,  $g^\prime(s)\ge 0$ and hence
  \begin{equation}\label{gu-gv}
    \int_\Omega (g(u_{t})-g(v_t))w_t\td x\ge 0.
  \end{equation}

  Next, rewrite the second term on the right hand side of \eqref{eq23} as
  \begin{equation}\label{fu-fv}
    \begin{aligned}
      -   \int_\Omega (f(u)-f(v))w_t\td x = & -\int_\Omega \int_0^1 f^{\prime}(\tau u+(1-\tau)v)\td \tau~ww_t\td x                             \\
      =                                     & - \frac12 \frac{\td }{\td t}\int_\Omega \int_0^1 f^{\prime}(\tau u+(1-\tau)v)\td \tau~w^2\td x   \\
                                            & + \int_\Omega\int_0^1 f^{\prime\prime}(\tau u+(1-\tau)v)(\tau u_t+(1-\tau)v_t)\td \tau~w^2\td x.
    \end{aligned}
  \end{equation}
  Notice that due to Assumption (F3), for any $\omega>0$, there exists $K_\omega > 0$ such that
  \begin{equation}
    \begin{aligned}
      \int_\Omega \int_0^1 f^{\prime}(\tau u+(1-\tau)v)\td \tau~w^2\td x & \ge -\int_\Omega (\omega u^4+\omega v^4+K_\omega)w^2\td x  \\
                                                                         & \ge -\omega(\|u\|_6^4+\|v\|_6^4)\|w\|_6^2-K_\omega\|w\|^2.
    \end{aligned}
  \end{equation}
  Setting
  $$E_\omega(t):=\frac12\left(\|w_t\|^2+\|\nabla w\|^2+\int_\Omega \int_0^1 f^{\prime}(\tau u+(1-\tau)v)\td \tau~w^2\td x+K_\omega\|w\|^2\right),$$
  and choosing $\omega>0$ small enough gives
  \begin{equation}
    E_\omega(t)\ge \frac14\left(\|w_t\|^2+\|\nabla w\|^2\right).
  \end{equation}
  On the other hand,  due to the growth condition \eqref{F1} and H\"older's inequality,  the second term  in the equality \eqref{fu-fv} satisfies
  \begin{equation}
    \begin{aligned}
          & \int_\Omega\int_0^1 f^{\prime\prime}(\tau u+(1-\tau)v)(\tau u_t+(1-\tau)v_t)\td \tau~w^2\td x                \\
      \le & C \int_\Omega (1+|u|^{p-2}+|v|^{p-2})(|u_t|+|v_t|)w^2\td x                                                   \\
      \le & C \left(\int_\Omega (1+|u|^{3(p-2)/2}+|v|^{3(p-2)/2})(|u_t|^{3/2}+|v_t|^{3/2})\td x\right)^{2/3}\|w\|^2_{6}  \\
      \le & C \left(\int_\Omega (1+|u|^{3(p-2)/2}+|v|^{3(p-2)/2})(|u_t|^{3/2}+|v_t|^{3/2})\td x\right)^{2/3} E_\omega(t) \\
      :=  & I(t)E_\omega(t).
    \end{aligned}
  \end{equation}
  Notice that the exponents $\lambda=3(p-2)/2$, $\mu=3/2$ and $q=3/2$ satisfy conditions of Lemma \ref{inter_ineq}, we have
  \begin{equation}\label{It}
    \begin{aligned}
      I(t)= & C\left(\int_\Omega (1+|u|^{3(p-2)/2}+|v|^{3(p-2)/2})(|u_t|^{3/2}+|v_t|^{3/2})\td x\right)^{2/3}                                                                     \\
      \le   & C_2\left( 1+\|u\|^{k+1}_{3k+3}+\|v\|^{k+1}_{3k+3} +\left(1+\|u\|^{3(m+1)}_{6}+\|v\|^{3(m+1)}_{6}\right)\left(\|u_t\|^{m+1}_{m+1}+\|v_t\|^{m+1}_{m+1}\right)\right).
    \end{aligned}
  \end{equation}
  Clearly, $I(\cdot)\in L^{1}(0,T)$.

  Substituting \eqref{gu-gv}-\eqref{It} into \eqref{eq23}, we obtain that the energy functional $E_\omega(t)$ satisfies
  \begin{equation}
    \begin{aligned}
      \frac{\td }{\td t}E_\omega(t) & \le \int_\Omega\int_0^1 f^{\prime\prime}(\tau u+(1-\tau)v)(\tau u_t+(1-\tau)v_t)\td \tau~w^2\td x+K_\omega\int_\Omega w w_t\td x \\
                                    & \le I(t)E_\omega(t)+C K_\omega E_\omega(t).
    \end{aligned}
  \end{equation}
  Then according to Gronwall's Lemma, we can deduce inequality \eqref{ieq_cd}.
\end{proof}

\subsection{Well-posedness of the strong solution}
To facilitate the construction of weak solutions and demonstrate the regularity of the global attractor in Section \ref{sec:attr}, we will establish the existence of strong solutions for system \eqref{eq1} in this subsection. Specifically, we will show that system \eqref{eq1} has a unique strong solution with nonlinear exponents residing in Region III, as illustrated in Figure \ref{fig:1}. To that end, we begin by recalling the definition of a strong solution as follows:
\begin{Def}[Strong solution \cite{chueshov08}]
  A function $u\in C([0,T];\V)\cap C^1([0,T];H^1_0(\Omega))$ possessing the properties $u(0)=u_0$ and $u_t(0)=u_1$ is said to be strong a solution to problem \eqref{eq1} on $[0,T]\times\Omega$, if and only if
  \begin{itemize}
    \item $u_t\in L^1(a,b;H^1_0(\Omega))$ and $u_{tt}\in L^1(a,b;L^2(\Omega))$ for any $0<a<b<T$;
    \item $-\Delta u+g(u_t)\in L^2(\Omega)$ for almost all $t\in [0,T]$;
    \item equation \eqref{eq1} is satisfied for almost all $t\in [0,T]$ and $x\in \Omega$.
  \end{itemize}
\end{Def}

\begin{thm}[Strong solution in Region III]\label{strong}
  Assume that $\phi\in L^2(\Omega)$ and let Assumptions {\rm({\bf G})} and {\rm({\bf F})} hold. Then for every $(u_0,u_1) \in \V$ and for any $T>0$, there exists a unique global strong solution $u(t)$ of equation \eqref{eq1} that satisfies the following energy estimate
  \begin{equation*}\label{energy_estimate3}
    \| (u(t),u_t(t))\|_{\V}\le \Gamma_T(\| (u_0,u_1)\|_{\V}+\|\phi\|),
  \end{equation*}
  where $\Gamma_T(\cdot)$ is a continuous increasing function depends on $T$.
\end{thm}
\begin{proof}
  The existence will be demonstrated using the Faedo-Galerkin method in five parts. In Part (i) the existence of solutions to a finite-dimensional approximating system is proved. In Part (ii) a space-time estimate for the Galerkin approximate solution is established. In Part (iii) a strong energy estimate in $\V$ is derived. In Part (iv), an $L^2(\Omega)$ estimate of $u^n_{tt}$ is established, and finally in Part (v) the existence of a strong solution is obained as the limit of solutions to the finite-dimensional approximating system.

    {(i) \it Solutions to the approximate problem}. Let $P_n$ be the orthogonal projection in $L^2(\Omega)$ onto the span of $\{e_1,e_2,...,e_n\}$ and define $Q_n := I-P_n$. Consider the following finite-dimensional approximate problem:
  \begin{equation}\label{eq_fdap}
    \begin{cases}
      u^n_{tt}+P_n g(u^n_t)-\Delta u^n+P_nf(u^n)=\phi_n=P_n \phi, \\
      u^n(0)=P_nu_0,u^n_t(0)=P_nu_1,
    \end{cases}
  \end{equation}
  which is, in fact,  a system of ordinary differential equations. Given that $g\in C^1(\R)$ and $f\in C^2(\R)$, by Picard's Existence Theorem, the system \eqref{eq_fdap} has a local classical solution  that can be expressed as $u^n(t)=\sum_{i=1}^{n}a_{in}(t)e_i.$

  Next, define the energy functional $E$ by
  $$E(u^n) :=\frac12\|u^n_t\|^2+\frac12\|\nabla u^n\|^2+\int_\Omega F(u^n)\td x-\int_\Omega \phi u^n\td x.$$
  Then multiplying \eqref{eq_fdap} by $u^n_t(t)$ and integrating over $\Omega$ gives
  \begin{equation*}\label{eq2.7}
    \frac{\td}{\td t}E(u^n(t))+\int_\Omega g(u^n_t) u^n_t\td x=0,
  \end{equation*}
  which can be integrated  from $0$ to $t$ to get
  \begin{equation}\label{eq2.8}
    E(u^n(t))+\int_0^t \int_\Omega g(u^n_t) u^n_t\td x\td s=E(u^n(0)).
  \end{equation}
  Notice that the growth condition \eqref{F1} and dissipative condition \eqref{F2} imply that
  $$C_1(\|u^n_t(t)\|^2+\|\nabla u^n(t)\|^2-C_2)\le  E(u^n(t))\le C_2\left(\|u^n_t(t)\|^2+\|\nabla u^n(t)\|^2+1\right)^{p/2}.$$
  This, along with \eqref{eq2.8} then lead to the energy estimate
  \begin{equation*}\label{eq2.9}
    \|u^n_t(t)\|^2+\|\nabla u^n(t)\|^2+\int_0^t\int_\Omega g(u^n_t) u^n_t\td x \td s \le C_3(E(u^n(0))+1),
  \end{equation*}
  which ensures that the local solution can be extended to any finite time interval.

  Furthermore, combining $|s|^{m+1}\le Cg(s)s$ for $|s|\ge 1$ with $|s|^{m+1}\le s^2$ for $|s|\le 1$ implies that $|s|^{m+1}\le Cg(s)s+s^2$, which yields
  \begin{equation}\label{u_t^m+1}
    \int_{0}^t\int_\Omega |u^n_t|^{m+1}\td x \td s\le  \int_{0}^t\int_\Omega \left( Cg(u^n_t) u^n_t+|u^n_t|^2\right)\td x \td s\le C(1+t)(E(u^n(0))+1).
  \end{equation}

  {(ii) \it Space-time estimate.}  In this part we establish an estimate similar to \eqref{space-time} for the Galerkin approximate solution. The proof is similar to that of Lemma \ref{space-time}, but there are some differences because the product $\la P_nf(u_n),P_n \psi(u^n) \ \ra $ does not usually coincide with $\la f(u_n), \psi(u^n)\ra$ for a nonlinear function $\psi$.

  Multiplying the approximate equation \eqref{eq_fdap} by $-\Delta u^n_t$ and integrating over $\Omega$. This yields
  \begin{equation}\label{Edu}
    \frac{\td }{\td t}E_{\V}(u^n(t))+\int_\Omega g^{\prime}(u^n_t)|\nabla u^n_t|^2\td x+\int_\Omega f^{\prime}(u^n)\nabla u^n\nabla u^n_t\td x=0,
  \end{equation}
  where $$E_{\V}(u^n(t)) :=\frac12\|\nabla u^n_t(t)\|^2+\frac12\|\Delta u^n(t)\|^2+\la \phi, \Delta u^n(t)\ra.$$

  For the nonlinearity term in \eqref{Edu}, we observe that
  \begin{equation}
    \left|\int_\Omega f_\eps^\prime(u^n)\nabla u^n\nabla u^n_t\td x\right| \le C\(\|\nabla u^n_t(t)\|^2+\|\Delta u^n(t)\|^2\)^{3}\le C\(E_{\V}(u^n(t))+\|\phi\|^2\)^3.
  \end{equation}
  Substituting into \eqref{Edu}, and taking into account $g^\prime(s) \ge0$ implies that
  \begin{equation}
    \frac{\td }{\td t}\(E_{\V}(u^n(t))+\|\phi\|^2\)\le C\(E_{\V}(u^n(t))+\|\phi\|^2\)^3.
  \end{equation}
  Integrating on $[0,t]$, one has
  \begin{equation}
    \(E_{\V}(u^n(t))+\|\phi\|^2\)^2\le \frac{\(E_{\V}(u^n(0))+\|\phi\|^2\)^2}{1-Ct\(E_{\V}(u^n(0))+\|\phi\|^2\)^2},
  \end{equation}
  Notice that $E_{\V}(u^n(0))$ is uniformly bounded, thus we assume that for some $t_1\in (0, T]$,
  \begin{equation*}
    \sup_n\sup_{0\le s \le t_1}\|(u^n(s), u^n_t(s))\|_{\V} <+\infty,
  \end{equation*}
  and set $S=\sup_n\sup_{0\le s \le t_1}\|(u^n(s), u^n_t(s))\|_{\V}+1$.

  Since the case $k=1$ is trivial, so we will focus on $k>1$, which implies that $m<5$.

  Define $M(s): = |s|^{k-1} s$. Multiplying the equation \eqref{eq_fdap} by the nonlinear test function $P_n M(u^n)$ and integrating over $[0, t_1]\times \Omega$, we obtain:
  \begin{equation}\label{Meq}
    \int_{0}^{t_1}\int_\Omega\left(M(u^n)u^n_{tt}+M^{\prime}(u^n)|\nabla u^n|^2+P_nM(u^n)(P_ng(u^n_t)+P_nf(u^n)-\phi_n)\right) \td x\td t=0.
  \end{equation}
  Noticing  that
  \begin{equation*}
    \int_\Omega P_nM(u^n)P_ng(u^n_t)\td x=\int_\Omega P_nM(u^n)g(u^n_t)\td x,
  \end{equation*}
  we have
  \begin{equation}\label{Mg}
    \begin{aligned}
      \int_\Omega (M(u^n)g(u^n_t)-P_nM(u^n)P_ng(u^n_t))\td x & = \int_\Omega Q_nM(u^n) g(u^n_t)\td x                \\
                                                             & \le \|Q_nM(u^n)\|_{H^{\al}}\|g(u^n_t)\|_{H^{-\al}} ,
    \end{aligned}
  \end{equation}
  where $\al=(m-1)/4$ and $Q_n = I - P_n$.

  Similar to the proof of Lemma \ref{space-time} define
  $$R: =\sup_n\left\{\sup_{0\le s\le T}(\|u^n(s)\|_6+\|u^n_t(s)\|)+\int_0^T\|u^n_t\|^{m+1}_{m+1}\td t\right\}+\|\phi\|+1.$$
  Then, according to the definitions of the norm of $H^{\al}$ and projection $Q_n$, we have
  \begin{equation}\label{new:7}
    \|Q_nM(u^n)\|_{H^{\al}}\le \lambda_n^{\frac{\al-1}2} \|Q_nM(u^n)\|_{H^1_0(\Omega)}\le C\lambda_n^{\frac{\al-1}2} \|M(u^n)\|_{H^1_0(\Omega)}\le C\lambda_n^{\frac{\al-1}2} S^{k-1}R,
  \end{equation}
  and  by fractional Sobolve embedding, the following holds
  \begin{equation}\label{new:8}
    \|g(u^n_t)\|_{H^{-\al}}\le C\|g(u^n_t)\|_{\frac{12}{m+5}}\le C(1+\|u_t\|_6^m)\le CS^m.
  \end{equation}
  Applying the inequalities \eqref{new:7} and \eqref{new:8} to the right side \eqref{Mg} results in
  \begin{equation}\label{new:9}
    \int_\Omega (M(u^n)g(u^n_t)-P_nM(u^n)P_ng(u^n_t))\td x\le C\lambda_n^{\frac{\al-1}2} S^{m+k-1}R\le C_4\lambda_n^{\frac{\al-1}2} S^{7}R.
  \end{equation}
  Following similar arguments it can be deduced that
  \begin{equation}\label{new:10}
    \begin{aligned}
          & \int_\Omega (M(u^n)f(u^n)-P_nM(u^n)P_nf(u^n))\td x
      =    \int_\Omega Q_nM(u^n)f(u^n)\td x                                                                                                                                                \\
      \le & \|Q_nM(u)\|_{H^{\al}}\|f(u^n)\|_{H^{-\al}}\le C\lambda_n^{\frac{\al-1}2} S^{k-1}R\|f(u^n)\|_{\frac{12}{m+5}}\le C\lambda_n^{\frac{\al-1}2} S^{k-1}R\|u^n\|^p_{\frac{12p}{m+5}} \\
      \le & C\lambda_n^{\frac{\al-1}2} S^{k-1}RS^p=C\lambda_n^{\frac{\al-1}2} S^{p+k-1}R \le C_4\lambda_n^{\frac{\al-1}2} S^{7}R.
    \end{aligned}
  \end{equation}
  Since $\lambda_n\to +\infty$ as $n\to \infty$,   there exists $N=N(S)\in \mathbb{N}$ such that $C_4\lambda_n^{\frac{\al-1}2} S^{7}R\le1$ for any $n\ge N(S)$. Hence it follows from \eqref{new:9} and \eqref{new:10} that
  \begin{eqnarray*}
    \int_\Omega P_nM(u^n)P_ng(u^n_t)\td x &\ge& \int_\Omega M(u^n)g(u^n_t)\td x-1, \quad n\ge N(S),\\
    \int_\Omega P_nM(u^n)P_nf(u^n)\td x &\ge& \int_\Omega M(u^n)f(u^n)\td x-1,\quad n\ge N(S),
  \end{eqnarray*}
  which can be applied to \eqref{Meq} to obtain that for any $n\ge N(S)$
  \begin{equation*}\label{}
    \int_{0}^{t_1}\int_\Omega\left(M(u^n)u^n_{tt}+M^{\prime}(u^n)|\nabla u^n|^2+M(u^n)(g(u^n_t)+f(u^n)-\phi_n)\right) \td x\td t\le 2t_1.
  \end{equation*}
  Following the procedure in the proof of Lemma \ref{space-time} with minor variations in technical details, we obtain the space-time estimate
  \begin{equation}\label{space_time_app}
    \begin{aligned}
      \int_{0}^{t_1} \|u^n\|^{k+1}_{3k+3}\td t & \le C\left(R^4+C R^{6}\left(t_1+\int_{0}^{t_1}\|u^n_t\|^{m+1}_{m+1}\td t\right)\right)+Ct_1 \\
                                               & \le C(R^7+R^{6}T), \qquad \forall n\ge N(S).
    \end{aligned}
  \end{equation}

  {(iii) \it Energy estimates in $\V$.}    To establish the strong energy estimate in $\V$, it is natural to multiply the approximate equation \eqref{eq_fdap} by $-\Delta u^n_t$ and integrate over $\Omega$. This yields
  \begin{equation}
    \frac{\td }{\td t}E_{\V}(u^n(t))+\int_\Omega g^{\prime}(u^n_t)|\nabla u^n_t|^2\td x+\int_\Omega f^{\prime}(u^n)\nabla u^n\nabla u^n_t\td x=0,
  \end{equation}
  where $$E_{\V}(u^n(t)) :=\frac12\|\nabla u^n_t(t)\|^2+\frac12\|\Delta u^n(t)\|^2+\la \phi, \Delta u^n(t)\ra.$$

  For the nonlinearity term in \eqref{Edu}, we observe that
  \begin{equation}\label{eq2.35}
    \int_\Omega f_\eps^\prime(u^n)\nabla u^n\nabla u^n_t\td x = \frac12 \frac{\td }{\td t}\int_\Omega f_\eps^\prime(u^n)|\nabla u^n|^2\td x-\int_\Omega f_\eps^{\prime\prime}(u^n)|\nabla u^n|^2 u^n_t\td x.
  \end{equation}

  Similar to the proof of Theorem \ref{unique}, for some small $\omega>0$, the energy functional defined by
  $$E_{\V,\omega}(u^n(t))=E_\V(u^n(t))+\frac12\int_\Omega f_\eps^\prime(u^n)|\nabla u^n|^2\td x+K_\omega\int_\Omega |\nabla u^n|^2\td x+2\|\phi\|^2$$
  satisfies
  \begin{equation}\label{Ev_omega}
    E_{\V,\omega}(u^n(t))\ge \frac14\|\nabla u^n_t(t)\|^2+\frac14\|\Delta u^n(t)\|^2.
  \end{equation}
  Due to H\"older's inequality, the second term on the right hand side of \eqref{eq2.35} satisfies
  \begin{equation} \label{new:11}
    \begin{aligned}
      \left|\int_\Omega f_\eps^{\prime\prime}(u^n)|\nabla u^n|^2 u^n_t\td x\right|
       & \le C \int_\Omega (1+|u^n|^{p-2})|\nabla u^n|^2~|u^n_t|\td x                                                       \\
       & \le C \left(\|u^n_t\|+\left(\int_\Omega |u^n|^{(3p-6)/2}|u^n_t|^{3/2}\td x\right)^{2/3}\right)\|\nabla u^n\|^2_{6} \\
       & \le C \left(\|u^n_t\|+\left(\int_\Omega |u^n|^{(3p-6)/2}|u^n_t|^{3/2}\td x\right)^{2/3}\right)\|\Delta u^n\|^2.
    \end{aligned}
  \end{equation}
  Notice that the exponents $\lambda=(3p-6)/2$, $\mu=3/2$ and $q=3/2$ satisfy
  $$\frac{3p-6}{6(k+1)}+\frac{3}{2(m+1)}=1-\frac{(k-1)(m-1)}{2(k+1)(m+1)} \le 1,$$
  and
  $$\frac{3p-6-(k-1)}{2(k+5)}+\frac{3}{2(m+1)}=1-\frac{3}{2}\left(\frac{m}{m+1}-\frac{p}{k+5}\right) \le 1. $$
  Lemma \ref{inter_ineq} can then be applied to the term $\left(\int_\Omega |u^n|^{(3p-6)/2}|u^n_t|^{3/2}\td x\right)^{2/3}$ in \eqref{new:11} to give
  \begin{equation*}
    \left(\int_\Omega |u^n|^{(3p-6)/2}|u^n_t|^{3/2}\td x\right)^{2/3}\le 1+\|u^n\|^{k+1}_{3k+3}+C\left(1+\|u^n\|^{(p-2)(m+1)}_{6}\right)\|u^n_t\|^{m+1}_{m+1}.
  \end{equation*}
  Consequently it follows by \eqref{new:11} that
  \begin{equation}\label{fpp_du_ut}
    \begin{aligned}
          & \int_\Omega f_\eps^{\prime\prime}(u^n)|\nabla u^n|^2 u^n_t\td x                                                                \\
      \le & \left(\|u^n_t\|+1+\|u^n\|^{k+1}_{3k+3}+ C_5\left(1+\|u^n\|^{(p-2)(m+1)}_{6}\right)\|u^n_t\|^{m+1}_{m+1}\right)\|\Delta u^n\|^2 \\
      =   & H(u^n(t))E_{\V,\omega}(u^n(t)),
    \end{aligned}
  \end{equation}
  where
  \begin{equation*}
    H(u^n(t)) : =\|u^n_t\|+1+\|u^n\|^{k+1}_{3k+3}+C_5\left(1+\|u^n\|^{(p-2)(m+1)}_{6}\right)\|u^n_t\|^{m+1}_{m+1}.
  \end{equation*}
  Substituting \eqref{eq2.35} and \eqref{fpp_du_ut} into \eqref{Edu} and taking into account $g^{\prime}(s)\ge 0$ we obtain
  \begin{equation*}
    \begin{aligned}
      \frac{\td }{\td t}E_{\V,\omega}(u^n(t)) & \le H(u^n(t))E_{\V,\omega}(u^n(t))+K_\omega\la \nabla u^n_t, \nabla u^n\ra \\
                                              & \le C_6(H(u^n(t))+K_\omega)E_{\V,\omega}(u^n(t)),
    \end{aligned}
  \end{equation*}
  from which it followed by Gronwall's Lemma that
  \begin{equation*}
    E_{\V,\omega}(u^n(t))\le E_{\V,\omega}(u^n(0))\exp\left(\int_{0}^{t_1} C_6(H(u^n(t))+K_\omega)\td t\right).
  \end{equation*}
  Moreover, from  the space-time estimate \eqref{space_time_app} we have
  \begin{equation*}
    \begin{aligned}
      \int_{0}^{t_1} H(u^n(t))\td t & \le  (R+1)T+C(R^6+R^5T)+C(1+R^{(p-2)(m+1)})R \\
                                    & \le C_7(R^{19}+R^5T), ~~\forall n\ge N(S),
    \end{aligned}
  \end{equation*}
  and hence
  \begin{equation}\label{Ev_omega_R}
    E_{\V,\omega}(u^n(t))\le E_{\V,\omega}(u^n(0))e^{C_7(R^{19}+R^5T)+C_6K_\omega T}, ~~\text{for any  }t\le t_1 \text{~and~} n\ge N(S).
  \end{equation}

  On the other hand, due to the condition \eqref{F2} and Sobolev embedding inequality,
  \begin{equation*}
    \int_\Omega f_\eps^\prime(u^n)|\nabla u^n|^2\td x\le C(1+\|u^n\|_6^4)\|\nabla u^n\|_6^2\le C_8(1+\|\nabla u^n\|^4)\|\Delta u^n\|^2,
  \end{equation*}
  and thus
  \begin{equation*}
    E_{\V,\omega}(u^n(0))\le C_9\left(\|\nabla u_1\|^2+(1+\|\nabla u_0\|^4)\|\Delta u_0\|^2+\|\phi\|^2\right).
  \end{equation*}
  Now we define
  $$\bar{S}^2=4C_9\left(\|\nabla u_1\|^2+(1+\|\nabla u_0\|^4)\|\Delta u_0\|^2+\|\phi\|^2\right) e^{C_7(R^{19}+R^5T)+C_6K_\omega T}. $$
  Then it follows from \eqref{Ev_omega} and \eqref{Ev_omega_R} that
  \begin{equation*}
    \|(u^n(t), u^n_t(t))\|_{\V}\le \bar{S}, ~~\text{for any  }t\le t_1 \text{~and~} n\ge N(S).
  \end{equation*}
  Then
  \begin{equation}
    \limsup_{n\to \infty}\sup_{0\le s \le t_1}\|(u^n(s), u^n_t(s))\|_{\V}\le \bar{S}.
  \end{equation}
  From which we can show that
  \begin{equation}\label{new:12}
    \limsup_{n\to \infty}\sup_{0\le t\le T}\|(u^n(t), u^n_t(t))\|_{\V}\le  \bar{S}.
  \end{equation}
  In fact, suppose (for contradiction) that there exists a $T_0\in (0,T)$ such that
  \begin{equation}\label{new:13}
    \limsup_{n\to \infty}\lim_{t\to T_0^{-}}\|(u^n(t), u^n_t(t))\|_{\V}=+\infty \text{~~and~}\limsup_{n\to \infty}\sup_{0\le t\le T_0-\eps}\|(u^n(t), u^n_t(t))\|_{\V}<\infty
  \end{equation}
  for any small $\eps>0$. Then by Part (ii) and \eqref{new:12}, we have
  $$\limsup_{n\to \infty}\sup_{0\le t\le T_0-\eps}\|(u^n(t), u^n_t(t))\|_{\V}\le \bar{S},~~\text{for any  } \varepsilon>0,$$
  which contracts with \eqref{new:13} after passing the limit $\eps\to 0^{+}$.

  {(iv) \it $L^2(\Omega)$ estimate of $u^n_{tt}$.}  First, note that it follows directly from the equation in \eqref{eq_fdap} that
  \begin{equation*}\label{}
    \begin{aligned}
          & \|u^n_{tt}(t)\|^2+\|P_n g(u^n_t(t))\|^2+2\int_\Omega u^n_{tt}(t)g(u^n_t(t))\td x \\
      \le & 3\|\Delta u^n(t)\|^2+3\|P_nf(u^n(t))\|^2+3\|\phi_n\|^2.
    \end{aligned}
  \end{equation*}
  Since, by \eqref{new:12}, $\|\Delta u^n(t)\|\le S$ and $\|P_nf(u^n(t))\|\le S^5$, integrating above inequality over $[0, T]$ gives
  \begin{equation*}
    \begin{aligned}
      \int_0^T \int_\Omega |u^n_{tt}(t)|^2\td x \td t
       & \le  -2\int_0^T\int_\Omega u^n_{tt}(t)g(u^n_t(t))\td x\td t+\Gamma_T(\| (u_0,u_1)\|_{\V}+\|\phi\|)                   \\
       & =  2\int_\Omega \left(G(P_nu_1)-G(u^n_t(T))\right)\td x+\Gamma_T(\| (u_0,u_1)\|_{\V}+\|\phi\|),~~ \forall n\ge N(S),
    \end{aligned}
  \end{equation*}
  where $G(s)=\int_0^s g(\tau)\td \tau$ is the primitive function of $g$, and $\Gamma_T$ is a continuous increasing function of $T$. In addition, due to
  the growth condition \eqref{G1}, one gets $0\le G(s)\le C(1+|s|^{6})$, and thus
  \begin{equation*}\label{untt}
    \int_0^T \int_\Omega |u^n_{tt}(t)|^2\td x \td t \le \Gamma_T(\| (u_0,u_1)\|_{\V}+\|\phi\|),\text{~~for any } n\ge N(S).
  \end{equation*}

  {(v) \it Passing to the limit.}    As $n\to \infty$, we can without loss of generality,  assume the following convergence:
  \begin{equation*}
    \begin{aligned}
       & u^n\to u \text{~~ weakly star in~~}L^{\infty}(0,T;H^2(\Omega)\cap H^1_0(\Omega)),                                             \\
       & u^n_t \to u_t \text{~~ weakly star in~~} L^{\infty}(0,T;H^1_0(\Omega)),                                                       \\
       & u^n_{tt} \to u_{tt} \text{~~ weakly in~~} L^{2}((0,T)\times\Omega),                                                           \\
       & (g^{\prime}(u^n_t))^{1/2}\partial_{x_i} u^n_t\to \xi_i \text{~~ weakly in~~} L^{2}([0,T]\times\Omega) \text{~for } i=1, 2 ,3.
    \end{aligned}
  \end{equation*}
  Then according to the Aubin-Lions Lemma\cite{simon87}, we also have strong convergence:
  \begin{equation}
    \begin{aligned}
       & \nabla u^n \to \nabla u \text{~~ in~~} C(0,T;L^5(\Omega)), \\
       & u^n\to u \text{~~ in~~}C([0,T]\times \Omega),              \\
       & u^n_t \to u_t \text{~~ in~~} C(0,T;L^5(\Omega)),           \\
       & u^n_t \to u_t \text{~~ a.e. in~~} [0,T]\times\Omega.
    \end{aligned}
  \end{equation}
  We observe that $(g^{\prime}(u^n_t))^{1/2}\partial_{x_i} u^n_t=\partial_{x_i}g^*(u^n_t)$ for some function $g^*$. Then due to the almost everywhere convergence $u^n_t\to u_t$  we have
  $$\partial_{x_i}g^*(u^n_t)\to \partial_{x_i}g^*(u_t) \text{~~ weakly in~~} L^{2}([0,T]\times\Omega) \text{~for } i=1, 2 ,3.$$
  Combining this with weak convergence $(g^{\prime}(u^n_t))^{1/2}\partial_{x_i}u^n_t\to \xi_i$, we can deduce that $\xi_i=\partial_{x_i}g^*(u_t)=(g^{\prime}(u_t))^{1/2}\partial_{x_i}u_t$.

  Finally, by taking the limit $n\to \infty$ in the approximate equation \eqref{eq_fdap}, it can be seen that the limit function $u(t)$ is in fact a strong solution of equation \eqref{eq1}. The proof is complete.
\end{proof}

\subsection{Existence of the weak solution}

In this section, we establish the existence of  weak solutions by approximating strong solutions proved above.

\begin{thm}[Existence of the weak solution]\label{weak}
  Assume that $\phi\in L^2(\Omega)$ and Assumptions {\rm({\bf G})} and {\rm({\bf F})} hold. Then, for every $(u_0,u_1) \in \H$, there exists a global solution $u(\cdot)$ of problem \eqref{eq1} with the energy estimate
  \begin{equation}\label{weak_energy_est}
    \| (u(t),u_t(t))\|_{\H}+\int_{0}^{t}\int_{\Omega}g(u_{t})u_{t}\td x\td t\le Q(\| (u_0,u_1)\|_{\H}+\|\phi\|),
  \end{equation}
  holding for any $t\ge 0$, where $Q$ is a continuous increasing function that is independent of $u$.
\end{thm}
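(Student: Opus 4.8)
\textbf{Strategy and uniform bounds.} The idea is to obtain $u$ as a limit of strong solutions associated to regularized data and to pass to the limit in the nonlinear damping by the monotonicity (Minty--Browder) method, the remaining terms being controlled by the estimates already established. Pick $(u_0^n,u_1^n)\in\V$ with $(u_0^n,u_1^n)\to(u_0,u_1)$ in $\H$, and let $u^n$ be the strong solution from Proposition~\ref{strong}. Testing \eqref{eq1} with $u^n_t$ gives the energy identity $E(u^n(t))+\int_0^t\int_\Omega g(u^n_s)u^n_s\,\td x\,\td s=E(u^n(0))$ with $E(v)=\tfrac12\|v_t\|^2+\tfrac12\|\nabla v\|^2+\int_\Omega F(v)-\int_\Omega\phi v$. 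By \eqref{fl-Flu} and $\lambda<\lambda_1$, $E$ is coercive, $E(v)\ge c_0(\|v_t\|^2+\|\nabla v\|^2)-C$, and since $|F(s_1)-F(s_2)|\le C(1+|s_1|^p+|s_2|^p)|s_1-s_2|$ with $p\le5$, one has $E(u^n(0))\to E(u_0,u_1)$. Hence $\{u^n\}$ is bounded in $L^\infty(0,T;H^1_0)$, $\{u^n_t\}$ in $L^\infty(0,T;L^2)$, and $\int_0^T\int_\Omega g(u^n_t)u^n_t\,\td x\,\td t$ is bounded; by \eqref{g_delta} this bounds $\{u^n_t\}$ in $L^{m+1}((0,T)\times\Omega)$ and $\{g(u^n_t)\}$ in $L^{(m+1)/m}((0,T)\times\Omega)$. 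As each $u^n$ is a weak solution, Proposition~\ref{priori} bounds $\{u^n\}$ in $L^{k+1}(0,T;L^{3k+3})$, and then Remark~\ref{rem_fu} bounds $\{f(u^n)\}$ in $L^{(m+1)/m}((0,T)\times\Omega)$; finally $u^n_{tt}=\Delta u^n-g(u^n_t)-f(u^n)+\phi$ is bounded in $L^{(m+1)/m}(0,T;H^{-1})+L^{(m+1)/m}((0,T)\times\Omega)$.

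\textbf{Compactness and the limit equation.} Along a subsequence, $u^n\wsto u$ in $L^\infty(0,T;H^1_0)$, $u^n_t\wsto u_t$ in $L^\infty(0,T;L^2)$, $u^n_t\wto u_t$ in $L^{m+1}((0,T)\times\Omega)$, and $g(u^n_t)\wto\chi$ in $L^{(m+1)/m}((0,T)\times\Omega)$ for some $\chi$. By the Aubin--Lions lemma \cite{simon87} and the bound on $u^n_{tt}$, $u^n\to u$ in $C([0,T];L^q(\Omega))$ for every $q<6$ and a.e. in $(0,T)\times\Omega$, so $f(u^n)\to f(u)$ a.e. and, by the uniform $L^{(m+1)/m}$ bound, $f(u^n)\wto f(u)$; moreover $u^n_t\to u_t$ in $C([0,T];H^{-\sigma}(\Omega))$ for some $\sigma>0$, which yields $u^n(t)\wto u(t)$ in $H^1_0$ and $u^n_t(t)\wto u_t(t)$ in $L^2$ for every $t$, hence $u(0)=u_0$ and $u_t(0)=u_1$. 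Passing to the limit in the distributional formulation, $u$ satisfies \eqref{eq1} in the sense of distributions with $g(u_t)$ replaced by $\chi$.

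\textbf{Identifying the damping term (the main obstacle).} It remains to prove $\chi=g(u_t)$. Monotonicity of $g$ gives, for every $v\in L^{m+1}((0,T)\times\Omega)$, $\int_0^T\int_\Omega(g(u^n_t)-g(v))(u^n_t-v)\,\td x\,\td t\ge0$; expanding, using the weak convergences, and then letting $v\to u_t$ in $L^{m+1}$ yields $\liminf_n\int_0^T\int_\Omega g(u^n_t)u^n_t\,\td x\,\td t\ge\int_0^T\int_\Omega\chi u_t\,\td x\,\td t$. The delicate point is the reverse estimate $\limsup_n\int_0^T\int_\Omega g(u^n_t)u^n_t\,\td x\,\td t\le\int_0^T\int_\Omega\chi u_t\,\td x\,\td t$: from the energy identity $\int_0^T\int_\Omega g(u^n_t)u^n_t\,\td x\,\td t=E(u^n(0))-E(u^n(T))$ with $E(u^n(0))\to E(u_0,u_1)$, this reduces to a sharp lower bound for $\liminf_n E(u^n(T))$, which is extracted from weak lower semicontinuity of $E$ together with the energy relation satisfied by the limit $u$ (obtained by passing to the limit in the identities on $[0,t]$, and, if necessary, using the extra integrability $u\in L^{k+1}(0,T;L^{3k+3})$). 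Once $\int_0^T\int_\Omega g(u^n_t)u^n_t\,\td x\,\td t\to\int_0^T\int_\Omega\chi u_t\,\td x\,\td t$, the Minty trick — taking $v=u_t-\vartheta\varphi$, dividing by $\vartheta>0$ and letting $\vartheta\to0^+$, using $g\in C^1$ and \eqref{G1} — gives $\int_0^T\int_\Omega(\chi-g(u_t))\varphi\,\td x\,\td t=0$ for all $\varphi$, hence $\chi=g(u_t)$ a.e.

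\textbf{Conclusion.} Thus $u$ solves \eqref{eq1} distributionally with $u(0)=u_0$, $u_t(0)=u_1$; the continuity $u\in C([0,T];H^1_0)$ and $u_t\in C([0,T];L^2)$ follows from Strichartz/energy estimates for the linear wave operator applied to $u_{tt}-\Delta u=\phi-g(u_t)-f(u)\in L^2((0,T)\times\Omega)+L^{(m+1)/m}((0,T)\times\Omega)$ together with the strong convergences above, while $u_t\in L^{m+1}((0,T)\times\Omega)$ is already known, so $u$ is a weak solution. Passing to the limit (lower semicontinuity) in the uniform bounds of the first step gives \eqref{weak_energy_est}, and since $T>0$ was arbitrary with uniform bounds, $u$ is global. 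I expect the reverse energy inequality in the previous step to be the genuine difficulty, precisely because the limit $u$ a priori satisfies only an energy inequality and $u_t$ is not an admissible test function in the limit equation, so that inequality has to be obtained entirely through the energy identities of the strong approximations.
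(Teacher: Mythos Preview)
Your approach is essentially identical to the paper's: approximate by strong solutions from Proposition~\ref{strong}, extract weak limits, and identify the damping via Minty--Browder using the energy identities. Your uniform bounds, compactness, and passage to the limit in the equation are all correct and match the paper.

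The one point worth correcting is your closing remark, where you flag the ``reverse energy inequality'' as the genuine difficulty ``because the limit $u$ a priori satisfies only an energy inequality and $u_t$ is not an admissible test function in the limit equation.'' In fact this worry is unfounded, and the paper's proof hinges precisely on the observation that $u_t$ \emph{is} admissible. Once the limit equation $u_{tt}-\Delta u=-f(u)-\chi+\phi$ is established, the right-hand side lies in $L^{(m+1)/m}((0,T)\times\Omega)$: indeed $\chi\in L^{(m+1)/m}$ by construction, $\phi\in L^2\subset L^{(m+1)/m}$, and $f(u)\in L^{(m+1)/m}$ by Remark~\ref{rem_fu} (which uses the space-time estimate of Proposition~\ref{priori}, itself inherited by $u$ via Fatou). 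Since $u_t\in L^{m+1}((0,T)\times\Omega)$, a standard mollification-in-time argument for the linear wave operator then yields the exact energy \emph{equality}
\[
E(u(t))+\int_0^t\!\!\int_\Omega \chi\,u_s\,\td x\,\td s=E(u(0)).
\]
Comparing with the energy equality for $u^n$, using $E(u^n(0))\to E(u(0))$ and weak lower semicontinuity $E(u(t))\le\liminf_n E(u^n(t))$, gives directly
\[
\int_0^t\!\!\int_\Omega \chi\,u_s\,\td x\,\td s \ \ge\ \liminf_n \int_0^t\!\!\int_\Omega g(u^n_s)u^n_s\,\td x\,\td s,
\]
which is all that Lions' monotonicity argument requires. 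So the step you anticipated as delicate is, in this setting, straightforward once you exploit the $L^{(m+1)/m}$ integrability of $f(u)$ furnished by the a~priori space-time estimate.
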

\begin{proof}
  Consider a sequence $(u^n_0,u^n_1)\in \V$ such that $u^n_0 \to u_0$ in $H^1_0(\Omega)$ and $u^n_1 \to u_1$ in $L^2(\Omega)$. We can obtain a sequence of strong solutions $u^n(t)$ with initial data $(u^n_0,u^n_1)$ that satisfy the following estimate
  \begin{equation*}
    \| (u^n(t),u^n_t(t))\|_{\H}+\|u^n\|_{L^{k+1}(t,t+1;L^{3k+3}(\Omega))}\le  Q(\| (u_0,u_1)\|_{\H}+\|\phi\|).
  \end{equation*}
  By Lemma \ref{priori},  we also have the a priori space-time estimate
  \begin{equation}
    \|u^n\|_{L^{k+1}(t,t+1;L^{3k+3}(\Omega))}\le Q(\| (u_0,u_1)\|_{\H}+\|\phi\|).
  \end{equation}
  Without loss of generality, we assume that
  \begin{align*}
     & u^n\to u \text{ weakly star in } L^\infty(0,T;H^1_0(\Omega)),              \\
     & u^n\to u \text{ in } L^5((0,T)\times\Omega) \text{ and almost everywhere,} \\
     & u^n_t\to u_t \text{ weakly in } L^{m+1}((0,T)\times\Omega),                \\
     & g(u^n_t)\to \tilde{g} \text{ weakly in } L^{(m+1)/m}((0,T)\times\Omega).
  \end{align*}
  Passing to the limit as $n\to \infty$ in the definition of weak solution, then
  $$\phi=u^n_{tt}-\Delta u^n+f_{\eps_n}(u^n)+g(u^n_t)\to u_{tt}-\Delta u+f(u)+\tilde{g}$$
  weakly-star in $L^\infty(0,T;H^{-1}(\Omega))+L^{(m+1)/m}((0,T)\times\Omega)$.

  Since $u_t\in L^{m+1}((0,T)\times\Omega)$, we can multiply equation $u_{tt}-\Delta u=-f(u)-\tilde{g}+\phi\in L^{(m+1)/m}((0,T)\times\Omega)$ by $u_t$ and integrate over $(0,t)\times\Omega$, yielding the energy equality
  \begin{equation}
    E(u(t))+\int_0^t\int_\Omega \tilde{g}u_s\td x\td s=E(u(0)).
  \end{equation}
  For the approximate solution $u^n(t)$, the following energy equality also holds:
  \begin{equation}
    E(u^n(t))+\int_0^t\int_\Omega g(u^n_s)u^n_s\td x\td s=E(u^n(0)).
  \end{equation}
  Along with
  $$E(u(t))\le \liminf_{n\to \infty}E(u^n(t))\text{~~and~~} E(u(0))= \lim_{n\to \infty}E(u^n(0)),$$
  we obtain
  \begin{equation}
    \int_0^t\int_\Omega \tilde{g} u_s\td x\td s\ge \liminf_{n\to \infty}\int_0^t\int_\Omega g(u^n_s)u^n_s\td x\td s.
  \end{equation}
  This is a sufficient condition, according to the monotonicity argument of Lions\cite{Lions65}, to guarantee $\tilde{g}=g(u_t)$. Therefore, the limit function $u(t)$ is a weak solution with desired estimate. The proof is complete.
\end{proof}
We are now ready to define the solution semigroup $S(t):\H\rightarrow\H $ associated with equation \eqref{eq1} as
\begin{equation}\label{def:S}
  S(t) (u_0,u_1)=  (u(t),u_t(t)),
\end{equation}
where $u(t)$ is the unique weak solution of problem \eqref{eq1}. According to Theorem \ref{unique} and Theorem \ref{weak}, this semigroup is well-defined.
\begin{remark}\label{re2}
  In the higher dimension $N\ge 4$, the situations will be more complex, we would just state the result below and omit the proof.

  Assume that $\phi\in L^2(\Omega)$ and Assumptions {\rm({\bf G})} and {\rm({\bf F})} hold, however the restriction of $p$ in \eqref{F1} be replaced by \begin{equation}
    p\le \begin{cases}
      \min\{\frac{N+2}{N-2}, \frac{Nm}{N-2}\}, & N=4,                        \\
      \frac{N+2}{N-2},                         & m> \frac{N}{2}-1, N\ge 5,   \\
      \frac{Nm}{N-2},                          & m\le \frac{N}{2}-1, N\ge 5.
    \end{cases}
  \end{equation}
  Then, for every $(u_0,u_1) \in \H$, the weak solution $u(\cdot)$ of problem \eqref{eq1} is well-posedness and satisfies the following a priori space-time estimate
  \begin{equation}
    \int_0^T \|u\|^{k+1}_{\frac{n(k+1)}{n-2}}\td t\le Q(R,T),
  \end{equation}
  where $k=\min\{\frac{n+2}{(n-2)m},\frac{nm}{n-2}-\frac{2}{n-2}\}$,
  $$R=\sup_{0\le s \le T}(\|\nabla u(s)\|+\|u_t(s)\|)+\int_0^T\|u_t\|^{m+1}_{m+1}\td t+\|\phi\|+1,$$
  and $Q$ is a continuous increasing function.
\end{remark}

\section{Global attractor}\label{sec:attr}

In this section, we prove the existence of global attractor for problem \eqref{eq1}.
For  the reader's convenience, we now recall the definition of the global attractor; see \cite{babin92,temam97} for more details.
\begin{Def}
  Let $\{S(t)\}_{t\geq 0}$ be a semigroup on a metric space $(X,d)$.
  A subset $\mathscr{A}$ of $X$ is called a global attractor for the
  semigroup, if $\mathscr{A}$ is
  compact and enjoys the following properties:\\
  (1) $\mathscr{A}$ is invariant , i.e. $S(t)\mathscr{A}=\mathscr{A}$,~$\forall t\geq 0$;\\
  (2) $\mathscr{A}$ attracts all bounded sets of $X$, that is, for any bounded
  subset $B$ of $X$,
  $$ dist(S(t)B,\mathscr{A})\rightarrow 0,~~~as ~~t\rightarrow 0 ,$$
  where $dist(B,A)$ is the semidistance of two sets B and A:$$dist(B,A)=\sup \limits_{x\in B}\inf\limits_{y\in A}d(x,y).$$
\end{Def}
According to the abstract attractor existence theorem (see \cite{temam97}, for instance), the existence of the global attractor can be guaranteed provided the semigroup $\{S(t)\}_{t \ge 0}$ is continuous, dissipative, and asymptotically compact in the phase space $\H$. The continuity of $S(t)$ has been proven in Lemma \ref{unique}; therefore, it remains to verify the dissipativity and asymptotic compactness of the semigroup defined in \eqref{def:S}. In particular,  we show that
$\{S(t)\}_{t \ge 0}$ is dissipative  by contructing a bounded absorbing set in Lemma \ref{le4.1} below and prove the asymptotic compactness of $\{S(t)\}_{t \ge 0}$ in Lemma \ref{le4.6} below.

\begin{lemma}[Bounded absorbing set]\label{le4.1}
  Let the assumptions of Theorem \ref{weak} hold. Then the semigroup $\{S(t)\}_{t \geq 0}$ generated by the weak solution of problem \eqref{eq1} has a bounded absorbing set $\mathscr{B}_0$ in $\mathcal{H}$. That is, for any bounded subset $B \subset \mathcal{H}$, there exists $T_0$ depending on the $\mathcal{H}$-bounds of $B$ such that $S(t)B \subset \mathscr{B}_0$ for all $t \geq T_0$.
\end{lemma}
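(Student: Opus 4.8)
The plan is the classical dissipation argument for damped wave equations: the monotonicity of the natural energy first confines the forward orbits, and a perturbed Lyapunov functional combined with the Gronwall‑type Lemma~\ref{gronwall} then delivers the absorbing ball. All the multiplier computations below are legitimate for weak solutions thanks to the space–time integrability established in Proposition~\ref{priori} and Remark~\ref{rem_fu}, exactly as in the proof of Theorem~\ref{weak}.

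First I would work with the energy $E(u(t))=\frac12\|u_t\|^2+\frac12\|\nabla u\|^2+\int_\Omega F(u)\,\td x-\langle\phi,u\rangle$. Pairing \eqref{eq1} with $u_t$ gives $\frac{\td}{\td t}E(u(t))+\int_\Omega g(u_t)u_t\,\td x=0$, and since $g$ is increasing with $g(0)=0$ the last term is nonnegative, so $E(u(t))$ is non‑increasing. By \eqref{fl-Flu}, the Poincar\'e inequality and $\lambda<\lambda_1$ the energy is coercive, $E(u(t))\ge c_0\|(u(t),u_t(t))\|_{\H}^2-C(1+\|\phi\|^2)$, while $E(u(0))\le Q(\|(u_0,u_1)\|_{\H}+\|\phi\|)$ (using $F(s)\le C(1+|s|^{p+1})$, $p+1\le6$ and Sobolev, or directly \eqref{weak_energy_est}). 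Hence the forward orbit of any bounded $B\subset\H$ stays in the fixed sublevel set $\{E\le\rho_B\}$, a bounded ball of $\H$; let $\rho_B<\infty$ denote its $E$‑level.

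Next I would introduce $\Lambda_\eps(t)=E(u(t))+\eps\langle u(t),u_t(t)\rangle$, which for $\eps$ below a threshold satisfies $\frac12E-C\le\Lambda_\eps\le\frac32E+C$ and is therefore coercive with an $\eps$‑independent constant. Differentiating and inserting \eqref{eq1},
\begin{equation*}
\frac{\td}{\td t}\Lambda_\eps=-\int_\Omega g(u_t)u_t\,\td x+\eps\|u_t\|^2-\eps\|\nabla u\|^2-\eps\langle u,g(u_t)\rangle-\eps\langle u,f(u)\rangle+\eps\langle u,\phi\rangle .
\end{equation*}
Here $-\eps\langle u,f(u)\rangle\le\eps\lambda\|u\|^2+C\le\eps\frac{\lambda}{\lambda_1}\|\nabla u\|^2+C$ (by \eqref{fl-Flu} and $\lambda<\lambda_1$) is absorbed into $-\eps\|\nabla u\|^2$; the first inequality in \eqref{g_delta} gives $\int_\Omega g(u_t)u_t\,\td x\ge\frac1{c(\delta)}\|u_t\|^2-C$, so for $\eps$ small $\eps\|u_t\|^2-\int_\Omega g(u_t)u_t\,\td x\le-c_1\|u_t\|^2-\frac12\int_\Omega g(u_t)u_t\,\td x+C$; and $\eps\langle u,\phi\rangle$ is dominated by a small multiple of $\|\nabla u\|^2$ plus $C\|\phi\|^2$. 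The one delicate term is the damping cross term, and this is where the super‑cubic range $m\le5$ bites: since $m+1\le6$, H\"older on the dual pair $L^{m+1}$–$L^{(m+1)/m}$, the Sobolev–Poincar\'e embedding, Young's inequality with exponents $m+1,(m+1)/m$, and the second inequality in \eqref{g_delta} give, for any $\sigma>0$,
\begin{equation*}
\eps|\langle u,g(u_t)\rangle|\le\sigma\|\nabla u\|^{m+1}+C\sigma^{-1/m}\eps^{(m+1)/m}\Big(\int_\Omega g(u_t)u_t\,\td x+1\Big).
\end{equation*}
Because $m+1>2$, the quantity $\|\nabla u\|^{m+1}$ has no global $\|\nabla u\|^2$ bound, but on the confined orbit $\|\nabla u\|^{m+1}\le C\rho_B^{(m-1)/2}\|\nabla u\|^2$, so choosing $\sigma$ of order $\eps\rho_B^{-(m-1)/2}$ and then $\eps$ of order $\rho_B^{-(m-1)/(2m)}$ absorbs $\sigma\|\nabla u\|^{m+1}$ into $-\eps\|\nabla u\|^2$ and the residual dissipation into $-\frac12\int g(u_t)u_t$. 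Collecting the estimates and using the coercivity of $\Lambda_\eps$, one obtains on $\{E\le\rho_B\}$ the inequality $\frac{\td}{\td t}\Lambda_\eps+\alpha\Lambda_\eps\le C$ with $C$ independent of $B$ and $\alpha=\alpha(\rho_B)>0$; Lemma~\ref{gronwall} (with $h_1\equiv0$) then yields $\Lambda_\eps(t)\le\Lambda_\eps(0)e^{-\alpha t}+C/\alpha$, so after a finite time the orbit of $B$ enters $\{E\le\rho_B^{(1)}\}$ with $\rho_B^{(1)}\le C\rho_B^{(m-1)/(2m)}$. Since $\frac{m-1}{2m}<1$, iterating this step finitely many times replaces $\rho_B$ by a level $\rho_*$ depending only on the structural constants and $\|\phi\|$; then $\mathscr{B}_0:=\{w\in\H:E(w)\le2\rho_*\}$, a bounded subset of $\H$, is absorbing.

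The only genuine obstacle is the damping cross term $\langle u,g(u_t)\rangle$ for rapidly growing $g$: it cannot be dominated by the dissipation alone and injects the super‑quadratic term $\|\nabla u\|^{m+1}$ into the Lyapunov estimate. My resolution is to first confine each orbit by the monotonicity of $E$, and then to exploit that the induced renormalization of the absorbing level has a sub‑unit exponent, so a finite iteration produces an absorbing ball independent of the initial bounded set. The remaining work — verifying the elementary inequalities (including \eqref{g_delta}) for the chosen exponents and fixing $\eps,\delta,\sigma$ — is routine.
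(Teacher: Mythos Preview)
Your multiplier computation is fine and matches the paper's: both of you test with $u_t+\eps u$ and arrive at an inequality of the shape
\[
\frac{\td}{\td t}\Lambda_\eps(t)\le -c\,\eps\big(\|u_t\|^2+\|\nabla u\|^2\big)+C,
\]
with $\eps=\eps(\rho_B)$ chosen so that the damping cross term $\eps\langle u,g(u_t)\rangle$ is absorbed. The gap is in the next step. To pass from this to $\frac{\td}{\td t}\Lambda_\eps+\alpha\Lambda_\eps\le C$ with $\alpha\sim\eps$ you need an \emph{upper} bound $\Lambda_\eps\le C\|(u,u_t)\|_\H^2+C$, not coercivity (which is the lower bound). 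But $\Lambda_\eps$ contains $\int_\Omega F(u)\,\td x$, and under \eqref{F1} this is only controlled by $C(1+\|\nabla u\|^{p+1})$ with $p+1$ up to $6$. On the confined orbit $\{E\le\rho_B\}$ you therefore only get $\Lambda_\eps\le C\rho_B^{(p-1)/2}\|(u,u_t)\|_\H^2+C$, hence $\alpha\sim\eps\,\rho_B^{-(p-1)/2}$, and the new level after Gronwall is
\[
\rho_B^{(1)}\sim \frac{C}{\alpha}\sim C\,\rho_B^{\,(p-1)/2+(m-1)/(2m)}.
\]
The actual iteration exponent is $(p-1)/2+(m-1)/(2m)$, not $(m-1)/(2m)$; for every $p\ge 3$ (and in fact for all $p>(2m+1)/m$, well inside the paper's range $2\le p\le\min\{5,3m\}$) this exceeds $1$ and the cascade diverges. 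So as written the argument does not close in the super-cubic regime that is the point of the paper.

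The paper sidesteps this completely. From the same differential inequality it integrates in time to obtain $\alpha\int_0^t J(s)\,\td s\le E_\alpha(u(0))-E_\alpha(u(t))$ with $J(s)=c\|(u(s),u_t(s))\|_\H^2-C$, and since $E_\alpha$ is bounded below there must exist some $t_0\le T_B$ with $J(t_0)\le 1$, i.e.\ the orbit hits a \emph{fixed} $\H$-ball of radius $M$ independent of $B$. At that instant the paper invokes the already proved uniform-in-time estimate \eqref{weak_energy_est} from Theorem~\ref{weak}, which propagates the bound forward forever; no iteration is needed and the superlinear growth of $F$ never enters. Your argument can be repaired the same way: keep your differential inequality, drop the iteration, and finish with the one-hit-plus-\eqref{weak_energy_est} argument.
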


\begin{proof}
  Multiplying equation \eqref{eq1} by $u_t + \alpha u$ ($\alpha > 0$), we obtain the following equality:
  \begin{equation}\label{eq4.2}
    \begin{aligned}
       & \frac{\td}{\td t} E_\alpha(u) + \int_\Omega g(u_t) u_t \, \td x + \alpha \int_\Omega g(u_t) u \, \td x - \alpha \|u_t\|^2 + \alpha \|\nabla u\|^2 \\
       & \quad + \alpha \int_{\Omega} f(u) u \, \td x - \alpha \langle \phi, u \rangle = 0,
    \end{aligned}
  \end{equation}
  where
  \[
    E_\alpha(u) = E(u) + \alpha \langle u, u_t \rangle.
  \]

  Using H\"older's inequality and \eqref{g_delta}, we have
  \begin{equation*}
    \begin{aligned}
      \int_\Omega |g(u_t) u| \, \td x & \leq \left( \int_\Omega |g(u_t)|^{\frac{m+1}{m}} \, \td x \right)^{\frac{m}{m+1}} \|u\|_{m+1}                        \\
                                      & \leq C \left( \int_\Omega \left( c(\delta) g(u_t) u_t + \delta \right) \, \td x \right)^{\frac{m}{m+1}} \|\nabla u\| \\
                                      & \leq C \|\nabla u\|^{\frac{m-1}{m}} \int_\Omega \left( c(\delta) g(u_t) u_t + \delta \right) \, \td x
      + \frac{\lambda_1 - \lambda}{6\lambda_1} \|\nabla u\|^2                                                                                                \\
                                      & \leq C_1 \|\nabla u\|^{\frac{m-1}{m}} \int_\Omega c(\delta) g(u_t) u_t \, \td x + \delta
      + \left( C_1 \delta + \frac{\lambda_1 - \lambda}{6\lambda_1} \right) \|\nabla u\|^2                                                                    \\
                                      & \leq C_1 \|\nabla u\|^{\frac{m-1}{m}} \int_\Omega c(\delta) g(u_t) u_t \, \td x + \delta
      + \frac{\lambda_1 - \lambda}{3\lambda_1} \|\nabla u\|^2,
    \end{aligned}
  \end{equation*}
  where $0 < \delta \leq \frac{\lambda_1 - \lambda}{6\lambda_1 C_1}$. It then follows that
  \begin{equation*}
    \begin{aligned}
           & \int_\Omega g(u_t) u_t \, \td x + \alpha \int_\Omega g(u_t) u \, \td x - \alpha \|u_t\|^2                                           \\
      \geq & \int_\Omega g(u_t) u_t \, \td x - \alpha C_1 \|\nabla u\|^{\frac{m-1}{m}} \int_\Omega c(\delta) g(u_t) u_t \, \td x - \alpha \delta
      - \frac{\alpha (\lambda_1 - \lambda)}{3\lambda_1} \|\nabla u\|^2 - \alpha \|u_t\|^2                                                        \\
      \geq & (1 - \alpha C_1 c(\delta)) \|\nabla u\|^{\frac{m-1}{m}} \int_\Omega g(u_t) u_t \, \td x - \alpha \delta
      - \frac{\alpha (\lambda_1 - \lambda)}{3\lambda_1} \|\nabla u\|^2 - \alpha \|u_t\|^2.
    \end{aligned}
  \end{equation*}
  By choosing $\alpha = \alpha(\|B\|_{\mathcal{H}})$ small enough such that $\alpha c(\delta) (2 + C_1 \|\nabla u\|^{1 - 1/m}) \leq 1$, we obtain
  \begin{equation*}
    1 - \alpha C_1 c(\delta) \|u\|^{1 - 1/m}_6 \geq 2 \alpha c(\delta),
  \end{equation*}
  and consequently,
  \begin{equation}\label{eq4.6}
    \begin{aligned}
           & \int_\Omega g(u_t) u_t \, \td x + \alpha \int_\Omega g(u_t) u \, \td x - \alpha \|u_t\|^2           \\
      \geq & 2 \alpha c(\delta) \int_\Omega g(u_t) u_t \, \td x - \alpha \delta
      - \frac{\alpha (\lambda_1 - \lambda)}{3\lambda_1} \|\nabla u\|^2 - \alpha \|u_t\|^2                        \\
      \geq & 2 \alpha (\|u_t\|^2 - \delta) - \alpha \delta
      - \frac{\alpha (\lambda_1 - \lambda)}{3\lambda_1} \|\nabla u\|^2 - \alpha \|u_t\|^2                        \\
      =    & \alpha \|u_t\|^2 - 3\alpha \delta - \frac{\alpha (\lambda_1 - \lambda)}{3\lambda_1} \|\nabla u\|^2.
    \end{aligned}
  \end{equation}

  Using assumption \eqref{F2}, we obtain
  \begin{equation}\label{eq4.7}
    \int_\Omega f(u) u \, \td x - \langle \phi, u \rangle \geq -C_2 - \lambda \|u\|^2 - \frac{\lambda_1 - \lambda}{3\lambda_1} \|\nabla u\|^2 - C_3 \|\phi\|^2.
  \end{equation}
  Combining inequality \eqref{eq4.6} with \eqref{eq4.7}, we have
  \begin{equation}\label{eq4.8}
    \begin{aligned}
       & \int_\Omega g(u_t) u_t \, \td x + \alpha \int_\Omega g(u_t) u \, \td x - \alpha \|u_t\|^2 + \alpha \|\nabla u\|^2 + \alpha \int_\Omega f(u) u \, \td x - \alpha \langle \phi, u \rangle \\
       & \geq \alpha \left( \|u_t\|^2 + \frac{(\lambda_1 - \lambda)}{3\lambda_1} \|\nabla u\|^2 - C_2 - 3\delta - C_3 \|\phi\|^2 \right)                                                         \\
       & \geq \alpha \left( \frac{(\lambda_1 - \lambda)}{3\lambda_1} \|u_t\|^2 + \frac{(\lambda_1 - \lambda)}{3\lambda_1} \|\nabla u\|^2 - C_2 - 3\delta - C_3 \|\phi\|^2 \right)                \\
       & := \alpha J(t).
    \end{aligned}
  \end{equation}
  Substituting \eqref{eq4.8} into \eqref{eq4.2} and integrating with respect to time on $[0, t]$ gives
  \[
    E_\alpha(u(t)) - E_\alpha(u(0)) \leq -\alpha \int_0^t J(s) \, \td s.
  \]
  From the energy estimate, we know that  $E_\alpha(u(0))$ is bounded whenever the initial data set $B$ is fixed. Then for a given initial set $B$, there exists a time $T_B = T(\|B\|_{\mathcal{H}}, M)$ such that, for any $(u_0, u_1) \in B$, there exists $t_0 = t_0(u_0, u_1) \in [0, T_B]$ with $J(t_0) \leq 1$.

  Let
  $$M = \frac{3\lambda_1 (C_2 + 3\delta + C_3 \|\phi\|^2)}{\lambda_1 - \lambda},$$
  then \eqref{weak_energy_est} implies $\|(u(t), u_t(t))\|_{\mathcal{H}} \leq Q(M, \|\phi\|)$ for all $t \geq T_B$. Thus, we conclude that the set $\mathscr{B}_0 = \{(u, v) \in \mathcal{H} \mid \|(u, v)\|_{\mathcal{H}} \leq Q(M, \|\phi\|)\}$ is a bounded absorbing set. The proof is complete.
\end{proof}


We now prove the asymptotic compactness using the energy medthod(see, e.g., \cite{ball04,mrw98}).
\begin{lemma}[Asymptotic compactness]\label{le4.6}
  Let the assumptions of Theorem \ref{weak} hold. Then the semigroup associated with \eqref{eq1} is asymptotically compact, that is, for every sequence $\{z_n\}^{\infty}_{n=1}\subset \mathscr{B}_0$, and every sequence of times $t_n\to \infty$, there exists a subsequence $\{n_k\}$ such that
  $$\{S(t_{n_k})z_{n_k}\}^{\infty}_{=1}\rightarrow z \text{~~strongly in~~} \H.$$
\end{lemma}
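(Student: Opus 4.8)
The plan is to run the energy--equation method of Ball \cite{ball04} and Moise--Rosa--Wang \cite{mrw98}. Since $\H$ is a Hilbert space and, on bounded subsets of $\H$, the functional $(u,u_t)\mapsto J(u):=\int_\Omega F(u)\,\td x-\la\phi,u\ra$ is sequentially weakly continuous — using $H^1_0(\Omega)\cto L^q(\Omega)$ compactly for $q<6$, and, in the borderline case $p=5$, the a priori space--time estimate of Proposition \ref{priori}, which supplies the extra integrability of $|u|^{p+1}$ needed to pass to the limit in $\int_\Omega F(u)\,\td x$ — it suffices to show that, along a suitable subsequence, $S(t_n)z_n\wto\xi$ weakly in $\H$ and $E(S(t_n)z_n)\to E(\xi)$. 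Indeed, weak convergence together with convergence of the energy forces $\|u_t\|^2+\|\nabla u\|^2$ to converge, whence strong convergence in $\H$.

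First I would extract the weak limit and attach a complete trajectory to it. By Lemma \ref{le4.1}, $S(t_n)z_n\in\mathscr{B}_0$ for $n$ large, so $\{S(t_n)z_n\}$ is bounded in $\H$; passing to a subsequence, $S(t_n)z_n\wto\xi$. Applying the absorbing property to $S(t_n-j)z_n$ for each $j\in\mathbb N$ and diagonalizing, we may further assume $S(t_n-j)z_n\wto\gamma(-j)$ in $\H$ for every $j\ge0$; the weak continuity of $S(t)$ (Lemma \ref{le4.5}) then gives $S(i)\gamma(-j)=\gamma(-(j-i))$ for $0\le i\le j$ and $\gamma(0)=\xi$, so $\gamma$ extends to a bounded complete trajectory through $\xi$ contained in $\mathscr{B}_0$. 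In particular $E(\gamma(\cdot))$ is non-increasing and bounded, and the energy equality holds along $\gamma$.

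Next comes the energy equation on a moving window. Fix $T>0$ and put $v_n(s):=S(t_n-T+s)z_n$ for $s\in[0,T]$, so $v_n(0)\wto\gamma(-T)$, $v_n(T)=S(t_n)z_n\wto\xi$, and by weak continuity $v_n(s)\wto w(s):=\gamma(-T+s)$ for each $s$. The energy equality for the weak solution $v_n$ gives
\[
  E(S(t_n)z_n)=E(S(t_n-T)z_n)-\int_0^T\!\!\int_\Omega g(\p_s v_n)\p_s v_n\,\td x\,\td s .
\]
From the uniform energy estimate, $v_n$ is bounded in $L^\infty(0,T;H^1_0(\Omega))$ and $\p_s v_n$ in $L^\infty(0,T;L^2(\Omega))\cap L^{m+1}((0,T)\times\Omega)$; rewriting \eqref{eq1} and using \eqref{G1}, \eqref{F1} and Remark \ref{rem_fu}, $\p_{ss}v_n$ is bounded in $L^{(m+1)/m}(0,T;H^{-1}(\Omega))$. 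By the Aubin--Lions lemma, along a further subsequence $v_n\to w$ and $\p_s v_n\to w_s$ a.e.\ in $(0,T)\times\Omega$, so Fatou's lemma (recall $g(s)s\ge0$) yields
\[
  \liminf_{n\to\infty}\int_0^T\!\!\int_\Omega g(\p_s v_n)\p_s v_n\,\td x\,\td s\ \ge\ \int_0^T\!\!\int_\Omega g(w_s)w_s\,\td x\,\td s\ =\ E(\gamma(-T))-E(\xi),
\]
the last equality being the energy equality for the weak solution $w=S(\cdot)\gamma(-T)$. Combining the two displays,
\[
  \limsup_{n\to\infty}E(S(t_n)z_n)-E(\xi)\ \le\ \limsup_{n\to\infty}E(S(t_n-T)z_n)-E(\gamma(-T)),\qquad\forall\,T>0 .
\]
Both sides are nonnegative by weak lower semicontinuity of $E$; together with the uniform bound $E(S(t_n-T)z_n)\le\sup_{\mathscr{B}_0}E$ and the convergence of $E(\gamma(-T))$ as $T\to\infty$ (equivalently, finiteness of the total dissipation along $\gamma$), the limiting argument of \cite{ball04,mrw98} — propagating the defect of strong convergence backward along the shifted sequences $S(t_n-jT)z_n$ — yields $\limsup_n E(S(t_n)z_n)\le E(\xi)$. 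With the reverse inequality from weak lower semicontinuity this gives $E(S(t_n)z_n)\to E(\xi)$, hence $S(t_n)z_n\to\xi$ strongly in $\H$.

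The main obstacle is the passage to the limit in the nonlinear damping term, i.e.\ the lower-semicontinuity inequality for $\int_0^T\!\int_\Omega g(\p_s v_n)\p_s v_n\,\td x\,\td s$. This relies on the a.e.\ convergence $\p_s v_n\to w_s$, which does not follow from the mere weak convergence of the initial data and needs the additional compactness coming from the equation (the $L^{(m+1)/m}$--bound on $\p_{ss}v_n$, itself a consequence of \eqref{G1}, \eqref{F1}, the energy estimate and the space--time estimate \eqref{space-time}), and on having the energy \emph{equality} — not merely an inequality — for the limit solution $w$, which is exactly what the monotonicity argument in the proof of Theorem \ref{weak} supplies; closing the chain of inequalities for $E$ is the second delicate point. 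Finally, the borderline growth $p=5$ in the weak continuity of $J$ is handled by the same space--time estimate.
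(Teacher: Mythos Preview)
Your outline follows the energy-equation method of Ball and Moise--Rosa--Wang, as does the paper, but two steps do not go through as written.

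\textbf{Pointwise convergence of $\partial_s v_n$.} You assert that Aubin--Lions, applied with $\partial_s v_n$ bounded in $L^\infty(0,T;L^2(\Omega))\cap L^{m+1}$ and $\partial_{ss}v_n$ bounded in $L^{(m+1)/m}(0,T;H^{-1}(\Omega))$, gives $\partial_s v_n\to w_s$ a.e.\ in $(0,T)\times\Omega$. It does not: there is no spatial regularity of $\partial_s v_n$ beyond $L^2$, so the only compact embedding available in Aubin--Lions is $L^2(\Omega)\hookrightarrow H^{-\eps}(\Omega)$, and you obtain at best $\partial_s v_n\to w_s$ strongly in $C([0,T];H^{-\eps}(\Omega))$, which carries no a.e.\ information. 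Hence your Fatou step for $\int_0^T\!\int_\Omega g(\partial_s v_n)\partial_s v_n$ is unjustified. The paper avoids this by using the \emph{monotonicity} of $g$ instead of Fatou: once $g(\partial_s v_n)\rightharpoonup g(w_s)$ weakly in $L^{(m+1)/m}$ is known (this is precisely the Lions monotonicity identification built into Lemma~\ref{le4.5}/Theorem~\ref{weak}), expanding $(g(\partial_s v_n)-g(w_s))(\partial_s v_n-w_s)\ge 0$ delivers the required lower semicontinuity with no pointwise convergence of the velocities; see \eqref{eq4.16}.

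\textbf{Closing the chain of inequalities.} Your final step --- iterate
\[
 \limsup_n E(S(t_n)z_n)-E(\xi)\ \le\ \limsup_n E(S(t_n-T)z_n)-E(\gamma(-T))
\]
in $T$ and appeal to boundedness of $E(S(t_n-T)z_n)$ together with convergence of $E(\gamma(-T))$ --- does not force the right-hand side to $0$: all you get is $\limsup_n E(S(t_n)z_n)-E(\xi)\le \sup_{\mathscr{B}_0}E-\lim_{T\to\infty}E(\gamma(-T))$, and there is no reason this vanishes. This is not how \cite{ball04,mrw98} actually proceed. They (and the paper) multiply the equation by $u_t+\alpha u$, obtaining the perturbed energy $E_\alpha(u)=E(u)+\alpha\la u,u_t\ra$ and the identity $\frac{\td}{\td t}E_\alpha+\alpha E_\alpha+(\text{remainder})=0$; integration with weight $e^{\alpha s}$ on $(-t_n,0]$ then kills the initial contribution directly as $E_\alpha(z_n)e^{-\alpha t_n}\to 0$, with no backward iteration needed --- see \eqref{eq4.11}--\eqref{eq4.12}. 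In the nonlinear-damping setting the remainder generates $\alpha$- and $\delta$-dependent errors (via \eqref{g_delta} for the terms $\int g(u_t)u$ and $\int u_t^2$, and via \eqref{fs-F} for $\int(f(u)u-F(u))$ when $p=5$), which the paper removes only at the very end by sending first $\alpha\to 0$ and then $\delta\to 0$.
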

\begin{proof}
  First, define
  \begin{equation*}
    (v_n(t),\p_t v_n(t))=
    \begin{cases}
      S(t+t_n)z_n, & t\ge -t_n \\
      0,           & t<-t_n.
    \end{cases}
  \end{equation*}
  Since $\{(v_n(t),\p_t v_n(t))\}$ is bounded in $L^\infty(\R;\H)$ and $\{\p_t v_n(t)\}$ is bounded in $L^{m+1}(\R\times \Omega)$, by Banach-Alaoglu theorem and Eberlein-Smulian theorem, we can assume without loss of generality that there exist 
  $\tilde{g}\in L^{\frac{m+1}m}(\R\times \Omega)$ and $y=(v,v_t)\in L^\infty(\R;\H)$, such that
  \begin{equation*}\label{eq4.10}
    \begin{aligned}
      (v_n,\p_t v_n) & \wto y\text{~weakly star in~}L^\infty(\R;\H),                        \\
      \p_t v_n       & \wto v_t \text{ weakly~in~} L^{m+1}(\R\times \Omega),                \\
      g(\p_t v_n)    & \wto \tilde{g} \text{ weakly in } L^{\frac{m+1}{m}}(\R\times\Omega).
    \end{aligned}
  \end{equation*}

  In the next step, we will show that $\tilde{g}=g(v_t)$. Rewriting the equality \eqref{eq4.2} as follows:
  \begin{equation}\label{eq4.11}
    \frac{\td }{\td t}E_\al(u(t))+ \al E_\al(u(t))+G_\al(u)+N_\al(u)+\Phi_\al(u)=0,
  \end{equation}
  where the modified energy terms are defined by:
  \begin{align*}
     & E_\al(u(t))=E(u(t))+\al\langle u(t),u_t(t)\rangle,                                  \\
     & G_\al(u)=\int_\Omega \left(g(u_t)u_t-\frac{3\al}2 u_t^2+\alpha g(u_t)u\right)\td x, \\
     & N_\al(u)=\frac\al2\|\nabla u\|^2-\al^2\langle u,u_t\rangle,                         \\
     & \Phi_\al(u)=\al\int_{\Omega} (f(u)u-F(u))\td x.
  \end{align*}
  Replacing $u$ with the sequence $v_n$ in \eqref{eq4.11},   multiplying by $e^{\al t}$, and integrating over $[-t_n,0]$ gives
  \begin{equation}\label{eq4.12}
    E_\al(v_n(0))+\int_{-t_n}^0e^{\al s}\left(G_\al(v_n(s))+N_\al(v_n(s))+\Phi_\al(v_n(s))\right)\td s=E_\al(z_n)e^{-\al t_n}.
  \end{equation}
  In order to pass the limit $n\to \infty$, we will handle each term in \eqref{eq4.12} individually.

  Firstly, the Aubin-Lions lemma implies that
  $$v_n\to v \text{~in~} C_{loc}(\R;L^5(\Omega)).$$
  Thus, $v_n\to v$ almost everywhere in $(0,\infty)\times \Omega$. Since $F(v_n)$ is bounded from below and weakly lower semi-continuous,   $E_\alpha$ and $N_\al$ are weakly lower semi-continuous on $\H$.
  From the weak convergence
  $$(v_n(0),\p_tv_n(0))=S(t_n)z_n\wto y(0)=(v(0),\p_tv(0))\text{~in~}\H,$$
  we obtain
  \begin{equation}\label{eq4.13}
    \varliminf_{n\to \infty}E_\al(v_n(0))\ge E_\al(v(0)),~~ \varliminf_{n\to \infty}N_\al(v_n(0))\ge N_\al(v(0)).
  \end{equation}

  For the term involving $\Phi_\al(v_n)$ in \eqref{eq4.12}, notice that
  due to \eqref{fs-F}, $f(s)s-F(s)+\omega s^6+K_\omega s^2$ is bounded from below for every $\omega>0$. Thus Fatou's Lemma implies that
  \begin{equation*}
    \begin{aligned}
       & \varliminf_{n\to\infty}\int_{-t_n}^0e^{\al s} \left(\Phi_\al(v_n)+\int_\Omega(\omega v_n^6+K_\omega v_n^2)\td x\right) \td s \\
       & \ge  \int_{-\infty}^0e^{\al s} \left(\Phi_\al(v)+\int_\Omega(\omega v^6+K_\omega v^2)\td x\right) \td s.
    \end{aligned}
  \end{equation*}
  Meanwhile, $\|v_n\|^2 \to \|v\|^2$ in $C_{loc}(\R)$ yields
  \begin{equation*}
    \lim_{n\to\infty}\int_{-t_n}^0e^{\al s}\int_\Omega v_n^2\td x \td s= \int_{-\infty}^0e^{\al s}\int_\Omega v^2\td x \td s.
  \end{equation*}
  Therefore,
  \begin{equation}\label{new:20}
    \begin{aligned}
      \varliminf_{n\to\infty}\int_{-t_n}^0e^{\al s} \Phi_\al(v_n) \td s+\omega\varlimsup_{n\to\infty}\int_{-t_n}^0e^{\al s} \int_\Omega (v_n^6-v^6)\td x \td s
      \ge \int_{-\infty}^0e^{\al s} \Phi_\al(v)\td s.
    \end{aligned}
  \end{equation}
  Taking the limit  of \eqref{new:20} as $\omega\to0$ results in
  \begin{equation}\label{eq4.9}
    \varliminf_{n\to\infty}\int_{-t_n}^0e^{\al s} \Phi_\al(v_n) \td s\ge \int_{-\infty}^0e^{\al s} \Phi_\al(v)\td s.
  \end{equation}
  Passing to the limit as $n\to \infty$ in equality \eqref{eq4.12} and taking into account \eqref{eq4.13} and \eqref{eq4.9}, we have
  \begin{equation}\label{4.10}
    E_\al(v(0))+\varliminf_{n\to \infty}\int_{-t_n}^0e^{\al s} G_\al(v_n(s))\td s+\int_{-\infty}^0e^{\al s}\left(N_\al(v(s))+\Phi_\al(v(s))\right)\td s\le 0.
  \end{equation}

  On the other hand, passing to the limit as $n\to \infty$ in equation \eqref{eq1} for $v_n$, we can find that $v$ satisfies equality
  $$v_{tt}-\Delta v+f(v)+\tilde{g}=\phi.$$

  As stated in Remark \ref{rem_fu}, the boundedness of $\{v_n\}$ in $L_{loc}^{k+1}(\R;L^{3k+3}(\Omega))$ ensures that $\{f(v_n)\}$ is bounded in $L_{loc}^{(m+1)/m}(\R;L^{{(m+1)/m}}(\Omega))$, which yields that  $f(v)\in L_{loc}^{(m+1)/m}(\R;L^{{(m+1)/m}}(\Omega))$.
  Since $v_t\in L^{m+1}(\R\times\Omega)$, we can take $v_t+\al v$ as a test function in equation $v_{tt}-\Delta v=-f(v)-\tilde{g}+\phi$. Repeating the derivation of \eqref{eq4.12}, we get
  \begin{equation}\label{4.11}
    E_\al(v(0))+\int_{-\infty}^0e^{\al s}\int_\Omega \left(\tilde{g}v_t-\frac{3\al}2 v_t^2+\alpha \tilde{g}v\right)\td x\td s+\int_{-\infty}^0e^{\al s}\left(\Phi_\al(v)+N_\al(v)\right)\td s=0.
  \end{equation}
  Now, comparing with inequalities \eqref{4.10} and \eqref{4.11} gives
  \begin{equation}\label{4.12}
    \int_{-\infty}^0e^{\al s}\int_\Omega \left(\tilde{g}v_t-\frac{3\al}2 |v_t|^2+\alpha \tilde{g}v\right)\td x\td s \ge \varliminf_{n\to \infty}\int_{-t_n}^0e^{\al s} G_\al(v_n(s))\td s.
  \end{equation}
  Recalling that
  \begin{equation}\label{newdef:G}
    G_\al(v_n(s))=\int_\Omega \left(g(\p_t v_n)\p_t v_n-\frac{3\al}2 |\p_t v_n|^2+\alpha g(\p_t v_n)v_n\right)\td x.
  \end{equation}
  For the second term in $G_\al(v_n(s))$, by inequality $s^2\le c(\delta)g(s)s+\delta$, we have
  $$|\partial_t v_n|^2 \le c(\delta)g(\partial_t v_n)\partial_t v_n+\delta,$$
  which implies that
  \begin{equation}\label{4.14}
    \begin{aligned}
      \al \int_{-t_n}^0e^{\al s}\int_\Omega |\partial_t v_n|^2\td x\td s 
      \le \al c(\delta) \int_{-t_n}^0e^{\al s}\int_\Omega g(\partial_t v_n)\partial_t v_n \td x \td s+ \delta m (\Omega).
    \end{aligned}
  \end{equation}
  For the term involving $g(\partial_t v_n) v_n$ in $G_\al(v_n(s))$, H\"older inequality implies
  \begin{equation}
    \int_\Omega \left|g(\partial_t v_n) v_n\right|\td x
    \le \al \|v_n\|_{m+1}\|g(\partial_t v_n)\|_{{\frac{m+1}{m}}}.
  \end{equation}
  Then we utilize the inequality $|g(s)|^{(m+1)/m}\le c(\delta)g(s)s+\delta$ to obtain
  \begin{equation}\label{4.16}
    \begin{aligned}
      \|g(\partial_t v_n)\|_{\frac{m+1}{m}} & \le \(\int_\Omega \(c(\delta)g(\partial_t v_n)\partial_t v_n+\delta\) \td x\)^{\frac{m}{m+1}}                                \\
                                            & \le \delta^{-\frac{1}{m+1}} \int_\Omega (c(\delta)g(\partial_t v_n)\partial_t v_n+\delta) \td x +\delta^{\frac{m}{m+1}}      \\
                                            & \le \delta^{-\frac{1}{m+1}} c(\delta)\int_\Omega g(\partial_t v_n)\partial_t v_n \td x +\delta^{\frac{m}{m+1}}(m(\Omega)+1).
    \end{aligned}
  \end{equation}
  Thus
  \begin{equation}\label{4.17}
    \begin{aligned}
       & \int_{-t_n}^0e^{\al s}\int_\Omega \alpha \left|g(\partial_t v_n) v_n\right|\td x\td s                                                                                                                    \\
       & \le \int_{-t_n}^0e^{\al s}\al \|v_n\|_{m+1}\|g(\partial_t v_n)\|_{{\frac{m+1}{m}}} \td s                                                                                                                 \\
       & \le \al\delta^{-\frac{1}{m+1}} c(\delta)\|\mathscr{B}_0\|_{\H} \int_{-t_n}^0e^{\al s} \int_\Omega g(\partial_t v_n)\partial_t v_n \td x\td s +\delta^{\frac{m}{m+1}}\|\mathscr{B}_0\|_{\H}(m(\Omega)+1).
    \end{aligned}
  \end{equation}
  Collecting \eqref{newdef:G}, \eqref{4.14} and \eqref{4.17}, we conclude that
  \begin{equation}\label{4.18}
    \varliminf_{n\to\infty}\int_{-t_n}^0e^{\al s}G_\al(v_n)\td s \ge (1-\al a(\delta))\varliminf_{n\to\infty}\int_{-t_n}^0e^{\al s}\int_\Omega g(\partial_t v_n)\partial_t v_n \td x\td s-b(\delta),
  \end{equation}
  where
  $$a(\delta)=\frac32 c(\delta)+\delta^{-\frac{1}{m+1}}c(\delta)\|\mathscr{B}_0\|_{\H}$$
  and
  $$b(\delta)=\frac32\delta m(\Omega)+\delta^{\frac{m}{m+1}}\|\mathscr{B}_0\|_{\H}(m(\Omega)+1).$$
  According to \eqref{4.16},
  \begin{equation}\label{4.19}
    \begin{aligned}
          & \int_{-\infty}^0e^{\al s}\int_\Omega \alpha \tilde{g}v\td x\td s                                                                                                                                                         \\
      =   & \lim_{n\to \infty}\int_{-t_n}^0e^{\al s}\int_\Omega \alpha g(\partial_t v_n) v\td x\td s                                                                                                                                 \\
      \le & \al\delta^{-\frac{1}{m+1}} c(\delta)\|\mathscr{B}_0\|_{\H} \varliminf_{n\to\infty}\int_{-t_n}^0e^{\al s}\int_\Omega g(\partial_t v_n)\partial_t v_n \td x\td s+\delta^{\frac{m}{m+1}}\|\mathscr{B}_0\|_{\H}(m(\Omega)+1) \\
      \le & \al a(\delta)\varliminf_{n\to\infty}\int_{-t_n}^0e^{\al s}\int_\Omega g(\partial_t v_n)\partial_t v_n \td x\td s+b(\delta).
    \end{aligned}
  \end{equation}
  Substituting \eqref{4.18} and \eqref{4.19} into \eqref{4.12}, we have
  \begin{equation}\label{4.20}
    \int_{-\infty}^0e^{\al s}\int_\Omega \tilde{g}v_t\td x\td s
    \ge (1-2\al a(\delta))\varliminf_{n\to\infty}\int_{-t_n}^0e^{\al s}\int_\Omega g(\partial_t v_n)\partial_t v_n \td x\td s-2b(\delta).
  \end{equation}
  By the energy estimate \eqref{weak_energy_est}, the following dissipative relation
  \begin{equation}
    \int_{-t_n}^0\int_\Omega g(\partial_t v_n)\partial_t v_n \td x \td s\le Q(\|\mr{B}_0\|_{\H}),\forall n\in \N
  \end{equation}
  holds, which yields $\int_{-t_n}^0e^{\al s}\int_\Omega g(\partial_t v_n)\partial_t v_n \td x\td s$ is uniformly bounded for any $\al \ge 0$.
  Subsequently, we take the limit $\alpha \to 0$ in \eqref{4.20} to derive
  \begin{equation}
    \int_{-\infty}^0\int_\Omega \tilde{g}v_t\td x\td s
    \ge \varliminf_{n\to\infty}\int_{-t_n}^0\int_\Omega g(\partial_t v_n)\partial_t v_n \td x\td s-2b(\delta).
  \end{equation}
  Then passing the limit $\delta\to 0$, we obtain
  \begin{equation}\label{4.23}
    \int_{-\infty}^0\int_\Omega \tilde{g}v_t\td x\td s\ge \varliminf_{n\to \infty}\int_{-t_n}^0\int_\Omega g(\p_t v_n)\p_t v_n\td x\td s.
  \end{equation}
  Base on this, the monotonicity argument of Lions\cite{Lions65} implies $\tilde{g}=g(v_t)$, i.e.
  \begin{equation}\label{4.24}
    g(\p_t v_n)\wto g(v_t) \text{ weakly in } L^{\frac{m+1}{m}}(\R\times\Omega).
  \end{equation}
  Following this, we conclude that $(v(t),\p_t v(t))=y(t)$ is a complete trajectory and the equality \eqref{4.11} will turn into
  \begin{equation}\label{4.15}
    E_\al(v(0))+\int_{-\infty}^0e^{\al s}\left(G_\al(v)+\Phi_\al(v)+N_\al(v)\right)\td s=0.
  \end{equation}


  Proceeding, we will prove the compactness of $\{v_n(0)\}$. For the first term in \eqref{newdef:G}, by \eqref{4.24}, we have
  \begin{equation}
    \begin{aligned}
        & \varliminf_{n\to\infty}\int_{-t_n}^0e^{\al s} \int_\Omega \left(g(\partial_t v_n)\partial_t v_n- g(v_t )v_t\right) \td x\td s \\
      = & \varliminf_{n\to\infty}\int_{-t_n}^0e^{\al s}\int_\Omega(g(\partial_t v_n)-g(v_t ))(\partial_t v_n-v_t)\td x\td s.
    \end{aligned}
  \end{equation}
  Since $g$ is increasing, then
  \begin{equation}\label{4.26}
    \varliminf_{n\to\infty}\int_{-t_n}^0e^{\al s} \int_\Omega g(\partial_t v_n)\partial_t v_n\td x\td s\ge \int_{-\infty}^0e^{\al s}\int_\Omega g(v_t )v_t \td x\td s.
  \end{equation}
  Now, let $\al$ be small enough such that $1-\al a(\delta)>0$. It then follows from \eqref{4.18} that
  \begin{equation}\label{eq4.19}
    \begin{aligned}
      \varliminf_{n\to\infty}\int_{-t_n}^0e^{\al s}G_\al(v_n)\td s & \ge (1-\al a(\delta))\varliminf_{n\to\infty}\int_{-t_n}^0e^{\al s}\int_\Omega g(\partial_t v_n)\partial_t v_n \td x\td s-b(\delta) \\
                                                                   & \ge (1-\al a(\delta))\int_{-\infty}^0e^{\al s}\int_\Omega g(v_t) v_t \td x\td s-b(\delta).
    \end{aligned}
  \end{equation}
  By \eqref{4.19}, we obtain
  \begin{equation}\label{eq4.20}
    \begin{aligned}
      \int_{-\infty}^0e^{\al s}\left(\int_\Omega  g(v_t) v_t\td x-G_\al(v)\right) \td s & \ge -\int_{-\infty}^0e^{\al s}\int_\Omega \alpha g(v_t)v\td x\td s                        \\
                                                                                        & \ge -\al a(\delta) \int_{-\infty}^0e^{\al s}\int_\Omega  g(v_t) v_t \td x\td s-b(\delta).
    \end{aligned}
  \end{equation}
  Substituting \eqref{eq4.20} into \eqref{eq4.19}, we get:
  \begin{equation}\label{eq4.22}
    \varliminf_{n\to\infty}\int_{-t_n}^0e^{\al s}G_\al(v_n)\td s \ge \int_{-\infty}^0e^{\al s} G_\al(v) \td s -2\al a(\delta)\int_{-\infty}^0e^{\al s}\int_\Omega  g(v_t) v_t \td x\td s-2b(\delta).
  \end{equation}
  Combining with inequalities \eqref{eq4.13} and \eqref{eq4.22} gives
  \begin{equation}\label{eq4.23}
    \begin{aligned}
          & \varliminf_{n\to \infty}  \int_{-t_n}^0e^{\al s}\left(G_\al(v_n(s))\td s+N_\al(v_n(s)) \td s + \Phi_\al(v_n(s))\right)\td s                                 \\
      \ge & \int_{-\infty}^0e^{\al s}\left(G_\al(v)+\Phi_\al(v)+N_\al(v)\right)\td s-2\al a(\delta)\int_{-\infty}^0e^{\al s}\int_\Omega g(v_t)v_t\td x\td s-2b(\delta).
    \end{aligned}
  \end{equation}
  Thus passing the lower limit as $n\to \infty$ in the equality \eqref{eq4.12}  yields
  \begin{equation}
    \begin{aligned}
      0  = & \varliminf_{n\to \infty} \left(E_\al(v_n(0))+\int_{-t_n}^0e^{\al s}\left(G_\al(v_n(s)) + N_\al(v_n(s))+\Phi_\al(v_n(s))\right)\td s\right) \\
      \ge  & \varliminf_{n\to \infty}  E_\al(v_n(0))+\int_{-\infty}^0e^{\al s}\left(G_\al(v)+\Phi_\al(v)+N_\al(v)\right)\td s                           \\
           & -2\al a(\delta)\int_{-\infty}^0e^{\al s}\int_\Omega g(v_t)v_t\td x\td s-2b(\delta).
    \end{aligned}
  \end{equation}
  Recalling that from \eqref{4.19}, we have
  \begin{equation}
    \int_{-\infty}^0e^{\al s}\left(G_\al(v)+\Phi_\al(v)+N_\al(v)\right)\td s=-E_\al(v(0)).
  \end{equation}
  In conclusion, the following inequality holds:
  \begin{equation*}
    E_\al(v(0))\le \varliminf_{n\to \infty}E_\al(v_n(0)) \le E_\al(v(0))+2\al a(\delta)\int_{-\infty}^0e^{\al s}\int_\Omega g(v_t)v_t\td x\td s+2b(\delta).
  \end{equation*}
  Note that $\int_{-\infty}^0 \int_\Omega g(v_t)v_t\td x\td s$ is finite, so we can take the limit $\alpha \to 0$ to derive
  \begin{equation} \label{new:23}
    E(v(0))\le \varliminf_{n\to \infty}E(v_n(0)) \le E(v(0))+2b(\delta).
  \end{equation}
  Since \eqref{new:23} holds for any $\delta>0$, passing the limit $\delta\to 0$, we obtain the convergence of energy
  \begin{equation*}
    \varliminf_{n\to \infty}E(v_n(0))=E(v(0)),
  \end{equation*}
  which implies that $$\varliminf_{n\to \infty}\|S(t_n)z_n\|_{\H}=\|y(0)\|_{\H}.$$ Recall that weak convergence $S(t_n)z_n\wto y(0)\text{~in~}\H$, therefore there exists a strong convergent subsequence $S(t_n)z_n\to y(0)$ strongly in $\H$. Thus the asymptotic compactness of the semigroup $S(t)$ is verified. The proof is complete.
\end{proof}

Finally,   our main result in this section on the existence of global attractors follows directly from Lemma \ref{le4.1} and Lemma \ref{le4.6}, and classical attractor existence theory.
\begin{thm}\label{th4.4}
  Let the assumptions of Theorem \ref{weak} hold. Then, the semigroup $\{S(t)\}_{t \ge 0}$ generated by the weak solution of problem \eqref{eq1} possesses a global attractor $\mr{A}$ in the phase space $\H$.
\end{thm}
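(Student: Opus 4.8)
The plan is to obtain the theorem as a direct application of the classical abstract criterion for the existence of a global attractor (see \cite{babin92,temam97}): a semigroup on a complete metric space possesses a global attractor provided it is a strongly continuous semigroup, it is dissipative (admits a bounded absorbing set), and it is asymptotically compact. All three ingredients have in fact already been assembled in the preceding subsections, so the proof reduces to checking the hypotheses and invoking the abstract result.

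First, $S(t)$ is well defined on $\H$: existence of a weak solution is Theorem \ref{weak}, uniqueness is Theorem \ref{unique}, and these together give the semigroup identities $S(0)=I$ and $S(t+s)=S(t)S(s)$. For each fixed $t\ge 0$ the continuous dependence estimate \eqref{ieq_cd} shows that $S(t):\H\to\H$ is (locally Lipschitz) continuous, while continuity in $t$ follows from the regularity $u\in C([0,T];H^1_0(\Omega))$, $u_t\in C([0,T];L^2(\Omega))$ built into the definition of weak solution; hence $\{S(t)\}_{t\ge 0}$ is a $C_0$-semigroup on $\H$. Second, dissipativity is precisely Lemma \ref{le4.1}, which produces a bounded absorbing set $\mathscr{B}_0\subset\H$ (a ball whose radius $Q(M,\|\phi\|)$ depends only on $\|\phi\|$ and the structural constants). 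Third, asymptotic compactness is Lemma \ref{le4.6}: for every $\{z_n\}\subset\mathscr{B}_0$ and every $t_n\to\infty$ the sequence $\{S(t_n)z_n\}$ has a subsequence converging strongly in $\H$.

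With these three properties in hand, the abstract attractor existence theorem applies and yields a global attractor
\begin{equation*}
  \mr{A}=\omega(\mathscr{B}_0)=\bigcap_{s\ge 0}\overline{\bigcup_{t\ge s}S(t)\mathscr{B}_0},
\end{equation*}
which is nonempty, compact in $\H$, fully invariant ($S(t)\mr{A}=\mr{A}$ for all $t\ge 0$), and attracts every bounded subset of $\H$; by the standard argument it also coincides with the union of all bounded complete trajectories of $S(t)$, a characterization that will be exploited in Section 5. No genuine obstacle remains at this stage: the real difficulty, namely establishing the convergence of energies in the presence of the nonlinear damping term of near-critical growth, was already resolved in the proof of Lemma \ref{le4.6} via the energy method of \cite{ball04,mrw98}, together with the a priori space–time estimate of Proposition \ref{priori}.
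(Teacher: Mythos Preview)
Your proof is correct and follows exactly the paper's approach: the paper's own proof is a single sentence invoking the abstract attractor existence theorem after Lemmas \ref{le4.1} and \ref{le4.6} (dissipativity and asymptotic compactness), together with the continuity established earlier, and you have simply spelled out these verifications in a bit more detail.
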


\section{Regularity of the global attractor}
In this section, we investigate the regularity of the global attractor $\mr{A}$ for the semigroup $\{S(t)\}_{t \ge 0}$ generated by the weak solution of problem \eqref{eq1}. To that end, the Assumption ({\bf G})  needs to be updated to include:
\begin{equation}\label{G2}
  0 <   \gamma\le g^\prime(s)\le c(1+g(s)s)^{2/3}.\tag{G2}
\end{equation}
It can be observed that
\begin{equation}\label{gs_gamma}
  \gamma s^{2}\le g(s)s~~\text{and  } |s|^{m+1}\le  C g(s)s \,\, \mbox{for some} \,\, C > 0.
\end{equation}

The regularity of the global attractor will be demonstrated in three steps. First, a uniform estimate of the strong solution is established in Lemma \ref{unif_strogn_est} below. Next,  partial regularity of bounded complete trajectories is proved in Lemma \ref{lem:pr} below. This is  followed by regularity estimates in $\V$ of the global attractor, which are shown in Lemma \ref{lem5.5} and Lemma \ref{lem5.6}. Finally,  the main result of this section is presented in Theorem \ref{regularity}.

\begin{lemma}[Uniform energy estimate of the strong solution]\label{unif_strogn_est}
  Assume that $\phi\in L^2(\Omega)$ and updated Assumption {\rm({\bf G})} with conditions \eqref{G1}-\eqref{G2} and Assumption {\rm({\bf F})} with conditions \eqref{F1}-\eqref{F3} hold. Then for any bounded subset $B$ of $\V$, the strong solutions are uniformly bounded in $\V$, i.e.,
  $$\sup_{t\ge0}\sup_{\varphi \in B}\|S(t)\varphi\|_{\V}< \infty.$$
\end{lemma}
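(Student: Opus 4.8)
The plan is to multiply the equation by $-\Delta u_t$ and run a Gronwall argument in $\V$, exactly as in the proof of Proposition \ref{strong} part (iii), but now tracking constants that are \emph{uniform in $t$} rather than merely finite on $[0,T]$. The key new ingredient that makes uniformity possible is condition \eqref{G3}: the lower bound $g'(s)\ge\gamma$ furnishes a genuine dissipation term $\gamma\|\nabla u_t\|^2$ in the energy identity, while the upper bound $g'(s)\le c(1+g(s)s)^{2/3}$ lets us control the ``bad'' term $\int_\Omega f''(u)|\nabla u|^2 u_t\,\td x$ by quantities that are integrable over $[t,t+1]$ with a bound independent of $t$.

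\medskip

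First I would recall from Lemma \ref{le4.1} that on the absorbing set (hence, after a transient, for any bounded $B\subset\V\subset\H$) the weak/strong solution satisfies $\sup_{t\ge0}\|(u(t),u_t(t))\|_{\H}\le C$ and, from \eqref{weak_energy_est} together with \eqref{gs_gamma}, the dissipation integral bound $\sup_{t\ge0}\int_t^{t+1}\int_\Omega g(u_s)u_s\,\td x\,\td s\le C$ and hence $\sup_{t\ge0}\int_t^{t+1}\|u_s\|_{m+1}^{m+1}\,\td s\le C$. Next, the a priori space-time estimate of Proposition \ref{priori}, applied on each unit interval $[t,t+1]$ with the uniform $R$ just obtained, gives $\sup_{t\ge0}\int_t^{t+1}\|u\|_{3k+3}^{k+1}\,\td s\le C$. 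These three uniform-in-$t$ bounds are the raw material for the Gronwall lemma (Lemma \ref{gronwall}), whose hypotheses are precisely of the form $\int_\tau^t|h_1|\le\al(t-\tau)+a$, $\sup_t\int_t^{t+1}|h_2|\le b$.

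\medskip

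Then I would differentiate the $\V$-energy. Multiplying \eqref{eq1} (justified on strong solutions, or on Galerkin approximations as in Proposition \ref{strong}) by $-\Delta u_t$ gives
\begin{equation*}
  \frac{\td}{\td t}E_\V(u(t))+\int_\Omega g'(u_t)|\nabla u_t|^2\,\td x+\int_\Omega f'(u)\nabla u\cdot\nabla u_t\,\td x=0,
\end{equation*}
with $E_\V(u)=\tfrac12\|\nabla u_t\|^2+\tfrac12\|\Delta u\|^2+\la\phi,\Delta u\ra$. Using $g'(u_t)\ge\gamma$ I keep a coercive term $\gamma\|\nabla u_t\|^2$; absorbing the $f'$-term as in \eqref{eq2.35}, I introduce the modified functional $E_{\V,\om}(u)=E_\V(u)+\tfrac12\int_\Omega f'(u)|\nabla u|^2\,\td x+K_\om\|\nabla u\|^2+2\|\phi\|^2$, which by \eqref{Ev_omega} is equivalent to $\tfrac14(\|\nabla u_t\|^2+\|\Delta u\|^2)$. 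The bad term $\int_\Omega f''(u)|\nabla u|^2 u_t\,\td x$ is estimated exactly as in \eqref{fpp_du_ut} by $H(u(t))\,E_{\V,\om}(u(t))$ with $H(u)=\|u_t\|+1+\|u\|_{3k+3}^{k+1}+C(1+\|u\|_6^{(p-2)(m+1)})\|u_t\|_{m+1}^{m+1}$; by the three uniform bounds above, $\sup_{t}\int_t^{t+1}H(u(s))\,\td s\le C$. To close the argument I need to extract a \emph{dissipation} $2\al E_{\V,\om}$ on the left-hand side: here I would use the coercivity $\gamma\|\nabla u_t\|^2$ plus a multiplier trick (adding a small multiple of $\la\nabla u,\nabla u_t\ra$ or $\la\Delta u,u_t\ra$ to the functional, as in the standard wave-equation dissipativity computation) to produce $+2\al\,\|\Delta u\|^2$ as well, so that
\begin{equation*}
  \frac{\td}{\td t}E_{\V,\om}(u(t))+2\al E_{\V,\om}(u(t))\le H(u(t))\,E_{\V,\om}(u(t))+C_\om .
\end{equation*}
Lemma \ref{gronwall}, with $h_1=H$ (whose running integral is controlled: $\int_\tau^t H\le C(t-\tau)+C$, and by shrinking $\al$ or enlarging the interval this fits $\al(t-\tau)+a$) and $h_2\equiv C_\om$, then yields $E_{\V,\om}(u(t))\le \mu|E_{\V,\om}(u(0))|e^{-\al t}+\rho$, i.e. a uniform-in-$t$ bound on $\|(u(t),u_t(t))\|_\V$ depending only on $\|B\|_\V$.

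\medskip

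\textbf{The main obstacle} I anticipate is making the ``extract $2\al E_{\V,\om}$'' step rigorous while the running integral of $H$ is only \emph{ultimately} bounded by a linear function of $t$: the constant $\al$ in the exponential decay must be chosen small enough that $\int_\tau^t H(s)\,\td s\le \al(t-\tau)+a$ holds, which requires the \emph{average} of $H$ over long intervals to be strictly below $\al$ — but the uniform unit-interval bound only says $\int_t^{t+1}H\le C$, not that $C$ is small. The honest fix is the two-stage argument used elsewhere in this literature: first prove that a large \emph{ball} in $\V$ (of radius controlled by $\|B\|_\V$) is \emph{forward invariant} in finite time via the finite-time Gronwall estimate (no smallness needed), giving $\sup_{0\le t\le T}\|(u,u_t)\|_\V\le Q_T(\|B\|_\V)$; and then, once we are inside the absorbing set, run the decaying Gronwall with $\om$ and $\al$ tuned against the now-\emph{absolute} constants in the $\H$-dissipation and space-time estimates. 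One must also verify that condition \eqref{G3} is genuinely used only through $g'\ge\gamma$ and the resulting $\|u_t\|_{m+1}^{m+1}\le Cg(u_t)u_t$; the upper bound in \eqref{G3} plays no role here (it is needed later, for the difference estimates in Step 2), so I would flag that and not invoke it. Everything else — the Sobolev embeddings $H^1_0\hookrightarrow L^6$, the interpolation Lemma \ref{inter_ineq} with the exponent checks $\lambda=(3p-6)/2,\ \mu=q=3/2$ already verified in Proposition \ref{strong} — is routine and I would not reproduce the computations.
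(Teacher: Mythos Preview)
Your overall architecture is right, and you correctly isolate the real difficulty (getting $\int_\tau^t h_1\le \al(t-\tau)+a$ with a \emph{small} linear coefficient). But there are two genuine gaps, and your proposed fixes do not close them.

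\textbf{The upper bound in \eqref{G3} is not optional here.} The moment you add the cross term $\al\langle\nabla u,\nabla u_t\rangle$ to the functional (or equivalently multiply \eqref{eq1} by $-\Delta u_t-\al\Delta u$, which is what the paper does), you generate
\[
\al\langle\Delta u,\,g(u_t)\rangle=-\al\int_\Omega g'(u_t)\,\nabla u\cdot\nabla u_t\,\td x,
\]
and after Cauchy this leaves you with $\tfrac{\al}{2}\int_\Omega g'(u_t)|\nabla u|^2\,\td x$. Under \eqref{G1} alone, $g'(u_t)$ can be of order $|u_t|^{m-1}$ with $m$ up to $5$, and $\int|u_t|^{4}|\nabla u|^2\le C\|\nabla u_t\|^4\|\Delta u\|^2$ is quartic in the $\V$-norm and cannot be absorbed. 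The paper uses precisely the upper bound $g'(s)\le c(1+g(s)s)^{2/3}$ to get
\[
\int_\Omega g'(u_t)|\nabla u|^2\,\td x\le C\|\nabla u\|^2+\Bigl(\tfrac12+C\!\int_\Omega g(u_t)u_t\,\td x\Bigr)\|\Delta u\|^2,
\]
and then exploits $\int_0^\infty\!\!\int_\Omega g(u_t)u_t\,\td x\,\td t\le Q(\|B\|_\H)$. So this term feeds into $h_1$ as well, and your flag ``the upper bound plays no role here'' should be removed.

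\textbf{The Gronwall closure is via the free parameter $\theta$, not a two-stage argument.} Your $H$ is copied from \eqref{fpp_du_ut}, where the constants are fixed; the paper instead invokes Lemma~\ref{inter_ineq} \emph{with the small parameter $\theta$ retained}, so that
\[
H(t)=\theta+\theta\|u\|_{3k+3}^{k+1}+C_\theta\bigl(1+\|u\|_6^{(p-2)(m+1)}\bigr)\|u_t\|_{m+1}^{m+1}.
\]
The point is that the $C_\theta$-term has \emph{finite total integral over $[0,\infty)$} (from the global dissipation bound), while the potentially linearly-growing part carries the coefficient $\theta$. Choosing $\theta=\al/\bigl(8\,Q(\|B\|_\H+\|\phi\|)\bigr)$ gives exactly $\int_{t_1}^{t_2}H\le\tfrac{\al}{8}(t_2-t_1)+Q$, and Lemma~\ref{gronwall} applies. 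Your ``two-stage'' workaround does not help: being in the $\H$-absorbing set makes the constants absolute but not small, and shrinking $\al$ works against you since $\al$ is the decay rate you must beat. Replace the two-stage paragraph by the $\theta$-tuning, and also include $C\!\int_\Omega g(u_t)u_t\,\td x$ (from the previous point) inside $h_1$; both corrections are what the paper actually does.
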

\begin{proof}
  To derive the uniform bound, multiply equation \eqref{eq1} by $-\Delta u_t-\al \Delta u$. However, since $-\Delta u_t$ belongs to $H^{-1}$,  it cannot directly serve as a test function. Therefore, a formal derivation of the a priori estimate is presented next, which is justified using the Galerkin approximation method described in Theorem \ref{strong}.

  Multiplying equation \eqref{eq1} by $-\Delta u_t-\alpha\Delta u$ and integrating over $\Omega$, the following equality is obtained:

  \begin{equation}
    \begin{aligned}
       & \frac{\td}{\td t}\left(\frac12\|\nabla u_t\|^2+\frac12\|\Delta u\|^2-\int_\Omega \phi\Delta u\td x+\al\int_\Omega \nabla u_t\nabla u\td x+\frac12\int_\Omega f^\prime(u)|\nabla u|^2\td x\right) \\
       & +\int_\Omega g^{\prime}(u_t)|\nabla u_t|^2 \td x+\al \|\Delta u\|^2-\al\|\nabla u_t\|^2 +\al\int_\Omega g^{\prime}(u_t)\nabla u_t\nabla u\td x                                                   \\
       & -\int_\Omega f^{\prime\prime}(u)|\nabla u|^2 u_t\td x+\al\int_\Omega f^\prime(u)|\nabla u|^2\td x+\al\int_\Omega \phi \Delta u\td x=0.
    \end{aligned}
  \end{equation}
  Since $f^{\prime}(s)\ge -\omega s^4-K_\omega$, we can rewrite above equality as follows
  \begin{equation}\label{du}
    \begin{aligned}
       & \frac{\td}{\td t}\Phi(u)+\al\Phi(u)+\frac\al2 \|\Delta u\|^2-\frac{3\al}{2}\|\nabla u_t\|^2 +\int_\Omega g^{\prime}(u_t)|\nabla u_t|^2 \td x+\al\int_\Omega g^{\prime}(u_t)\nabla u_t\nabla u\td x \\
       & -\al^2\int_\Omega \nabla u_t\nabla u\td x-\frac12\int_\Omega f^{\prime\prime}(u)|\nabla u|^2 u_t\td x+\frac\al2\int_\Omega f^\prime(u)|\nabla u|^2\td x=2\al\|\phi\|^2,
    \end{aligned}
  \end{equation}
  where
  \begin{equation*}
    \Phi(u)=\frac12\|\nabla u_t\|^2+\frac12\|\Delta u\|^2-\int_\Omega \phi\Delta u\td x+\al\int_\Omega \nabla u_t\nabla u\td x+\frac12\int_\Omega f^\prime(u)|\nabla u|^2\td x+\frac{K_\omega}2\|\nabla u\|^2+2\|\phi\|^2.
  \end{equation*}
  By choosing $\omega>0$ and $\al>0$ both sufficiently small,  and using the energy estimate \eqref{weak_energy_est}, it follows that
  \begin{equation*}
    \frac14\|\nabla u_t\|^2+ \frac14\|\Delta u\|^2 \le \Phi(u)\le C_1(\|\nabla u_t\|^2+ \|\Delta u\|^2+Q(\|B\|_\H+\|\phi\|)).
  \end{equation*}
  Each term in equality \eqref{du} is analyzed as follows.

  First, by the Cauchy inequality, it is obtained that
  \begin{equation}\label{g'u_tu}
    \begin{aligned}
      \left|\int_\Omega g^{\prime}(u_t)\nabla u_t\nabla u\td  x\right| & \le \frac12\int_\Omega g^\prime(u_t)|\nabla u_t|^2\td x+\frac{1}2\int_\Omega g^\prime(u_t)|\nabla u|^2\td x.
    \end{aligned}
  \end{equation}
  Condition \eqref{G2} and H\"older inequality imply that
  \begin{equation}\label{g'du2}
    \begin{aligned}
      \int_\Omega g^\prime(u_t)|\nabla u|^2\td x & \le C_2\int_\Omega \left(1+(g(u_t)u_t)^{2/3}\right)|\nabla u|^2\td x                     \\
                                                 & \le C_2\|\nabla u\|^2+C_2\left(\int_\Omega g(u_t)u_t\td x\right)^{2/3} \|\nabla u\|_6^2.
    \end{aligned}
  \end{equation}
  By Jensen's inequality and Sobolev's embedding
  \begin{equation}\label{5.6}
    \begin{aligned}
        C_2\left(\int_\Omega g(u_t)u_t\td x\right)^{2/3} \|\nabla u\|_6^2 &\le \left(\frac12+C\int_\Omega g(u_t)u_t\td x\right)\|\Delta u\|^2\\
        &\le \frac12\|\Delta u\|^2+C\int_\Omega g(u_t)u_t\td x\Phi(u).
    \end{aligned}
  \end{equation}
  Recall that $g^{\prime}(s)\ge \gamma$, therefore
  \begin{equation}\label{g'}
    \begin{aligned}
          & \int_\Omega g^{\prime}(u_t)|\nabla u_t|^2 \td x+\al\int_\Omega g^{\prime}(u_t)\nabla u_t\nabla u\td x              \\
      \ge & \gamma(1-\frac\al2)\|\nabla u_t\|^2-\frac\al4\|\Delta u\|^2-C\al\int_\Omega g(u_t)u_t\td x\Phi(u)-C\|\nabla u\|^2.
    \end{aligned}
  \end{equation}
  Similar to \eqref{fpp_du_ut}, by Lemma \ref{inter_ineq},   for any $\theta>0$, there exists a constant $C_\theta>0$ such that
  \begin{equation}\label{eq5.6}
    \begin{aligned}
      \int_\Omega f^{\prime\prime}(u)|\nabla u|^2 u_t\td x & \le \left(\theta+\theta\|u\|^{k+1}_{3k+3}+C_{\theta}\left(1+\|u\|^{(p-2)(m+1)}_{6}\right)\|u_t\|^{m+1}_{m+1}\right)\|\Delta u\|^2 \\
                                                           & \le 4\left(\theta+\theta\|u\|^{k+1}_{3k+3}+C_{\theta}\left(1+\|u\|^{(p-2)(m+1)}_{6}\right)\|u_t\|^{m+1}_{m+1}\right)\Phi(u)       \\
                                                           & :=H(t)\Phi(u).
    \end{aligned}
  \end{equation}

  Substituting inequalities \eqref{g'}-\eqref{eq5.6} into equality \eqref{du}, the following inequality is obtained:
  \begin{equation}\label{}
    \frac{\td}{\td t}\Phi(u)+\al\Phi(u)\le \left(C\al\int_\Omega g(u_t)u_t\td x+\frac12H(t)\right) \Phi(u)+Q(\|B\|_{\H}+\|\phi\|).
  \end{equation}
  From the energy estimate \eqref{weak_energy_est} it follows that
  \begin{equation}
    \int_{t_1}^{t_2}H(t)\td t\le \theta (t_2-t_1)Q(\|B\|_{\H}+\|\phi\|)+Q(\|B\|_{\H}+\|\phi\|), ~~ \forall \,\,  t_1, t_2 \in [0, \infty).
  \end{equation}
  Now choose $\theta=\al/Q(\|B\|_\H+\|\phi\|)$. Since $\int_{0}^{+\infty} \int_\Omega g(u_t)u_t\td x\td t\le Q(\|B\|_\H+\|\phi\|)$,  it then follows that
  \begin{equation}
    \int_{t_1}^{t_2}\left(C\al\int_\Omega g(u_t)u_t\td x+H(t)\right)\td t\le \al(t_2-t_1)+Q(\|B\|_\H+\|\phi\|).
  \end{equation}
  Finally, applying Gronwall's Lemma to inequality \eqref{eq5.6} implies that
  \begin{equation}\label{Phi}
    \Phi(u(t))\le Q(\|B\|_\H+\|\phi\|)|\Phi(u(0))|e^{-\al t/2}+Q(\|B\|_\H+\|\phi\|),~~ \text{for any } t\ge 0,
  \end{equation}
  which completes the proof.
\end{proof}

\begin{lemma}[Partial regularity of bounded complete trajectories]\label{lem:pr}
  Under the same assumptions as in Lemma \ref{unif_strogn_est}, for any trajectory $U(t)=(u(t), u_t(t))$ in $\mathscr{A}$, we have
  \begin{equation*}\label{part_reg}
    \|u_{tt}(t)\|+\|\nabla u_t(t)\| \le Q(\|\mathscr{A}\|_{\H})~~\text{for all }t\in\R,
  \end{equation*}
  which implies that the global attractor $\mathscr{A}$ is bounded in $H^1_0(\Omega)\times H^1_0(\Omega)$.
\end{lemma}
\begin{proof}
  To establish the partial regularity,   the concept of difference quotients is utilized instead of directly differentiating equation \eqref{eq1}. Define the difference quotient as follows:
  $$D_h u(t)=\dfrac{u(t+h)-u(t)}{h} ~~ \text{for } h > 0.$$
  The function $D_h u(t)$ satisfies the following differential equation:
  \begin{equation}\label{dhu}
    \begin{cases}
       & (D_h u)_{tt} -\Delta D_h u +f_1 (u(t),h)D_h u +g_1 (u_t(t),h)(D_h u)_t = 0 ~~ \text{in } \Omega\times(t_0,\infty), \\
       & D_h u = 0 ~~ \text{on } \partial \Omega \times (t_0,\infty) ~~\text{for } t_0\in \R,
    \end{cases}
  \end{equation}
  where
  \begin{eqnarray*} f_1(u(t),h) &=& \int_0^1 f^\prime(su(t+h)+(1-s)u(t))\td s , \\
    g_1 (u_t ,h) &=& \int_0^1 g^\prime(su_t(t+h) + (1-s)u_t(t))\td s.
  \end{eqnarray*}

  By multiplying both sides of \eqref{dhu} by $D_h u_t+ \al D_h u$ and integrating over $\Omega$, a similar proof to that of \eqref{Phi} on $[t_0, t]$ is applied to obtain
  \begin{equation*}
    \begin{aligned}
      \|D_h u_{t}(t)\|^2+\|\nabla D_h u(t)\|^2
      \le  Q(\|\mathscr{A}\|_{\H})\left(\|D_h u_{t}(t_0)\|^2+\|\nabla D_h u(t_0)\|^2\right)e^{\frac{\al (t_0-t)}{8}}+Q(\|\mathscr{A}\|_{\H}),
    \end{aligned}
  \end{equation*}
  for any $t\ge t_0$ and for some $\al>0$. Since for any fixed $h>0$,
  $$\|D_h u_{t}(t_0)\|^2+\|\nabla D_h u(t_0)\|^2\le Q(\|\mathscr{A}\|_{\H})/h^2 ~~ \text{uniformly for any  } t_0\in \R,$$
  letting $t_0\to -\infty$ gives
  \begin{equation*}
    \|D_h u_{t}(t)\|^2+\|\nabla D_h u(t)\|^2\le Q(\|\mathscr{A}\|_{\H}) ~~\text{for any } h>0 \text{~and } t\in \R.
  \end{equation*}
  Consequently, a standard convergence argument gives
  \begin{equation*}
    \|u_{tt}(t)\|+\|\nabla u_t(t)\| \le Q(\|\mathscr{A}\|_{\H})~~\text{for all }t\in\R.
  \end{equation*}
  This demonstrates that the trajectory $U(t) = (u(t), u_t(t))$ in the global attractor $\mathscr{A}$  maintains partial regularity, and completes the proof.
\end{proof}

To establish the regularity estimate in $\V$ of the global attractor, the cutting-off method of Khanmamedov \cite{khanmamedov10} is employed and refined. More precisely,
Since $\mathscr{A} = S(t)\mathscr{A}$ for all $t > 0$, for any point $(u_0, u_1)\in \mathscr{A}$, there exists a trajectory $z(t)$ passing through this point  such that $z(t)=(u(t), u_t(t))\in \mathscr{A}$ for all $t\in R$. Given that $\mathscr{A}$ is compact, there exists a decreasing negative sequence $\{t_m\}$ with $t_m\to -\infty$ and a point $z_0=(\bar{u}_0, \bar{u}_1)\in \mathscr{A}$, such that $z(t_m)\to z_0$ in $\H$ as $m\to \infty$. Consequently,  $z(t+t_{m})=S(t)z(t_m)\to S(t)z_0$ for any $t\ge 0$.

Moreover, the dynamical system $(S(t), \H)$ is a gradient system with a strict Lyapunov functional given by
$$\mathcal{L}(z(t))=\frac12\|u_t\|^2+\frac12\|\nabla u\|^2+\int_\Omega F(u)\td x-\int_\Omega \phi u\td x.$$
It  then follows  that
$$\mathcal{L}(z_0)=\lim_{m\to\infty}\mathcal{L}(z(t_{m}))=\lim_{m\to\infty}\mathcal{L}(z(t+t_{m}))=\mathcal{L}(S(t)z_0)~~ \text{for any  } t\ge 0,$$
which implies that $z_0$ is an equilibrium point of semigroup $S(t)$. Thus $\bar{u}_0$ satisfies the elliptic equation $-\Delta \bar{u}_0+f(\bar{u}_0)=\phi$ and $\bar{u}_1=0$. It is evident that the set of equilibrium points of semigroup $S(t)$
is bounded in $\V$.

Next, introduce the decomposition $u(t)=w_n(t)+v_n(t)+\bar{u}_0(x)$, where

\begin{equation}\label{wn}
  \begin{cases}
    \partial_t^2 w_n+g( \partial_{t} w_n)-\Delta w_n+f_n(u)-f(\bar{u}_0)=0,~~\text{in }(t_m,\infty)\times\Omega, \\
    w_n=0,~~\text{on }(t_m,\infty)\times \partial\Omega,                                                         \\
    w_n(t_m,x)=0,~~ \partial_{t} w_n(t_m,x)=0,~~\text{in }\Omega,
  \end{cases}
\end{equation}
and
\begin{equation}\label{vn}
  \begin{cases}
    \partial_t^2 v_n+g(u_t)-g(\partial_{t} w_n)-\Delta v_n+f(u)-f_n(u)=0,~~\text{in }(t_m,\infty)\times\Omega, \\
    v_n=0,~~\text{on }(t_m,\infty)\times\partial\Omega,                                                        \\
    v_n(t_m,x)=u(t_m,x)-\bar{u}_0(x),~~ \partial_{t} v_n(t_m,x)=u_t(t_m,x),~~\text{in }\Omega.
  \end{cases}
\end{equation}
Here, $f_n(s)$ is defined as
$$f_n(s)=\begin{cases}
    f(n),~~ s\geq n, \\
    f(s),~~ |s|<n,   \\
    f(-n),~~ s\leq-n,
  \end{cases}
  n\in\mathbb{N}.$$

This decomposition and the corresponding boundary conditions facilitate the derivation of regularity estimates for the components $w_n$ and $v_n$, thereby establishing the desired regularity estimate in $\V$ for the global attractor in the sequel.

\begin{lemma}\label{lem5.5}
  Under the same assumptions as in Lemma \ref{unif_strogn_est},  we have $(w_{n}(t), \partial_{t} w_n(t)) \in \V$. Moreover,  for any $n\in \mathbb{N}$ there exists a time
  $T_n < 0$ such that
  \begin{equation}\label{wH2}
    \|\nabla \partial_{t} w_n(t)\|+\|\Delta w_{n}(t)\| \leq r_{0}n^2\sup_{t_m\le s\le T_n}\|\nabla v_n(s)\|+r_0,~~\forall t_m\leq t\leq T_{n},
  \end{equation}
  where the positive constant $r_0$ is independent of $m$, $n$ and $(u_0, u_1)$.
\end{lemma}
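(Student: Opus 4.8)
The plan is to treat \eqref{wn} as a \emph{linearly} damped wave equation for $w_n$ — linear in $w_n$, with the monotone damping $g(\partial_t w_n)$ and the time-dependent source $h_n(t):=f(\bar u_0)-f_n(u(t))$ — and to run for it the $\V$-level energy estimate of Lemma \ref{unif_strogn_est}, exploiting that $f_n$ is globally bounded and Lipschitz, so $h_n\in C([t_m,\infty);L^2(\Omega))$ with $\sup_t\|h_n(t)\|$ a constant (of polynomial order in $n$), and that \eqref{wn} starts from the \emph{zero} datum. Since $u\in C(\R;H^1_0(\Omega))$ and, as recorded just before the lemma, the equilibrium $\bar u_0$ lies in $\V$ (hence in $L^\infty(\Omega)$ with a universal bound), and $(w_n(t_m),\partial_t w_n(t_m))=(0,0)\in\V$, the Faedo--Galerkin scheme of Proposition \ref{strong} — here much simplified, as \eqref{wn} carries no superlinear nonlinearity in $w_n$ — produces a unique strong solution, so $(w_n(t),\partial_t w_n(t))\in\V$ for all $t\ge t_m$; this is the first assertion, and it also legitimizes the formal multiplications below on the Galerkin level.

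\textbf{Step 1 ($\H$-bounds).} Test \eqref{wn} with $\partial_t w_n+\al w_n$ as in Lemma \ref{le4.1}. For $n$ exceeding the universal bound on $\|\bar u_0\|_{L^\infty}$ one has $f_n(\bar u_0)=f(\bar u_0)$, so $-h_n=\bigl(f_n(u)-f_n(w_n+\bar u_0)\bigr)+\bigl(f_n(w_n+\bar u_0)-f_n(\bar u_0)\bigr)$; by monotonicity of $f_n$, the last bracket paired with $\partial_t w_n$ equals $\tfrac{\td}{\td t}G_n(w_n)$ with $G_n(w_n)=\int_\Omega\!\int_0^{w_n}\bigl(f_n(\bar u_0+s)-f_n(\bar u_0)\bigr)\,\td s\,\td x\ge0$, hence moved into the energy, while the first bracket equals $f_n'(\xi)v_n$ pointwise, of $L^2$-norm $\le\|f_n'\|_{L^\infty}\|v_n\|$. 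Controlling the damping with $g(s)s\ge\ga s^2$ from \eqref{gs_gamma} and $|g(s)|^{(m+1)/m}\le c(\de)g(s)s+\de$ from \eqref{g_delta}, and choosing $\al,\de$ small, we obtain, uniformly in $m$ and $t\ge t_m$ (the homogeneous term dropping out since the datum vanishes),
\begin{equation*}
  \|(w_n(t),\partial_t w_n(t))\|_{\H}^2+\int_{t_m}^{t}\!\!\int_\Omega g(\partial_t w_n)\partial_t w_n\,\td x\,\td s\le C_n\sup_{t_m\le s\le t}\|\nabla v_n(s)\|^2+C_n,
\end{equation*}
together with the bound on $\|w_n\|_{L^{k+1}(t,t+1;L^{3k+3})}$ from Proposition \ref{priori}; here $C_n$ depends on $n$ but not on $m$ or $(u_0,u_1)$, and $\sup\|\nabla v_n\|$ is kept as an (as yet) free quantity on the right. (For $n\le\|\bar u_0\|_{L^\infty}$ the same works, $f_n(\bar u_0)-f(\bar u_0)$ being a bounded function absorbed into constants.)

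\textbf{Step 2 ($\V$-bounds).} Test \eqref{wn} with $-\Delta\partial_t w_n-\al\Delta w_n$, proceeding exactly as for \eqref{du}. Splitting $\nabla f_n(u)=f_n'(u)\bigl(\nabla w_n+\nabla v_n+\nabla\bar u_0\bigr)$, the piece $f_n'(u)\nabla w_n$ is moved into the modified functional $\Psi(w_n)=\tfrac12\|\nabla\partial_t w_n\|^2+\tfrac12\|\Delta w_n\|^2+\al\langle\nabla\partial_t w_n,\nabla w_n\rangle+\tfrac12\int_\Omega f_n'(u)|\nabla w_n|^2\,\td x$ through $\int_\Omega f_n'(u)\nabla w_n\cdot\nabla\partial_t w_n\,\td x=\tfrac{\td}{\td t}\tfrac12\int_\Omega f_n'(u)|\nabla w_n|^2\,\td x-\tfrac12\int_\Omega f_n''(u)u_t|\nabla w_n|^2\,\td x$, the last integral being estimated by Lemma \ref{inter_ineq} as in \eqref{eq5.6} — crucially with a coefficient $H(t)$ controlled purely by the trajectory (through $\|u\|_{3k+3}$, $\|u\|_6$, $\|u_t\|_{m+1}$, $\|u_t\|$), so that by \eqref{space-time}, \eqref{weak_energy_est} and the finiteness of $\int_\R\int_\Omega g(u_t)u_t\,\td x\,\td t$ along the complete trajectory one has $\int_\tau^t H\le\tfrac\al8(t-\tau)+a$ with $a$ universal after fixing $\theta$ in Lemma \ref{inter_ineq} in terms of $\|\mathscr{A}\|_{\H}$. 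Since $f_n'(s)\ge-\om s^4-K_\om$ is inherited from \eqref{F3}, a small $\om$ together with the Step 1 bound on $\|\nabla w_n\|$ gives $\Psi(w_n)\ge\tfrac14\bigl(\|\nabla\partial_t w_n\|^2+\|\Delta w_n\|^2\bigr)-C_n\sup\|\nabla v_n\|^2-C_n$; by \eqref{G3}, $\int g'(\partial_t w_n)|\nabla\partial_t w_n|^2\ge\ga\|\nabla\partial_t w_n\|^2$ absorbs $-\al\|\nabla\partial_t w_n\|^2$ and, together with $g'\le c(1+(g(\partial_t w_n)\partial_t w_n)^{2/3})$, the cross term $\al\int g'(\partial_t w_n)\nabla\partial_t w_n\cdot\nabla w_n$ (cf.\ \eqref{g'du2}--\eqref{5.6}). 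The residual — the terms carrying $f_n'(u)\nabla v_n$ and $\bigl(f_n'(u)-f'(\bar u_0)\bigr)\nabla\bar u_0$ — is handled by Hölder's inequality, the $L^\infty$-bounds on $f_n,f_n'$ (a power of $n$, recorded as $n^2$ in \eqref{wH2}), the $\V$-bound on $\bar u_0$, and Step 1, giving altogether
\begin{equation*}
  \frac{\td}{\td t}\Psi(w_n)+\frac\al4\Psi(w_n)\le H(t)\Psi(w_n)+r_0n^2\sup_{t_m\le s\le t}\|\nabla v_n(s)\|^2+r_0.
\end{equation*}
Applying Lemma \ref{gronwall} with $\Phi=\Psi(w_n)$ and using $\Psi(w_n(t_m))=0$ yields $\Psi(w_n(t))\le r_0n^2\sup_{[t_m,t]}\|\nabla v_n\|^2+r_0$; restricting to $t\le T_n$ and invoking the coercivity of $\Psi$ then gives \eqref{wH2}, after enlarging $r_0$ (which stays independent of $m,n,(u_0,u_1)$).

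\textbf{Main obstacle.} The delicate point is the bookkeeping in Step 2: the nonlinear contribution must be split so that the $w_n$-part is \emph{genuinely} absorbed — which is why the time-integration-by-parts identity and Lemma \ref{inter_ineq} are invoked, and why it matters that the resulting coefficient $H(t)$ depends only on $u\in\mathscr{A}$ (so that $\int_\tau^tH$ does not outgrow $\tfrac\al8(t-\tau)$, the global-in-time bound on $\int_\R\int_\Omega g(u_t)u_t\,\td x\,\td t$ along the trajectory being essential here) — while only the $v_n$-part and the bounded equilibrium term are allowed to survive, producing $n^2\sup\|\nabla v_n\|^2$ and the additive $r_0$; keeping the rate $\al/4$ and the constant $r_0$ free of $n,m,(u_0,u_1)$ is precisely what forces the window $[t_m,T_n]$ in the statement. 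The supporting points — justifying the $-\Delta\partial_t w_n$ multiplier via Galerkin (as in Proposition \ref{strong}), the $\H$-level estimate of Step 1, and the a.e. chain rule for the Lipschitz truncation $f_n$ — are routine.
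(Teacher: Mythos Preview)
Your overall architecture — an $\H$-level estimate followed by testing with $-\Delta\partial_t w_n-\al\Delta w_n$ — is the paper's, but the proof as written has a genuine gap: you never construct $T_n$, and without the right choice of $T_n$ the constant $r_0$ cannot be kept independent of $n$.

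The paper's engine is the explicit choice of $T_n$. From $g(s)s\ge\ga s^2$ one has $\int_{\R}\|u_t\|^2\,\td t<\infty$ along the complete trajectory, so one may pick $T_n<0$ with $n^{8}\int_{-\infty}^{T_n}\|u_t\|^2\,\td t\le 1$ (and $M_n$ with $n^{2}\|\bar u_0-u(t_m)\|_6\le 1$ for $m>M_n$). This is then exploited twice. First, the paper does \emph{not} run your Step~1 multiplier; instead it \emph{differentiates \eqref{wn} in time} and tests with $\partial_t^2 w_n$, which converts the source into $\|f_n'(u)u_t\|^2\le Cn^{8}\|u_t\|^2$, whose time integral on $[t_m,T_n]$ is $\le 1$. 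This yields $\|\partial_t^2 w_n\|+\|\nabla\partial_t w_n\|\le r_1$ and hence $\|\nabla w_n\|\le r_2$ with $r_1,r_2$ \emph{independent of $n$}. Your Step~1 gives only $\|(w_n,\partial_t w_n)\|_{\H}\le C_n\sup\|\nabla v_n\|+C_n$ with $C_n$ growing in $n$, and you then feed this $C_n$ into the coercivity of $\Psi$; the final additive constant inherits the $n$-dependence.

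Second, and more structurally, your splitting of the nonlinearity against $-\Delta\partial_t w_n$ is the wrong one. You write $\nabla f_n(u)=f_n'(u)(\nabla w_n+\nabla v_n+\nabla\bar u_0)$ and move only the $\nabla w_n$ piece into $\Psi$ via $\tfrac{\td}{\td t}\tfrac12\int f_n'(u)|\nabla w_n|^2$. The leftover residuals $\int f_n'(u)\nabla v_n\cdot\nabla\partial_t w_n$ and $\int(f_n'(u)-f'(\bar u_0))\nabla\bar u_0\cdot\nabla\partial_t w_n$ are paired with $\nabla\partial_t w_n\in L^2$ only; after Young they contribute an additive $Cn^{\beta}$ (some $\beta>0$) that no window $[t_m,T_n]$ removes. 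The paper instead performs time-IBP on the \emph{whole} term, moving $\int f_n'(u)\nabla u\cdot\nabla w_n$ into $\Phi_n$ and leaving the single residual
\[
\int_\Omega f_n'(u)\,u_t\,\Delta w_n\,\td x\;\le\;\tfrac{\al}{4}\|\Delta w_n\|^2+C_\al n^{8}\|u_t\|^2,
\]
whose $n^{8}\|u_t\|^2$ integrates to $\le 1$ on $[t_m,T_n]$, so the Gronwall additive constant is genuinely universal. The $\|\nabla v_n\|^2$ contribution with its polynomial-in-$n$ prefactor then appears only in the \emph{lower bound} of $\Phi_n$, through $\int f_n'(u)\nabla v_n\cdot\nabla w_n\le \|f_n'(u)\|_3\|\nabla v_n\|\|\nabla w_n\|_6$ — crucially paired with $\nabla w_n\in L^6$ (controlled by $\|\Delta w_n\|$), not with $\nabla\partial_t w_n$. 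That asymmetry is exactly what your split loses.

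In short: you need to (a) define $T_n$ via the smallness of $\int_{-\infty}^{T_n}\|u_t\|^2$, (b) obtain the $n$-uniform bound $\|\nabla w_n\|\le r_2$ by differentiating in time rather than by the $\H$-multiplier, and (c) reorganize the $\V$-level nonlinear term so that the only residual against the dissipation is $f_n'(u)u_t\,\Delta w_n$, not $f_n'(u)\nabla v_n\cdot\nabla\partial_t w_n$.
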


\begin{proof}
  The proof consists of three parts: in the first part an $H^1_0(\Omega)\times H^1_0(\Omega)$ estimate for $w_n(t)$ is derived,  in the second part a dissipativity relation is established, and in the last part an $H^2(\Omega)$ estimate of $w_n(t)$ is constructed.\\
  (i) {\it $H^1_0(\Omega)\times H^1_0(\Omega)$ estimate.}  \quad   Differentiating equation \eqref{wn} with respect to $t$ and multiplying both sides of the derived equation by $\partial_t^2 w_n$, integrating over $(t_m, T)\times \Omega$ yields
  \begin{equation}\label{5.19}
    \begin{aligned}
          & \|\partial_t^2 w_n(t)\|^2+\|\nabla \partial_t w_{n}(t)\|^2                                                                           \\
      \le & C_1\left(\|\partial_t^2 w_n(t_m)\|^2+\|\nabla \partial_t w_{n}(t_m)\|^2+\int_{t_m}^{t}\|f_n^{\prime}(u(t))u_t(t)\|^{2}\td t+1\right) \\
      \le & C_1\left(\|\partial_t^2 w_n(t_m)\|^2+n^{8}\int_{t_m}^{t}\|u_{t}(t)\|^{2}\td t+1\right),~~ \forall t\ge t_m.
    \end{aligned}
  \end{equation}
  From the equation \eqref{wn} for $w_n$,  it follows that  $\partial_t^2 w_n(t_m)=-g(0)+f(\bar{u}_0)-f_n(u(t_m))=f(\bar{u}_0)-f_n(u(t_m))$, and hence
  \begin{equation}
    \begin{aligned}
      \|\partial_t^2 w_n(t_m)\| & \le \|f_n(\bar{u}_0)-f_n(u(t_m))\|+\|f(\bar{u}_0)-f_n(\bar{u}_0)\| \\
                                & \le C_1n^2\|\bar{u}_0-u(t_m)\|_6+C_2.
    \end{aligned}
  \end{equation}
  Because $\|\bar{u}_0-u(t_m)\|_6\to 0$ as $m\to \infty$,   for any $n\in \mathbb{N}$ there exists $M_n\in \mathbb{N}$ such that $n^2\|\bar{u}_0-u(t_m)\|_6\le 1$ for all $m>M_n$, and therefore
  \begin{equation}\label{5.22}
    \|\partial_t^2 w_n(t_m)\|\le C_1+C_2, ~~ \forall m>M_n.
  \end{equation}
  As $u(t)$ is a global-in-time solution,  the dissipativity relation follows from the energy estimate \eqref{weak_energy_est}:
  $$\int_{-\infty}^{+\infty}\int_{\Omega}g(u_{t})u_{t}\td x\td t\le Q(\|\mathscr{A}\|_{\H}),$$
  which  along with \eqref{gs_gamma} leads to
  \begin{equation}\label{5.21}
    \int^{+\infty}_{-\infty}\|u_{t}(s)\|^{2}\td s\le \frac{1}{\gamma}\int_{-\infty}^{+\infty}\int_{\Omega}g(u_{t})u_{t}\td x\td t\le Q(\|\mathscr{A}\|_{\H}).
  \end{equation}
  Consequently, for any $n \in \mathbb{N}$ there exists $T_n<t_{M_n}<0$ such that
  \begin{equation}\label{5.24}
    n^{8}\int_{-\infty}^{T_n}\|u_{t}(s)\|^{2}\td s\le 1.
  \end{equation}
  Substituting \eqref{5.22} and \eqref{5.24} into \eqref{5.19},  it follows that
  \begin{equation*}\label{}
    \|\partial_t^2 w_n(s)\|+\|\nabla \partial_t w_{n}(s)\| \leq r_{1},~~\text{for any  } t_m\leq s\leq T_n,
  \end{equation*}
  which together with the equation \eqref{wn} yields
  \begin{equation}\label{5.26}
    \|\nabla \partial_{t} w_n(t)\|+\|\nabla w_{n}(t)\| \leq r_2,~~\text{for any  } t_m\leq t\leq T_n,
  \end{equation}
  where the positive constants $r_1$ and $r_2$ are independent of $m$, $n$ and $(u_0, u_1)$.\\
  (ii) {\it Dissipativity relation.} \quad  By multiplying both sides of \eqref{wn} by $\partial_{t} w_n$ and
  integrating over $(t_1 ,t_2) \times \Omega$, it yields
  \begin{equation}\label{new:30}
    \int_{t_1}^{t_2}\int_\Omega g(\partial_{t} w_n(t))\partial_{t} w_n(t)\td x\td t \leq C_{2}+\int_{t_1}^{t_2}\int_\Omega f_{n}^{\prime}(u(t))u_{t}(t) w_{n}(t) \td x\td t.
  \end{equation}
  Taking into accounts the estimates \eqref{5.26} and \eqref{5.24} it follows from \eqref{new:30} that
  \begin{equation}\label{diss}
    \begin{aligned}
      \int_{t_1}^{t_{2}}\int_\Omega g(\partial_{t} w_n(t))\partial_{t} w_n(t)\td x\td t & \le C_{2}+\|f_n^{\prime}(u(t))\|_\infty \|u_t(t)\|\cdot\|w_{n}(t)\|                               \\
                                                                                        & \le C_{2}+C_{3}n^4\int_{t_1}^{t_2}\|u_{t}(t)\|\td t                                               \\
                                                                                        & \le  C_{2}+C_{3}n^4\left(\int_{t_1}^{t_2}\|u_{t}(t)\|^2\td t\right)^{\frac12} (t_2-t_1)^{\frac12} \\
                                                                                        & \le  C_{4}(1+(t_2-t_1)^{\frac12}),~~ t_m\leq t_1\leq t_2\leq T_{n}.
    \end{aligned}
  \end{equation}
  (iii) {\it $H^2(\Omega)$ estimate of $w_{n}(t)$.} \quad  Multiplying both sides of equation \eqref{wn} by $- \Delta \partial_{t} w_n-\al\Delta w_n$, where $\al \in (0, 1)$, and integrating over $\Omega$ gives
  \begin{equation}\label{Delta_w}
    \begin{aligned}
       & \frac{\td}{\td t}\Phi_{n}(t) +\al\Phi_{n}(t)+\int_\Omega g^\prime(\partial_{t} w_n)|\nabla \partial_{t} w_n|^2\td x+\frac\al2\|\Delta w_{n}\|^2-\frac{3\al}{2}\|\nabla \partial_{t} w_{n}\|^{2}  \\
       & -\al^2\int_\Omega\nabla\partial_{t} w_{n}\nabla w_{n}\td x+\al\int_\Omega g^\prime (\partial_{t} w_n)\nabla w_{n}\nabla \partial_{t} w_n\td x-\int_\Omega f^\prime_{n}(u)u_t\Delta w_{n}\td x=0.
    \end{aligned}
  \end{equation}
  in which
  \begin{equation*}
    \Phi_{n}(t) : ={\frac{1}{2}}\|\nabla \partial_t w_{n}(t)\|^{2}+{\frac{1}{2}}\|\Delta w_{n}\|^{2}+\al\int_\Omega\nabla \partial_{t}w_{n}\nabla w_{n}\td x+\int_\Omega f^\prime_{n}(u)\nabla u\nabla w_{n}\td x+\la f(\bar{u}_0),\Delta w_{n}\ra .
  \end{equation*}

  Regarding the fourth term of $\Phi_n(t)$, one has
  \begin{equation}
    \begin{aligned}
      \int_\Omega f^\prime_{n}(u)\nabla u\nabla w_{n}\td x & =\int_\Omega f^\prime_{n}(u)\nabla v_n\nabla w_{n}\td x+\int_\Omega f^\prime_{n}(u)|\nabla w_n|^2\td x.
    \end{aligned}
  \end{equation}
  Notice that $f_n^\prime(s)=f^\prime(s)$ on $[-n, n]$ and $=0$ otherwise, it follows that $f_n^\prime(s)\ge -\omega s^4-K_\omega$. Therefore taking  into account \eqref{5.26},  one obtains
  \begin{equation}
    \int_\Omega f^\prime_{n}(u)\nabla u\nabla w_{n}\td x \ge -\|f^\prime_n(u)\|_3\|\nabla v_n\|\cdot\|\nabla w_n\|_6-\omega\|u\|_6^4\|\nabla w_n\|_6^2-K_\omega\|\nabla w_n\|^2.
  \end{equation}
  Now choosing $\omega>0$ small enough, one gets
  \begin{equation}\label{f2}
    \int_\Omega f^\prime_{n}(u)\nabla u\nabla w_{n}\td x \ge -C_5n^{4}\|\nabla v_n\|^2-\frac18\|\Delta w_n\|^2-C_6,
  \end{equation}
  which, along with \eqref{5.26} implies that
  \begin{equation}\label{5.34}
    \Phi_{n}(t)\ge \frac14\left(\|\nabla \partial_t w_{n}(t)\|^{2}+\|\Delta w_{n}(t)\|^{2}\right)-C_5n^{4}\|\nabla v_n\|^2-C_7,~~ t_m\leq t\leq T_{n},
  \end{equation}
  we just need the lower bound estimate of $\Phi_n(t)$ since $\Phi_n(t_m)=0$ .

  Regarding the first term on the last line of \eqref{Delta_w}, repeating the procedure from \eqref{g'u_tu} to \eqref{5.6} gives
  \begin{equation}\label{new:31}
    \begin{aligned}
      \int_\Omega g^\prime (\partial_{t} w_n)|\nabla w_{n}\nabla \partial_{t} w_n|\td x & \le \frac12\int_\Omega g^\prime (\partial_{t} w_n)|\nabla \partial_{t} w_n|^2\td x+C_6\|\nabla w_{n}\|^2 \\
                                                                                        & + \frac14\|\Delta w_{n}\|^2+C\int_\Omega g(\partial_{t} w_n)\partial_{t} w_n\td x \|\Delta w_{n}\|^2.
    \end{aligned}
  \end{equation}
  Using inequality \eqref{5.34} one has
  \begin{equation}\label{5.36}
    \|\Delta w_{n}(t)\|^{2} \le 4\Phi_{n}(t) +4C_5n^{4}\|\nabla v_n(t)\|^2+4C_7,~~ t_m\leq t \leq T_{n}.
  \end{equation}
  Inserting \eqref{5.36} in \eqref{new:31} yields
  \begin{equation}
    \begin{aligned}
      \int_\Omega g^\prime (\partial_{t} w_n)|\nabla w_{n}\nabla \partial_{t} w_n|\td x\le & \frac12\int_\Omega g^\prime (\partial_{t} w_n)|\nabla \partial_{t} w_n|^2\td x+C_6\|\nabla u\|^2+\frac14\|\Delta w_{n}\|^2 \\
      +                                                                                    & C_8\int_\Omega g(\partial_{t} w_n)\partial_{t} w_n\td x\left(\Phi_{n}(t)+C_5n^{4}\|\nabla v_n\|^2-C_7.\right).
    \end{aligned}
  \end{equation}
  For the second term on the last line of \eqref{Delta_w}, one has
  \begin{equation}\label{5.38}
    \int_\Omega f^\prime_{n}(u)u_t\Delta w_{n}\td x\le \frac\al4\|\Delta w_{n}\|^2+C_\al\|f^\prime_{n}(u)u_t\|^2\le \frac\al4\|\Delta w_{n}\|^2+C_\al n^8\|u_t\|^2.
  \end{equation}
  Choosing $\al$ to be sufficiently small in \eqref{5.38}, and combining with \eqref{Delta_w}, \eqref{f2} and $g^\prime(s)\ge \gamma$ results in
  \begin{equation*}\label{}
    \begin{aligned}
      \frac{\td}{\td t} \Phi_{n}(t)+ \al \Phi_{n}(t)\le & C_8\int_\Omega g(\partial_{t} w_n)\partial_{t} w_n\td x\left(\Phi_{n}(t)+C_5n^{4}\|\nabla v_n\|^2+C_7\right)
      +C_\al n^8\|u_t\|^2.
    \end{aligned}
  \end{equation*}
  Taking into account \eqref{5.24} and \eqref{diss}, it can be derived from Gronwall's Lemma that
  \begin{equation}\label{5.40}
    \Phi_{n}(t)\le C_9n^{4}\sup_{t_m\le s\le T_n}\|\nabla v_n\|^2+C_9,~~ t_m\leq t \leq T_{n}.
  \end{equation}
  The inequality \eqref{5.40} combined with \eqref{5.34} and \eqref{5.40} gives the desired inequality \eqref{wH2}, and completes the proof.
\end{proof}

\begin{lemma}\label{lem5.6}
  Under the same assumptions as in Lemma \ref{unif_strogn_est},  there exists $n_0 \in \mathbb{N}$ such that
  \begin{equation}\label{5.41}
    \lim_{m\to \infty}\sup_{t_m\le t\le T_{n_0}}\left(\|v_{n_0t}(t)\|+\|\nabla v_{n_0}(t)\|\right)=0.
  \end{equation}
\end{lemma}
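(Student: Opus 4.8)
The plan is to derive an energy estimate for $v_n$ from \eqref{vn}, using that $u_t=\partial_t w_n+\partial_t v_n$ and $u=v_n+w_n+\bar u_0$. Write $h_n:=f-f_n$, which vanishes on $\{|s|<n\}$ and satisfies $|h_n(s)|\le C|s|^{p-1}$, $|H_n(s)|\le C|s|^{p}$ on $\{|s|\ge n\}$, where $H_n$ is the primitive of $h_n$. Then \eqref{vn} reads $\partial_t^2 v_n+(g(\partial_t w_n+\partial_t v_n)-g(\partial_t w_n))-\Delta v_n+h_n(u)=0$, and I would test it with $\partial_t v_n+\al v_n$ for a small $\al>0$, using the identity $\int_\Omega h_n(u)\partial_t v_n\,\td x=\tfrac{\td}{\td t}\int_\Omega H_n(u)\,\td x-\int_\Omega h_n(u)\partial_t w_n\,\td x$ to move the principal part of the super-cubic source into a perturbed energy
\[\mathcal E_\al(v_n)=\tfrac12\|\partial_t v_n\|^2+\tfrac12\|\nabla v_n\|^2+\al\langle v_n,\partial_t v_n\rangle+\int_\Omega H_n(u)\,\td x,\]
which for $\al$ small obeys $\mathcal E_\al(v_n)\ge\tfrac14(\|\partial_t v_n\|^2+\|\nabla v_n\|^2)-C\eta_1(n)$, where $\eta_1(n):=\sup_t\int_{\{|u(t)|\ge n\}}|u(t)|^{p}\,\td x\to0$ as $n\to\infty$ (uniformly along the $\H$-bounded trajectory, since $p<6$).

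Then I would estimate the remaining terms with all constants independent of $m$ and $n$. The damping is favourable: monotonicity of $g$ gives $\int_\Omega(g(u_t)-g(\partial_t w_n))\partial_t v_n\,\td x\ge0$, and the cross term $\al\int_\Omega(g(u_t)-g(\partial_t w_n))v_n\,\td x$ is split by H\"older and \eqref{g_delta} into a coefficient-times-energy piece — the coefficient controlled by $\int_\Omega g(u_t)u_t\,\td x$ (globally time-integrable by \eqref{weak_energy_est}, \eqref{5.21}) and by $\int_\Omega g(\partial_t w_n)\partial_t w_n\,\td x$ (controlled on time windows by \eqref{diss}) — plus a lower-order remainder, using throughout $\|\partial_t w_n\|_6\le C$ from \eqref{5.26} and $\|\nabla u_t\|\le Q(\|\mathscr{A}\|_{\H})$ from \eqref{part_reg}. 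The genuine source $\int_\Omega h_n(u)\partial_t w_n\,\td x$ is bounded by $\|h_n(u)\|_{6/5}\|\partial_t w_n\|_6\le C\eta_2(n)$ with $\eta_2(n):=\sup_t\|f(u(t))-f_n(u(t))\|_{6/5}\to0$, since $|\{|u(t)|\ge n\}|\le n^{-6}\|u(t)\|_6^6$ and $u$ is bounded in $L^6(\Omega)$. Collecting these gives $\tfrac{\td}{\td t}\mathcal E_\al(v_n)+\beta\mathcal E_\al(v_n)\le h_1(t)\mathcal E_\al(v_n)+h_2(t)$ for some $\beta>0$, with $h_1$ satisfying the hypothesis of Lemma \ref{gronwall} and $\sup_t\int_t^{t+1}|h_2|\le C\eta_2(n)$.

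Finally, applying Lemma \ref{gronwall} on $[t_m,t]$ yields $\mathcal E_\al(v_n(t))\le\mu|\mathcal E_\al(v_n(t_m))|e^{-\beta(t-t_m)}+\rho$ with $\rho\le C\eta_2(n)/\beta$. Since $z(t_m)=(u(t_m),u_t(t_m))\to z_0=(\bar u_0,0)$ in $\H$, one has $v_n(t_m)=u(t_m)-\bar u_0\to0$ in $H^1_0(\Omega)$ and $\partial_t v_n(t_m)=u_t(t_m)\to0$ in $L^2(\Omega)$; moreover, taking $n$ larger than $\|\bar u_0\|_\infty$ (finite since equilibria belong to $H^2(\Omega)\hookrightarrow L^\infty(\Omega)$), also $\int_\Omega H_n(u(t_m))\,\td x\to0$ by uniform integrability, so $\mathcal E_\al(v_n(t_m))\to0$ as $m\to\infty$. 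Hence $\limsup_{m\to\infty}\sup_{t_m\le t\le T_n}(\|\partial_t v_n(t)\|^2+\|\nabla v_n(t)\|^2)\le C(\rho+\eta_1(n))$, a quantity that tends to $0$ as $n\to\infty$; fixing $n_0$ large enough (and then letting $\al,\delta\to0$ in the estimates) gives \eqref{5.41}. The main obstacle is the super-cubic term $f(u)-f_n(u)$: being controlled only in $L^{6/5}$, it must be tested against $\partial_t v_n$, $v_n$ and $\partial_t w_n$ in $L^6$, which makes essential use of the uniform $H^1$-regularity of $w_n$ from Lemma \ref{lem5.5}; a secondary difficulty is keeping every constant independent of both $m$ and $n$ so that one may first send $m\to\infty$ and only then choose $n_0$.
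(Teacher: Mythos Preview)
Your energy setup, the monotonicity treatment of the damping, and the primitive trick for $h_n(u)\partial_t v_n$ are all reasonable, but the argument does not close at the end. After Gronwall you obtain, for each fixed $n$,
\[
\limsup_{m\to\infty}\ \sup_{t_m\le t\le T_n}\bigl(\|\partial_t v_n(t)\|^2+\|\nabla v_n(t)\|^2\bigr)\ \le\ C\bigl(\rho+\eta_1(n)\bigr),
\]
and the right-hand side is in general a strictly positive number. The lemma requires this limit to be \emph{exactly} zero for some $n_0$; ``fixing $n_0$ large'' makes $C(\rho+\eta_1(n_0))$ small, not zero. Sending $\al,\delta\to0$ does not help either: your decay rate $\beta$ is proportional to $\al$, so in Lemma~\ref{gronwall} one has $\rho\sim b/(e^{\beta}-1)\to\infty$ as $\al\to0$, while $\eta_1(n)$ is independent of $\al$ and $\delta$ altogether. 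The culprit is the forcing $\int_\Omega h_n(u)\,\partial_t w_n\,\td x$, which you bound by the constant $C\eta_2(n)$; any such constant forcing leaves a nonvanishing remainder in Gronwall regardless of how the parameters are tuned. (A side remark: $|H_n(s)|\le C|s|^{p+1}$, not $C|s|^{p}$, though this does not affect $\eta_1(n)\to0$ since $p+1\le 6$.)

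What is missing is a bootstrap that converts the source $\|f(u)-f_n(u)\|_{6/5}$ into a term proportional to $\|\nabla v_n\|^2$ with a small coefficient, so that it can be \emph{absorbed} by the dissipation rather than merely estimated. The paper does this in two passes. A first rough Gronwall estimate is fed into Lemma~\ref{lem5.5}, and Agmon's inequality then gives $\|w_n(t)\|_\infty\le n/3+C$ on $[t_m,T_n]$. Consequently, on $\{|u|\ge n\}$ one has $|u|\le C|v_n|$ for $n$ large (using also $\bar u_0\in L^\infty$), whence
\[
\|f(u)-f_n(u)\|_{6/5}\ \le\ C\Bigl(\int_{\{|u|\ge n\}}|u|^6\,\td x\Bigr)^{1/2}\|\nabla v_n\|^2.
\]
By compactness of $\mathscr{A}$ in $\H$ the prefactor can be made $\le \al/(8C)$ for a suitable $n_0$; substituting this back into the energy inequality produces a \emph{homogeneous} differential inequality with no forcing, and Gronwall then yields pure exponential decay from the initial data $(v_{n_0}(t_m),\partial_t v_{n_0}(t_m))$, which do tend to zero as $m\to\infty$. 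Without this absorption step the conclusion \eqref{5.41} cannot be reached.
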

\begin{proof}
  By multiplying both sides of equation \eqref{vn} by $\partial_{t} v_n + \al v_n$, where $\al \in (0, 1 ) $, and integrating over $\Omega$, the following equation is obtained:
  \begin{equation}\label{5.42}
    \begin{aligned}
       & {\frac{\td}{\td t}}\left(\frac12\|\partial_{t} v_n(t)\|^2+\frac12\|\nabla v_{n}(t)\|^2+\al \langle \partial_{t} v_n(t),v_{n}(t)\rangle\right)+\al \|\nabla v_{n}(t)\|^{2}-\al\| \partial_{t} v_n(t)\|^{2} \\
       & +\int_\Omega \left(g(u_t(t))-g(\partial_{t} w_n(t))\right)(\partial_{t} v_n(t)+\al v_{n}(t))\td x                                                                                                         \\
       & \leq  \|f(u(t))-f_n(u(t))\|_{6/5}\|\partial_{t} v_n(t)\|_{6}+\al \|f(u(t))-f_n(u(t))\|_{6/5}\|v_n(t)\|_6.
    \end{aligned}
  \end{equation}
  By Lemma \ref{lem:pr} and estimate \eqref{5.26}, one has uniform boundedness estimate
  $$\|\partial_{t} v_n(t)\|_{6}\le C,~~ t_m\leq t \leq T_{n}.$$
  Then for the last two terms in \eqref{5.42}, using Sobolev's embedding theorem gives
  \begin{equation*}
    \begin{aligned}
          & \|f(u(t))-f_n(u(t))\|_{6/5}\|\partial_{t} v_n(t)\|_{6}+\al \|f(u(t))-f_n(u(t))\|_{6/5}\|v_n(t)\|_6 \\
      \le & C\|f(u(t))-f_n(u(t))\|_{6/5}+ C\|f(u(t))-f_n(u(t))\|^2_{6/5}+\al^2\|\nabla v_n(t)\|^2              \\
      =   & \left(C+ C\|f(u(t))-f_n(u(t))\|_{6/5}\right)\|f(u(t))-f_n(u(t))\|_{6/5}+\al^2\|\nabla v_n(t)\|^2   \\
      \le & C_1\|f(u(t))-f_n(u(t))\|_{6/5}+\al^2\|\nabla v_n(t)\|^2.
    \end{aligned}
  \end{equation*}
  To estimate the fourth term on the left side of \eqref{5.42},  denote
  $$g_1(u_t,\partial_{t} w_n) =\int_0^1 g^\prime(su_t+(1-s)\partial_{t} w_n)\td s ~~\text{and~~} g_2(u_t,\partial_{t} w_n)=g(u_t)u_t+g(\partial_{t} w_n)\partial_{t} w_n.$$
  It then follows that
  $$0<\gamma \le g_1(u_t,\partial_{t} w_n) \le C[1+g_2(u_t,\partial_{t} w_n)]^{2/3},$$
  and consequently
  \begin{equation*}
    \begin{aligned}
           & \int_\Omega \left(g(u_t(t))-g(\partial_{t} w_n(t))\right)\partial_{t} v_n(t) \td x +\al\int_\Omega \left(g(u_t(t))-g(\partial_{t} w_n(t))\right)  v_{n}(t) \td x             \\
      =    & \int_\Omega g_1(u_t,\partial_{t} w_n)|\partial_{t} v_n(t)|^2 \td x + \al \int_\Omega g_1(u_t,\partial_{t} w_n) \partial_{t} v_n(t) v_{n}(t) \td x                            \\
      \ge  & \frac12\int_\Omega g_1(u_t,\partial_{t} w_n)|\partial_{t} v_n(t)|^2 \td x -\frac{\al^2}{2}\int_\Omega g_1(u_t,\partial_{t} w_n) |v_{n}(t)|^2 \td x                           \\
      \geq & \frac{\gamma}{2}\|\partial_{t} v_n(t)\|^2-\left(\frac{\al^2}{4}+\al^2C_2\int_\Omega g_2(u_t,\partial_{t} w_n)\td x\right) \|\nabla v_n(t)\|^2-\frac{1}{2}\al^2C\|v_n(t)\|^2.
    \end{aligned}
  \end{equation*}
  Thus,  taking this inequality  into account  in \eqref{5.42} and choosing $\al$ small enough yields
  \begin{equation}\label{5.44}
    \begin{aligned}
           & {\frac{\td}{\td t}}\left(\frac12\|\partial_{t} v_n(t)\|^2+\frac12\|\nabla v_{n}(t)\|^2+\al \langle \partial_{t} v_n(t),v_{n}(t)\rangle\right)+\frac\al4 \|\nabla v_{n}(t)\|^{2}+\frac\al4\| \partial_{t} v_n(t)\|^{2} \\
      \leq & C_2\al^2 \int_\Omega g_2(u_t,\partial_{t} w_n)\td x \|\nabla v_n(t)\|^2 +C_1\|f(u(t))-f_n(u(t))\|_{6/5}.
    \end{aligned}
  \end{equation}
  Since  $\int_{-\infty}^{+\infty}\int_{\Omega}g(u_{t})u_{t}\td x\td t\le Q(\|\mathscr{A}\|_{\H})$ and \eqref{diss} hold, it follows that
  \begin{equation*}
    \int_{t_1}^{t_{2}}\int_\Omega g_2(u_t,\partial_{t} w_n)\td x \td t \le  C_{3}(1+(t_2-t_1)^{\frac12}),~~ t_m\leq t_1\leq t_2\leq T_{n}.
  \end{equation*}
  Applying the Gronwall's Lemma to \eqref{5.44} gives
  \begin{equation*}
    \begin{aligned}
      \|\partial_{t} v_n(t)\|^2+\|\nabla v_{n}(t)\|^2 & \le C_4\left(\|\partial_{t} v_n(t_m)\|^2+\|\nabla v_{n}(t_m)\|^2\right)e^{-\al(t-t_m)/4} \\
                                                      & +C_4\sup_{t_m\le s\le T_n}\|f(u(s))-f_n(u(s))\|_{6/5}.
    \end{aligned}
  \end{equation*}

  Recalling that initial value $(v_n(t_m), \partial_{t} v_n(t_m))=(u(t_m)-\bar{u}_0, u_t(t_m)) \to (0, 0)$ in $\H$ as $m\to \infty$,  there exists $M^\prime_n\in \mathbb{N}$ such that $M^\prime_n>M_n$ and
  \begin{equation*}
    \|\partial_{t} v_n(t_m)\|^2+\|\nabla v_{n}(t_m)\|^2\le \sup_{t_m\le s\le T_n}\|f(u(s))-f_n(u(s))\|_{6/5}, ~~ \text{for all }m>M^\prime_n.
  \end{equation*}
  Therefore
  \begin{equation*}
    \|\partial_{t} v_n(t)\|^2+\|\nabla v_{n}(t)\|^2 \le 2C_4 \sup_{t_m\le s\le T_n}\|f(u(s))-f_n(u(s))\|_{6/5},
  \end{equation*}
  for any $m>M^\prime_n$ and $t\ge t_m$.

  Combining this with \eqref{wH2} and Agmon inequality $\|w_n\|_{\infty}^2\le c\|\nabla w_n\|\cdot\|\Delta w_n\|$, it follows that
  \begin{equation*}\label{}
    \|w_n\|_{\infty}\le  Cn\sup_{t_m\le s\le T_n}\|\nabla v_n(s)\|^{1/2}+C\le C_5 n\sup_{t_m\le s\le T_n}\|f(u(s))-f_n(u(s))\|^{1/4}_{6/5}+C_5.
  \end{equation*}
  Now denote $A_n(t)=\{x\in \Omega : |u(t,x)|\ge n\}$, we obtain
  \begin{equation}\label{5.51}
    \|f(u(t))-f_n(u(t))\|_{6/5} \leq \left(C_6\int_{A_{n}(t)} |u(t,x)|^{6} \td x\right)^{5/6}, ~~\forall t\in \R.
  \end{equation}
  Since $\mathscr{A}$ is a compact subset of $\H$ and $(u(t), u_t(t)) \in \mathscr{A}$, there exists $n_1 \in \mathbb{N}$ such that
  $$\sup_{t\in \R}\int_{A_{n_1}(t)} |u(t,x)|^6 \td x\leq \frac{1}{C_6\left({3C_5}\right)^{24/5}},$$
  and hence
  \begin{equation}\label{5.52}
    \|w_n(t)\|_{\infty}\le  \frac n3+C_5,~~\forall t_m\leq t\leq T_{n_1}.
  \end{equation}

  Let $\hat{A}(t)=\{x\in \Omega : |w_n(t)|\ge |v_n(t)|\}$, from \eqref{5.52} it is known that $A_n(t)\subset \hat{A}_n(t)$ for $n\ge n_1+6C_5$. Then refine the estimate \eqref{5.51} as follows
  \begin{equation}\label{5.49}
    \begin{aligned}
      \|f(u(t))-f_n(u(t))\|_{6/5} & \leq \left(C_6\int_{A_{n}(t)}|u(t)|^{6}\td x\right)^{\frac56}                                                         \\
                                  & \leq C\left(\int_{A_n(t)} |u(t)|^6 \td x\right)^{\frac12}\left(\int_{\hat{A}_n(t)}|u(t)|^6\td x\right)^{\frac13}      \\
                                  & \leq C\left(\int_{A_{n}(t)}|u(t)|^{6}\td x\right)^{\frac12} \left(\int_{\hat{A}_n(t)}|v_n(t)|^6\td x\right)^{\frac13} \\
                                  & \leq C_7\left(\int_{A_{n}(t)}|u(t)|^{6}\td x\right)^{\frac12}\|\nabla v_n(t)\|^2, ~~\forall t_m\leq t\leq T_n.
    \end{aligned}
  \end{equation}
  By the compactness of $\mathscr{A}$ again, there exists  $n_0 \in \mathbb{N}$ which is greater than $n_1+6C_5$ such that
  \begin{equation}\label{5.50}
    \sup_{t\in \R}\int_{A_{n_0}(t)}|u(t)|^6\td x\leq\left(\frac{\al}{8C_1C_7}\right)^{2}.
  \end{equation}
  Substituting  inequalities \eqref{5.49} and \eqref{5.50} into \eqref{5.44} gives
  \begin{equation}\label{new:50}
    \begin{aligned}
       & {\frac{\td}{\td t}}\left(\frac12\|\partial_{t} v_{n_0}(t)\|^2+\frac12\|\nabla v_{n_0}(t)\|^2+\al \langle \partial_{t} v_{n_0}(t),v_{n_0}(t)\rangle\right)+\frac\al8 \|\nabla v_{n_0}(t)\|^{2} \\
       & +\frac\al8\| \partial_{t} v_{n_0}(t)\|^{2} \leq C_2\al^2 \int_\Omega g_2(u_t,\partial_{t} w_{n_0})\td x \|\nabla v_{n_0}(t)\|^2, ~~ t_m\le t\le T_{n_0}.
    \end{aligned}
  \end{equation}

  Applying Gronwall's Lemma to \eqref{new:50} results in the exponential decay estimate of energy
  \begin{equation}\label{new:51}
    \|\partial_{t} v_{n_0}(t)\|^2+\|\nabla v_{n_0}(t)\|^2 \le C\left(\|\partial_{t} v_{n_0}(t_m)\|^2+\|\nabla v_{n_0}(t_m)\|^2\right)e^{-\frac{\al (t-t_m)}{8}},
  \end{equation}
  that holds on $[t_m, T_{n_0}]$. Finally, \eqref{5.41} follows directly from letting $m\to \infty$ in \eqref{new:51}. The proof is complete.
\end{proof}

By Lemmas \ref{lem5.5} and \ref{lem5.6},  $(u(T_{n_0}), u_t(T_{n_0})) \in \V$ and
\begin{equation*}
  \|(u(T_{n_0}), u_t(T_{n_0}))\|_{\V}\le r_0.
\end{equation*}
Since $u(t)$ satisfies problem \eqref{eq1} on $(T_{n_0}, \infty) \times \Omega$ with initial value $(u(T_{n_0}), u_t(T_{n_0}))$, applying Lemma \ref{unif_strogn_est}  gives $\|(u_0, u_1)\|_{\V} \le Q(r_0+\|\bar{u}_0\|_{H^2(\Omega)})$. Recall that all equilibrium points of $S(t)$ are uniformly bounded in $\V$,  $\mathscr{A}$ is a bounded subset of $\V$. Additionally, the bound of $\mathscr{A}$ can be refined using \eqref{Phi}, and the following theorem hold.

\begin{thm}[Regularity of the global attractor]\label{regularity}
  Assume that $\phi\in L^2(\Omega)$ and updated Assumption ({\bf G}) with conditions \eqref{G1}, \eqref{G2} and Assumption ({\bf F}) with conditions \eqref{F1}-\eqref{F3} hold. Then the global attractor $\mr{A}$ of problem \eqref{eq1} is bounded in $\V$.
\end{thm}
\begin{remark}
  The strategy in Section 4 and Section 5 also holds in the higher dimensions case. In the case of dimension $N\ge 4$, under the same assumptions declared in Remark \ref{re2} and condition \eqref{G2}, the solution semigroup $S(t)$ also possesses a global attractor which is bounded in $\V$.
\end{remark}
\section{Summary}\label{sec:sum}
In this paper, we investigated the well-posedness, global attractors, and regularity properties of weak solutions for a wave equation with nonlinear damping and super-cubic nonlinearity. By employing a priori estimates and developing a series of lemmas and theorems, we demonstrated the existence and uniqueness of weak solutions. We then established the existence of global attractors for the semigroup associated with the equation, and showed that these attractors are bounded in the appropriate function space. Further analysis provided insight into the regularity of the global attractors under specific conditions. This work contributes to the understanding of the long-term behavior and stability of solutions to complex wave equations with nonlinear characteristics.










\end{document}